\theoremstyle{plain}
\theoremstyle{definition}
\newtheorem{theorem}{Theorem}[section]
\newtheorem{lemma}[theorem]{Lemma}
\newtheorem{proposition}[theorem]{Proposition}
\newtheorem{corollary}[theorem]{Corollary}
\newtheorem{definition}[theorem]{Definition}
\newtheorem{remark}[theorem]{Remark}
\newtheorem{notation}[theorem]{Notation}
\newtheorem*{claim}{Claim}
\newtheorem*{claim1}{Claim 1}
\newtheorem*{claim2}{Claim 2}
\newcommand{\mc}[1]{{\mathcal #1}}
\newcommand{\seq}[2]{\ensuremath{({#1}_{#2})_{#2\in\mathbb{N}}}}
 \DeclareMathOperator{\supp}{supp}
\DeclareMathOperator{\spann}{span}
 \DeclareMathOperator{\ran}{ran}
\DeclareMathOperator{\sgn}{sgn} 
\DeclareMathOperator{\diag}{diag}
\newcommand{\R}{\mathbb{R}}
\newcommand{\N}{\mathbb{N}}
\newcommand{\Q}{\mathbb{Q}}
\newcommand{\C}{\mathbb{C}}
\newcommand{\HH}{\mathbb{H}}
\newcommand{\e}{\varepsilon}
\newcommand{\eqs}{{\mathfrak X}}
\theoremstyle{plain}
\begin{document}

\title{Strictly singular non-compact diagonal operators on HI spaces}

\author{Spiros A. Argyros}
\address[S.A. Argyros]{Department of Mathematics,
National Technical University of Athens}
\email{sargyros@math.ntua.gr}

\author{Irene  Deliyanni}
\address[I. Deliyanni]{Neapoleos 18,  15341, Athens, Greece}
 \email{ideliyanni@yahoo.gr}

\author{Andreas G. Tolias}
\address[A. Tolias]{Department of Mathematics,
 University of the Aegean}
\email{atolias@math.aegean.gr}

 \begin{abstract}
 We construct a Hereditarily Indecomposable
 Banach space $\eqs_d$ with a Schauder basis \seq{e}{n}
 on which there exist strictly singular non-compact diagonal operators.
 Moreover, the space
 $\mc{L}_{\diag}(\eqs_d)$ of diagonal operators with respect
 to the basis \seq{e}{n} contains an isomorphic copy of $\ell_{\infty}(\N)$.
\end{abstract}


       \keywords{ Hereditarily Indecomposable Banach
space, diagonal operator, strictly singular operator, compact
operator} \subjclass[2000]{46B28, 46B20, 46B03}

\maketitle

\tableofcontents

 \section{Introduction}
 In the present paper we study the structure of the diagonal operators on Hereditarily
 Indecomposable spaces
 having a Schauder basis. The class of Hereditarily Indecomposable (HI) Banach spaces
 was
 introduced in the early 90's by W.T. Gowers and B. Maurey \cite{GM1}
 and led to the solution of many long standing open problems in  Banach space theory.
 Since then the class of HI Banach spaces,
 as well as the spaces
 of  bounded linear  operators acting  on them have been studied
 extensively.

 We begin by recalling  that
 an infinite dimensional Banach space
 $X$ is HI provided no closed  subspace $Y$ of $X$
 is of the form $Y=Z\oplus W$ with both $Z, W$ being of infinite
 dimension.
 For a Banach space $X$
 we shall use $\mc{L}(X)$ to denote the space  of bounded linear operators  $T:X\to X$,
 while the notation  $\mc{S}(X)$, $\mc{K}(X)$
 will stand for the ideals of  strictly singular
 and  compact operators on $X$ respectively.
   As was shown by  Gowers and  Maurey (\cite{GM1}),
  for a complex HI space $X$,
    every
  $T\in \mc{L}(X)$ takes the form $T=\lambda I+S$
  with $\lambda\in\C$ and $S\in \mc{S}(X)$ (by $I$ we shall always denote the identity operator).
  However, it is not true in general,  that each $T\in\mc{L}(X)$,
  for  a
  real HI Banach space $X$,
   can be written as
  $T=\lambda I+S$ with $\lambda\in \R$ and $S\in S(X)$;
 although     this happens   for the
  space $X_{GM}$ of Gowers and Maurey \cite{GM1}
  and for the asymptotic $\ell_1$ HI space $X_{AD}$ constructed by Argyros and Deliyanni \cite{AD}
  (for a proof see e.g. \cite{AT1}).
  V. Ferenczi proved (\cite{Fe1}) that for every real HI space $X$, the quotient space $\mc{L}(X)/\mc{S}(X)$,
  is a division algebra isomorphic to  $\R$, to $\C$,  or to the quaternionic  algebra
  $\HH$.
  V. Ferenczi \cite{Fe3} has presented two real HI Banach spaces
     $X_{\C}$ and $X_{\HH}$ with $\mc{L}(X_{\C})/\mc{S}(X_{\C})$  isomorphic
  to $\C$ and $\mc{L}(X_{\HH})/\mc{S}(X_{\HH})$  isomorphic
  to $\HH$.
  A variety of spaces $X$ with a prescribed algebra
   $\mc{L}(X)/\mc{S}(X)$  were provided by  Gowers and  Maurey in \cite{GM2}.
   Although these spaces $X$
   are not HI, they do not contain any unconditional basic sequence, hence,
   Gowers' dichotomy (\cite{G2}, \cite{G3}) yields  that they
  are HI saturated.
  Argyros and Manoussakis \cite{AM}, provided  an unconditionally
  saturated Banach  space $X$ with the property that every  $T\in\mc{L}(X)$
  is of the form $T=\lambda I+S$ with $S\in \mc{S}(X)$.

  The problem of the existence of strictly singular non-compact
  operators on HI spaces has been studied by several authors.
   The first result in this
   direction, due to
     Gowers (\cite{G1}), is an operator $T:Y\to X_{GM}$, for some subspace
   $Y$ of the Gowers-Maurey space $X_{GM}$, such that $T$ is not of the form $T=\lambda i_{Y,X}+K$
   with $K$ compact, where $i_{Y,X}$ is the canonical injection from $Y$ into $X$.
   Several extensions of the above result have been given in \cite{AnOdScTo},
   \cite{AnSa} and \cite{Sc2}.

     Argyros and Felouzis (\cite{AF}) using interpolation methods, provided
   examples of HI spaces on which there do exist strictly singular non-compact
   operators.
      G. Androulakis and Th. Schlumprecht \cite{AnSc} constructed a strictly singular
   non-compact operator    $T:X_{GM}\to X_{GM}$, while
   G. Gasparis \cite{Ga}, constructed strictly singular non-compact operators in the
   reflexive
   asymptotic $\ell_1$ HI space $X_{AD}$ of Argyros and Deliyanni.
   K. Beanland has extended Gasparis' result in the class of
   asymptotic $\ell_p$ HI spaces, for $1<p<\infty$, in \cite{B}.

   The structure of $\mc{L}(X)$ has been also studied for
   non-reflexive HI spaces (\cite{AT1}, \cite{AAT}, \cite{PR}).
   It is notable that in all these examples, each strictly
   singular operator $T\in\mc{L}(X)$ is  a weakly compact
   one. It is an open problem whether there exists an HI
   Banach space $X$ and $T\in\mc{L}(X)$ which is strictly singular
   and not weakly compact.

   The scalar plus compact problem was recently solved
    by S.  Argyros  and  R. Haydon \cite{AH}.
    It is shown that there exists an HI $\ell_1$ predual Banach space
    $\eqs_K$ such that every  $T:\eqs_K\to\eqs_K$ is of the form
   $T=\lambda I+K$, with
   $K$ a compact operator. The corresponding problem for reflexive
   spaces remains open.

   The present paper is devoted to the study of the subalgebra of
   diagonal operators of a HI space $X$ with a  Schauder basis \seq{e}{n}.
   Let' s  recall that  for a Banach space $X$ with an a priori fixed
    basis \seq{e}{n}, a bounded linear operator
   $T:X\to X$ is said to be diagonal, if for each $n$, $Te_n$ is a
   scalar multiple of $e_n$, $Te_n=\lambda_n e_n$. We denote by
   $\mc{L}_{\diag}(X)$ the space of all  diagonal
   operators $T:X\to X$. Note that if the diagonal operator $T$ is
   strictly singular then the sequence $(\lambda_n)_{n\in\N}$ of  eigenvalues of $T$
    converges to $0$.

     As is well known,
    when the basis \seq{e}{n} of the space $X$ is an  unconditional one,
    the space $\mc{L}_{\diag}(X)$ is isomorphic to
    $\ell_{\infty}(\N)$ and operator $T\in\mc{L}_{\diag}(X)$ is
    strictly singular if and only if $T$ is compact and this happens
     if and only if
    the sequence $(\lambda_n)_{n\in\N}$ of  eigenvalues of $T$
    is a null sequence.

   The following  question arises naturally.
   \begin{enumerate}
   \item[{\bf (Q)}]  Do  there exist
    strictly singular non-compact diagonal operators on some HI space with a Schauder basis?
   \end{enumerate}

   The aim of the present paper is to give a positive answer to
   ${\bf (Q)}$,
       by defining a HI space $\eqs_d$
   with a basis,
   on which there exist strictly singular non-compact diagonal operators.
   More precisely the space $\mc{L}_{\diag}(\eqs_d)$ contains  isomorphic copies
   of $\ell_{\infty}(\N)$ in a natural manner.

   It is worth pointing out that the construction of strictly
   singular non-compact diagonal
     operators lies heavily on the
   conditional structure of the underlying space $\eqs_d$.
   Previous constructions, like \cite{AnSc}, \cite{Ga}, concern
   the existence of strictly singular non-compact operators acting
   on the unconditional frame of the HI spaces. In particular
   Gasparis (\cite{Ga}) based his construction on an elegant idea which
   allowed him to define a mixed Tsirelson space
   $T[(\mc{S}_{n_j},\frac{1}{m_j})_j]$ such that its dual
   $T^*[(\mc{S}_{n_j},\frac{1}{m_j})_j]$ admits a $c_0^\omega$
   spreading model. An adaptation of Gasparis method in the frame
   of $T[(\mc{A}_{n_j},\frac{1}{m_j})_j]$ is the first of the
   fundamental ingredients in our construction.
   More precisely,  for an appropriate double sequence
   $(m_j,n_j)_j$, it it shown that
   the dual space $T^*[(\mc{A}_{n_j},\frac{1}{m_j})_j]$
  admits a $c_0$ spreading model. It is not known whether each
  mixed Tsirelson space of the form
  $T[(\mc{A}_{n_j},\frac{1}{m_j})_j]$
  not containing any $\ell_p(\N)$ or $c_0(\N)$,
  shares the aforementioned property. This problem
  remains open even for Schlumprecht' s space
  $S=T[(\mc{A}_n,\frac{1}{\log_2(n+1)})_{n}]$ (\cite{Sc}).
 As follows from  \cite{KL}, the space $S$ admits a $\ell_1$
 spreading model. This, however,  does not guarantee the existence
 of a $c_0$ spreading model in $S^*$.

  The second ingredient of our construction, is the finite block
     representability of the space $J_{T_0}$ in every block
     subspace of $\eqs_d$.  The space $J_{T_0}$,  defined
     in \cite{ALT2}, has a Schauder basis \seq{t}{n} which is
     conditional and dominates the summing basis of $c_0$. We
     shall discuss in more detail the above two ingredients in the
     rest of the introduction.

  In section \ref{Nsec1} we define a mixed Tsirelson space
   $T_0=T[(\mc{A}_{n_j},\frac{1}{m_j})_{j=1}^{\infty}]$
   with an unconditional basis, such that its dual space $T_0^*$
   admits
   a $c_0$ spreading model.  The space $T_0$ will be the unconditional
   frame required for the definition of the HI space $\eqs_d$, in a similar manner
    as
   Schlumprecht's space \cite{Sc} is the unconditional frame for the space
   $X_{GM}$ of Gowers and Maurey \cite{GM1} and as the asymptotic $\ell_1$ space $X_{ad}$ having an
   unconditional basis is the unconditional frame for the asymptotic $\ell_1$ HI space $X_{AD}$ \cite{AD}.
   The sequence \seq{m}{j} we use for the space $T_0$, as well as
   for the space  $\eqs_d$,  is inspired by Gasparis work
   (\cite{Ga}) and is defined recursively as follows
   \[  m_1=m_2=2,    \qquad \mbox{ and}\qquad m_{j}=m_{j-1}^2=m_1\cdot m_2\cdot\ldots\cdot m_{j-1}
      \quad\mbox{ for } j\ge 3, \]
    while we require that the sequence \seq{n}{j}  increases rather fast, namely
   \[ n_1\ge 2^3m_3  \qquad \mbox{ and}\qquad n_j\ge (4n_{j-1})^5\cdot m_j  \quad\mbox{ for }    j\ge 2.\]

   As it is well known, the norm of the space $T_0=T[(\mc{A}_{n_j},\frac{1}{m_j})_{j=1}^{\infty}]$
   satisfies the implicit formula
   \[  \|x\|=\max\{\|x\|_{\infty},\;\sup\limits_j \|x\|_j|\}  \]
   where $\|x\|_j=\frac{1}{m_j}\sup
   \sum\limits_{k=1}^{n_j}\|E_kx\|$ with the supremum taken over
   all families $(E_k)_{k=1}^{n_j}$ of successive finite sets.
   Note that the Schauder basis \seq{e}{l} of $T_0$ is
   subsymmetric and also each $\|\;\|_j$ is an equivalent norm on
   $T_0$.

   The fundamental property of mixed Tsirelson spaces, like the above
   $T_0$,
   is a biorthogonality described as follows. There exists a
   null sequence $(\e_i)_i$
   of positive numbers, such that
   for every  infinite dimensional subspace $Z$ and every
   $j\in\N$, there exists a vector $z\in Z$ with $\|z_j\|=\|z_j\|_j$
   and $\|z_j\|_i\le \e_{\min\{i,j\}}$.
   A transparent example of this phenomenon are the vectors of the
   form
   $y_j=\frac{m_j}{n_j}\sum\limits_{l=1}^{n_j}e_l$ in $T_0$,
   satisfying  the following properties. $\|y_j\|=\|y_j\|_j=1$ while
   $\|y_j\|_i\le \frac{2}{m_i}$ for $i<j$ and
   $\|y_j\|_i\le \frac{m_j}{m_i}$ for $i>j$.

    As follows from Gasparis method the above unique evaluation
    of the vectors $(y_j)_j$  is no longer true for all averages
    of the basis. More precisely setting
    $p_j=n_1\cdot n_2\cdot\ldots \cdot n_{j-1}$ the following
    holds.
    \begin{proposition}\label{Nprop13}
    For every $j\ge 3$ we have that
    \[\|\frac{1}{p_j}\sum\limits_{l=1}^{p_j}e_l\|\le\frac{4}{m_j}.\]
    \end{proposition}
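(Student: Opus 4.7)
The plan is to bound the vector $y := \frac{1}{p_j}\sum_{l=1}^{p_j}e_l$ via the implicit formula for the $T_0$-norm, combined with strong induction on $j$; throughout I exploit the natural tree decomposition afforded by $p_j = n_1 n_2 \cdots n_{j-1}$.

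By the implicit formula, $\|y\| = \max\{\|y\|_\infty,\sup_i \|y\|_i\}$, and $\|y\|_\infty = 1/p_j$ is negligible (the growth hypotheses imply $p_j \ge m_j$). Setting $\phi(r) := \|\frac{1}{r}\sum_{l=1}^r e_l\|$ and $h(r) := r\phi(r) = \|\sum_{l=1}^r e_l\|$, subsymmetry and $1$-unconditionality give $\|E_k y\| = h(q_k)/p_j$ for $q_k := |E_k \cap \{1,\ldots,p_j\}|$. So the problem reduces to showing
\[
\sum_{k=1}^{n_i} h(q_k) \;\le\; \frac{4\,p_j\,m_i}{m_j}\qquad\text{whenever }\sum_{k=1}^{n_i} q_k \le p_j,
\]
for every $i\in\N$. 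For $i \ge j$, the trivial bound $h(q)\le q$ gives $\sum h(q_k) \le p_j \le 4 p_j m_i/m_j$, and we are done.

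The substantive case is $i < j$, which I would attack by strong induction on $j$, using $\phi(p_r) \le 4/m_r$ for all $3\le r<j$ (the base $j=3$ is immediate since $m_3=4$ and $\phi\le 1$). The function $h$ is non-decreasing and subadditive (directly from subsymmetry and $1$-unconditionality), so for any $q$ one has the key estimate $h(q) \le \lceil q/p_{j-1}\rceil\,h(p_{j-1}) \le 4\lceil q/p_{j-1}\rceil\,p_{j-1}/m_{j-1}$. Summing over the $q_k$'s and using $\sum q_k\le p_j$ together with $n_i \le n_{j-1}$, the inequality $\sum h(q_k)\le 4 p_j m_i/m_j$ at $i=j-1$ follows from the recursion $m_j = m_{j-1}^2 = m_1 m_2\cdots m_{j-1}$, which is precisely what combines the inductive factor $1/m_{j-1}$ in $\phi(p_{j-1})$ with the outer factor $1/m_{j-1}$ into the required $1/m_j$.

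The hard part will be the ``middle'' subranges $i<j-1$: here the balanced partition size $p_j/n_i$ lies in $(p_{j-1},p_j)$ and does not coincide with any $p_r$, so the inductive bound at a single scale is not sharp. To handle this one must refine the scale analysis, classifying the $q_k$'s by the dyadic-like interval $[p_r,p_{r+1})$ in which they lie and summing a geometric-type series over $r<j$. The rapid growth condition $n_j \ge (4n_{j-1})^5 m_j$ is precisely what makes this series converge with enough room to preserve the universal constant $4$ across all scales; tracking constants through this multi-scale combinatorial argument is the main technical content. Once each $\|y\|_i$ is bounded by $4/m_j$, the implicit formula yields the desired estimate $\|y\| \le 4/m_j$.
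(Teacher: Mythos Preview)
Your approach via the implicit formula and strong induction on $j$ is natural but has a genuine gap in exactly the ``hard part'' you identify. The induction hypothesis $\phi(p_r)\le 4/m_r$ for $r<j$, combined with subadditivity of $h$, is far too weak to control $\|y\|_i$ for small $i$. Concretely, take $i=1$: the balanced partition has $q_k=p_j/n_1=n_2\cdots n_{j-1}$, and the best your tools give is $h(q_k)\le\lceil q_k/p_{j-1}\rceil\,h(p_{j-1})\approx 4q_k/m_{j-1}$, so that $\|y\|_1\lesssim 2/m_{j-1}=2/\sqrt{m_j}$ --- off from the target $4/m_j$ by a factor of order $\sqrt{m_j}$. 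The difficulty is that $p_j/n_i$ is not of the form $p_r$, and your hypothesis carries no information about $h$ at such sizes; classifying the $q_k$'s by which interval $[p_r,p_{r+1})$ they fall into still only accesses $h(p_r)$, and the loss does not telescope. (Even at $i=j-1$ there is already a factor-$2$ slippage from the ceiling $\lceil q_k/p_{j-1}\rceil$, so the induction with constant $4$ does not close there either.) The growth condition on $(n_j)$ does not help: the obstruction is in the $m_j$'s, namely that $m_{j-1}=\sqrt{m_j}$ rather than $m_j/C$.

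The paper's proof (Lemma~\ref{Nlem2}) is structurally different and does not go through the implicit formula at all. It fixes $f\in K_0$ together with a tree analysis, immediately splits off the part $\{l:|f(e_l)|\le 1/m_j\}$ (which contributes $\le p_j/m_j$ trivially), and on the remaining support exploits the leafwise constraint $\prod_{i\in A_l}m_i^{d_{l,i}}<m_j=\prod_{i<j}m_i$: this forces $A_l\subsetneq\{1,\dots,j-1\}$ and $d_{l,j-1}=\cdots=d_{l,j-k}=1$ whenever $j-1,\dots,j-k\in A_l$. That yields a partition of the support into sets $B_{j-1},\dots,B_1$, and on each $B_{j-k}$ a regrouping argument writes the restricted functional as $(m_{j-1}\cdots m_{j-k+1})^{-1}$ times a disjointly supported sum of functionals in the modified norming set $K_M(j-k-1)$, whose supports are then bounded in cardinality by the $\ell_q$-estimate of Lemma~\ref{Nlem1} (the Bernu\'es--Deliyanni inequality). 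Summing over $k$ gives $3p_j/m_j$. The multiplicative identity $m_j=\prod_{i<j}m_i$ is thus used pointwise at each leaf of the tree, not globally through a one-step recursion on the norm; this is precisely the mechanism your inductive scheme is missing.
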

    As $p_j=\prod\limits_{i=1}^{j-1}n_i$ and
    $m_j=\prod\limits_{i=1}^{j-1}m_i$,
    it is easily shown that
     $\frac{1}{m_j}\le \|\frac{1}{p_j}\sum\limits_{l=1}^{p_j}e_l\|_i$
     for $1\le i\le j$. Hence we conclude that, unlikely for the
     vectors $\frac{m_j}{n_j}\sum\limits_{l=1}^{n_j}e_l$,
     the vectors $\frac{m_j}{p_j}\sum\limits_{l=1}^{p_j}e_l$ have
     simultaneous evaluations by the family of norms $(\|\;\|_i)_{1\le i\le j}$.
      This actually yields that if we consider
     successive functionals $(\phi_j)_{j=3}^{\infty}$
     of the form $\phi_j=\frac{1}{m_j}\sum\limits_{l\in F_j}e_l^*$
     with $\#(F_j)=p_j$, then the sequence $(\phi_j)_{j=3}^{\infty}$
     generates a $c_0$ spreading model in $T_0^*$.
     The proof of Proposition \ref{Nprop13} is more involved than
     the corresponding one for the vectors
     $\frac{1}{n_j}\sum\limits_{l=1}^{n_j}e_l$ and requires some
     new techniques which could be of independent interest.

    The existence of a sequence generating a $c_0$ spreading model
    in the dual space $T_0^*$ is the basic tool for constructing
    strictly singular non-compact operators on $T_0$. This follows
    from the next general statement which is presented in
    Proposition \ref{Nprop12} of   section \ref{Nsec88}.
    \begin{proposition}
  Let $X,Y$ be a pair of Banach spaces such that
  \begin{enumerate}
  \item[(i)] There exists a sequence $(x_n^*)_{n\in\N}$ in $X^*$
  generating a $c_0$ spreading model.
  \item[(ii)] The space $Y$ has a normalized Schauder basis
  \seq{e}{n} and there exists a norming set $D$ of $Y$ (i.e.
  $D\subset Y^*$ and $\|y\|=\sup\{f(y):\;f\in D\}$ for every $y\in
  Y$), such that for every $\e>0$ there exists $M_{\e}\in\N$ such
  that for every $f\in D$, \[\#\{n\in\N:\; |f(e_n)|>\e\}\le
  M_{\e}.\]
  \end{enumerate}
  Then there exists a strictly increasing sequence of integers
 \seq{q}{n} such that the operator $T:X\to Y$ defined by the rule
 \[T(x)=\sum\limits_{n=1}^{\infty}x^*_{q_n}(x)e_n\] is
 bounded and non-compact.
 \end{proposition}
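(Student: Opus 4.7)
The plan is to construct $(q_n)$ inductively so as to control both the boundedness and the non-compactness of $T$. By (i), after passing to a subsequence and relabeling, I may assume $(x_n^*)$ itself generates a $c_0$ spreading model with constant $C\ge 1$: for every $k\le n_1<\cdots<n_k$ and scalars $(a_i)$,
\[
\Big\|\sum_{i=1}^k a_i x^*_{n_i}\Big\|_{X^*}\le C\max_i|a_i|.
\]
Standard spreading-model facts then give $c\le\|x_n^*\|\le C$ for some $c>0$ and $(x_n^*)$ weakly null in $X^*$. Moreover, since $D$ norms $Y$ and $(e_n)$ is normalized, every $f\in D$ lies in $B_{Y^*}$, so $|f(e_n)|\le 1$.

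For the boundedness of $T$, fix $f\in D$ and $x\in B_X$; the goal is to bound
\[
|f(Tx)|=\Big|\sum_n f(e_n)\,x^*_{q_n}(x)\Big|
\]
uniformly in $f$ and $x$. I would split the sum dyadically via
\[
F_m(f)=\{n:2^{-(m+1)}<|f(e_n)|\le 2^{-m}\},\quad m\ge 0,
\]
so that $|F_m(f)|\le M_{2^{-(m+1)}}$ by (ii). Writing $F_m(f)=F_m'(f)\sqcup F_m''(f)$ with $F_m''(f)=F_m(f)\cap(M_{2^{-(m+1)}},\infty)$, the tail satisfies $|F_m''(f)|\le\min F_m''(f)$, and the spreading-model estimate yields
\[
\Big\|\sum_{n\in F_m''(f)}f(e_n)\,x^*_{q_n}\Big\|\le C\cdot 2^{-m},
\]
which sums over $m$ to a geometric constant. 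The head $F_m'(f)\subset[1,M_{2^{-(m+1)}}]$ is the delicate part: its aggregated contribution across $m$ must be controlled through the inductive choice of $(q_n)$, whose growth is calibrated against the function $\varepsilon\mapsto M_\varepsilon$ so that the head estimates sum to a uniformly bounded constant.

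For non-compactness, choose $w_n\in B_X$ with $x^*_{q_n}(w_n)\ge c/2$ (possible since $\|x^*_{q_n}\|\ge c$). By Banach--Alaoglu, pass to a subsequence $(w_{n_l})$ with $w_{n_l}\xrightarrow{w^*}w^{**}$ in $X^{**}$. For each fixed $k$,
\[
\lim_{l\to\infty}x^*_{q_{n_k}}(w_{n_l})=w^{**}(x^*_{q_{n_k}}),
\]
and since $(x_n^*)$ is weakly null in $X^*$, $w^{**}(x^*_{q_{n_k}})\to 0$ as $k\to\infty$. A diagonal extraction then ensures $|x^*_{q_{n_k}}(w_{n_l})|<c/8$ for $l>k$ both large. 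With $K_b$ the basis constant of $(e_n)$,
\[
\|Tw_{n_l}-Tw_{n_k}\|_Y\ge\frac{1}{K_b}\big|e^*_{n_k}(Tw_{n_l}-Tw_{n_k})\big|=\frac{1}{K_b}\big|x^*_{q_{n_k}}(w_{n_l})-x^*_{q_{n_k}}(w_{n_k})\big|\ge\frac{3c}{8K_b},
\]
so $(Tw_n)$ has no Cauchy subsequence, hence $T$ is non-compact.

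The main obstacle is the boundedness, specifically the control of the head parts $F_m'(f)$ uniformly in $f\in D$. These sit in initial segments whose sizes $M_{2^{-(m+1)}}$ may grow rapidly in $m$; the inductive choice of $(q_n)$ must be carefully calibrated against $M_\varepsilon$ to absorb these head contributions into a convergent estimate. The non-compactness argument, by contrast, is a relatively standard weak-$*$ extraction and approximate-biorthogonality manipulation.
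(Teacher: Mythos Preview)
Your non-compactness argument is correct, though the paper takes a shorter route: since $(x_n^*)$ is weakly null in a dual space it is $w^*$-null, so by a theorem of Johnson--Rosenthal one may pass to a $w^*$-basic subsequence, which comes with a bounded biorthogonal sequence $(x_n)$ in $X$; then $Tx_{q_i}-Tx_{q_j}=e_i-e_j$ exactly, and non-compactness is immediate.

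The genuine gap is in the boundedness argument, specifically in the claim that the head $F_m'(f)=F_m(f)\cap[1,M_{2^{-(m+1)}}]$ can be controlled through the choice of $(q_n)$. With your split, no choice of $(q_n)$ helps: the only available bound on each term is $|f(e_n)|\,|x^*_{q_n}(x)|\le 2^{-m}C\|x\|$, and this is independent of $q_n$. Hence the head contributes up to $C\|x\|\sum_m 2^{-m}\,\#F_m'(f)$, and since $\#F_m'(f)$ can be as large as $M_{2^{-(m+1)}}$, this diverges whenever $M_\varepsilon$ grows faster than polynomially in $1/\varepsilon$.

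The repair, which is exactly what the paper does, is to move the split point from $M_{2^{-(m+1)}}$ to the \emph{level index} $m$ itself: take $D_m=F_m(f)\cap[1,m)$ and $C_m=F_m(f)\cap[m,\infty)$. Now the head $D_m$ has at most $m-1$ elements, so its total contribution is at most $C\|x\|\sum_m (m-1)2^{-m}<\infty$. With this split the tail $C_m$ no longer automatically satisfies the spreading-model hypothesis, and \emph{this} is where the calibration of $(q_n)$ actually enters: choose the $q_n$ strictly increasing with $q_m\ge M_{2^{-(m+1)}}$, so that
\[
\#C_m\le\#F_m(f)\le M_{2^{-(m+1)}}\le q_m\le\min\{q_n:n\in C_m\},
\]
and the spreading-model estimate applies to $C_m$. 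In short, the growth of $(q_n)$ is needed to handle the \emph{tail}, not the head; the head is tamed by the placement of the split, not by $(q_n)$.
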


 The fact that every mixed Tsirelson space of the form
 $T[(A_{n_j},\frac{1}{m_j})_j]$ satisfies condition (ii) of the
 above proposition, yields that there exist  strictly singular
 non-compact operators $S:T_0\to T_0$.

    In section \ref{Nsec2}, the space $\eqs_d$ is defined with the use of the above defined sequences
    \seq{m}{j}, \seq{n}{j}. The norming set $K_d$ of the space $\eqs_d$ is defined to be the minimal
    subset of $c_{00}(\N)$ such that:
    \begin{enumerate}
    \item[(i)] It contains  $\{\pm e_n^*:\; n\in\N\}$.
    \item[(ii)] It is symmetric and closed under the restriction of its
    elements on intervals of $\N$.
    \item[(iii)] For each $j$, it is closed under the $(\mc{A}_{n_j},\frac{1}{m_j})$ operation.
    \item[(iv)] For each $j\ge 2$, it closed under the
    $(\mc{A}_{n_{2j-1}},\frac{1}{\sqrt{m_{2j-1}}})$ operation on $n_{2j-1}$ special sequences.
    \end{enumerate}
    The special sequences are defined in the standard manner
    with the use of a Gowers-Maurey type coding function $\sigma$.
    Notice that, since $m_{j+1}=m_j^2$ for $j\neq 1$, condition (iv) is equivalent
    to saying that the set $K_d$ is closed under the
    $(\mc{A}_{n_{2j+1}},\frac{1}{m_{2j}})$ operation on $n_{2j+1}$ special sequences
    for each $j$.
    Using the standard methods for this purpose, we prove that the space $\eqs_d$ is  HI.

    In section \ref{Nsec3}, a class of bounded diagonal operators on the space $\eqs_d$ is defined.
    These diagonal operators are of the form $\sum\limits_k \lambda_k D_{j_k}$, where $(D_{j_k})_k$ is a
    sequence of diagonal operators with successive finite dimensional ranges.  To be more
    precise,  for each $j$ and every choice of successive intervals
    $(I_i^j)_{i=1}^{p_j}$ we
     define a diagonal operator $D_j:\eqs_d\to \eqs_d$, by the rule
    \[  D_j(x)  =\frac{1}{m_j}\sum\limits_{i=1}^{p_j}I^j_ix.  \]
    Under certain growth conditions on the set $\{j_k:\; k\in\N\}$, we prove that for
    every $(\lambda_k)_{k\in\N}\in\ell_{\infty}(\N)$ the diagonal operator
    $D=\sum\limits_k \lambda_k D_{j_k}:\eqs_d\to\eqs_d$ is bounded with
    $\|D\|\le C_0 \cdot \sup\limits_k |\lambda_k|$  for some universal constant $C_0$.
    It easily follows that such an operator $D$ is strictly singular, since the
    space $\eqs_d$ is HI and $\lim\limits_n D(e_n)=0$ (Proposition 1.2 of \cite{AT1}).

    In order to construct strictly singular non-compact diagonal operators on $\eqs_d$
    we  prove that for appropriate choice of the intervals
    $\big((I^{j_k}_i)_{i=1}^{p_{j_k}}\big)_{k=1}^{\infty}$ the corresponding diagonal operator
    $\sum\limits_k D_{j_k}$ is non-compact.
    The main tool for studying the structure of the space of diagonal operators on
    $\eqs_d$,
     is the finite block representability
    of  $J_{T_0}$ in every block subspace of $\eqs_d$.
    The space $J_{T_0}$ is the Jamesification
    of the space $T_0$ described earlier.
    This class of spaces was defined
     by S. Bellenot, R. Haydon and E. Odell in \cite{BHO}.
    Using the language of mixed Tsirelson spaces, we may write
    \[J_{T_0}=T\big[G,\big(\mc{A}_{n_j},\frac{1}{m_j}\big)_{n\in\N}\big],  \]
     with $G=\{\pm \chi_I:\;I\text{ finite interval of }\N\}$.
     We prove that for every $N\in\N$ and every block subspace $Z$ of $\eqs_d$,
     there exists  a block sequence $(z_k)_{k=1}^N$ in $Z$ such that
      \begin{equation}\label{eq14}
      \|\sum\limits_{k=1}^N  \mu_kt_k\|_{J_{T_0}}\le
  \|\sum\limits_{k=1}^N  \mu_kz_k\|_{\eqs_d}\le c\cdot
   \|\sum\limits_{k=1}^N  \mu_kt_k\|_{J_{T_0}}
   \end{equation}
     for $c$ a  universal constant. The notation \seq{t}{n} stands for the standard basis
     of $J_{T_0}$. A similar result in a different context, is
     given by S. Argyros, J. Lopez-Abad and S. Todorcevic
      in    \cite{ALT1}, \cite{ALT2}. The precise definition of
       the space $J_{T_0}$ is given in section \ref{Nsec4},
      where the theorem of the finite block representability of $J_{T_0}$ in
      every block subspace of $\eqs_d$ is stated, postponing its proof for section
      \ref{Nsec5}.
   Section \ref{Nsec4} is mainly devoted to the construction of
   the diagonal strictly singular non-compact operators on the
   space $\eqs_d$.

  For a given block subspace $Z$ of $\eqs_d$, using \eqref{eq14}
  in conjunction with some easy estimates on the basis of $J_{T_0}$,
   we construct successive block sequences   $(y^j_k)_{k=1}^{2p_j}$ in $Z$,
  such that
   \begin{equation}\label{eq15}
  \|\frac{1}{2p_j}\sum\limits_{k=1}^{p_j}y^j_{2k-1}\|\ge\frac{1}{2}
  \qquad\text{ and }\qquad
  \|\frac{1}{2p_j}\sum\limits_{k=1}^{2p_j}(-1)^{k+1}y^j_{k}\|\le
  \frac{4c}{m_j}.
  \end{equation}
   We set $D_j(x)=\frac{1}{m_j}\sum\limits_{i=1}^{p_j}I_i^j x$ for each $j$,
   where $I^j_i=\ran(y^j_{2i-1})$.
   Let' s point out that the diagonal operator $D_j$ acting on the
   vector  $x_j=\frac{m_j}{2p_j}\sum\limits_{i=1}^{2p_j}(-1)^{i+1}y^j_i$,
   ignores $y^j_i$ when $i$ is even. This in conjunction with  \eqref{eq15} yields that
    $\|x_j\|\le 4c$, $\|D_jx_j\|\ge\frac{1}{2}$.
   For a suitable choice of the set $\{j_k:\; k\in\N\}$, the diagonal operator
   $D=\sum\limits_k D_{j_k}$ is bounded and strictly singular, while it is
   non-compact (even the restriction of $D$ on the subspace $Z$ is non-compact) since for the block
   sequence $(x_{j_k})_{k\in\N}$ we have that $\|x_{j_k}\|\le  4c$ while
    $\|Dx_{j_k}\|\ge \frac{1}{2}$.

     Moreover, it is easily shown that for every
    $(\lambda_k)_{k\in\N}\in\ell_{\infty}(\N)$,
   \[  \frac{1}{8c}\cdot \sup\limits_k|\lambda_k|\le
    \|\sum\limits_{k=1}^{\infty}\lambda_kD_{j_k}\|\le
 C_0\cdot\sup\limits_k|\lambda_k| \] hence the space $\mc{L}_{\diag}(\eqs_d)$ of
 diagonal operators of $\eqs_d$ contains an isomorphic copy of $\ell_{\infty}(\N)$.
      The next theorem summarizes the basic properties of the space $\eqs_d$.
      \begin{theorem}
      There exists a Banach space $\eqs_d$ with a Schauder basis \seq{e}{n} such that:
      \begin{enumerate}
      \item[(i)] The space $\eqs_d$ is reflexive and HI.
      \item[(ii)] For every infinite dimensional subspace $Z$ of $\eqs_d$ there exists a
      diagonal strictly singular operator $D:\eqs_d\to\eqs_d$ such that the restriction of
      $D$ on the subspace $Z$ is a non-compact operator.
      \item[(iii)] The space
      $\mc{L}_{\diag}(\eqs_d)$ of
  diagonal operators of $\eqs_d$ with respect to the basis \seq{e}{n}
  contains an isomorphic copy of $\ell_{\infty}(\N)$.
      \end{enumerate}
      \end{theorem}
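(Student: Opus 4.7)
The plan is to assemble the building blocks developed in the subsequent sections. For part (i), once the norming set $K_d$ has been defined through conditions (i)--(iv) and $\eqs_d$ is its associated completion of $\co$, reflexivity follows by standard mixed Tsirelson arguments: the rapid growth $n_j\ge (4n_{j-1})^5 m_j$ combined with the operations $(\mc{A}_{n_j},\frac{1}{m_j})$ forces \seq{e}{n} to be both shrinking and boundedly complete, hence $\eqs_d$ is reflexive. For the HI property I would use the Gowers--Maurey recipe adapted to the present frame: given block subspaces $Y, Z$ of $\eqs_d$ and $\delta>0$, manufacture rapidly increasing sequences of $\ell_1^{n_j}$-averages in $Y$ and $Z$, glue them into a dependent sequence through the coding $\sigma$ supplied by clause (iv), and evaluate the resulting special functional on a combination $y-z$ (large) while bounding every other functional in $K_d$ on $y+z$ by a tree analysis (small), obtaining $\|y+z\|\le\delta\|y-z\|$ for suitable non-zero $y\in Y$, $z\in Z$.

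For parts (ii) and (iii), fix a block subspace $Z$ of $\eqs_d$. Using the finite block representability of $J_{T_0}$ in $Z$ (inequality \eqref{eq14}) together with elementary estimates on the Jamesified basis \seq{t}{n} which yield \eqref{eq15}, I would extract, for each $j$ in a sufficiently fast-growing sequence $(j_k)$, a block sequence $(y^j_k)_{k=1}^{2p_j}$ in $Z$ realising the two displayed bounds of \eqref{eq15}. Setting $I^j_i:=\ran(y^j_{2i-1})$ and $D_j x:=\frac{1}{m_j}\sum_{i=1}^{p_j}I^j_i x$, the vectors $x_j:=\frac{m_j}{2p_j}\sum_{i=1}^{2p_j}(-1)^{i+1}y^j_i$ satisfy $\|x_j\|\le 4c$ and $\|D_j x_j\|\ge 1/2$. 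The boundedness theorem of Section~\ref{Nsec3} then supplies $\|\sum_k\lambda_k D_{j_k}\|\le C_0\sup_k|\lambda_k|$, so $D:=\sum_k D_{j_k}$ is bounded. As $\eqs_d$ is HI we may write $D=\lambda I+S$ with $S\in\mc{S}(\eqs_d)$; since each $e_n$ meets at most one interval $I^{j_k}_i$ and $1/m_{j_k}\to 0$, we have $D(e_n)\to 0$, forcing $\lambda=0$, and hence $D$ is itself strictly singular. The block sequence $(x_{j_k})\subseteq Z$ with $\|x_{j_k}\|\le 4c$ and $\|Dx_{j_k}\|\ge 1/2$ shows $D|_Z$ is non-compact, giving (ii). Finally, evaluating $\sum_k\lambda_k D_{j_k}$ on a single $x_{j_{k_0}}$ with $|\lambda_{k_0}|$ close to $\sup_k|\lambda_k|$ produces the lower estimate $\frac{1}{8c}\sup_k|\lambda_k|\le\|\sum_k\lambda_k D_{j_k}\|$, whence $(\lambda_k)\mapsto\sum_k\lambda_k D_{j_k}$ is an isomorphic embedding of $\ell_\infty(\N)$ into $\mc{L}_{\diag}(\eqs_d)$, proving (iii).

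The main obstacle is neither the HI argument nor the assembly above; it is the finite block representability of $J_{T_0}$ in every block subspace of $\eqs_d$ (inequality \eqref{eq14}), whose proof is postponed to Section~\ref{Nsec5}. Constructing block sequences $(z_k)_{k=1}^N\subseteq Z$ whose rational combinations are two-sidedly equivalent to $\sum\mu_k t_k$ under \emph{every} functional of $K_d$ requires a delicate tree analysis handling simultaneously the unconditional $(\mc{A}_{n_j},\frac{1}{m_j})$ operations and the special functionals introduced by clause (iv). The enabling mechanism is Proposition~\ref{Nprop13}: through the functionals $\phi_j=\frac{1}{m_j}\sum_{l\in F_j}e^*_l$ it supplies $c_0$-spreading model behaviour in $T_0^*$, from which one can dualize block vectors in $\eqs_d$ whose upper bound is controlled by $\|\cdot\|_{J_{T_0}}$ while their lower bound reproduces the Jamesification structure; matching these two estimates along every branch of the tree analysis, and doing so uniformly in $N$ and $Z$, is the technically heaviest step of the construction.
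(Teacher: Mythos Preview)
Your proposal is essentially correct and follows the paper's approach closely: part (i) is Theorem~\ref{Nth1}, and parts (ii)--(iii) are assembled exactly as in Theorem~\ref{Nth3}, resting on Proposition~\ref{Nprop4} for boundedness and on Theorem~\ref{Nth2} (the finite block representability of $J_{T_0}$) for the crucial estimates \eqref{eq15}. Two small corrections: in the HI step the roles of $y+z$ and $y-z$ are reversed (the special functional makes the \emph{sum} large and the alternating combination is small, yielding $\|z-w\|\le\delta\|z+w\|$); and your description of the ``enabling mechanism'' for Theorem~\ref{Nth2} is off---the paper does not dualize from the $c_0$ spreading model but instead builds the blocks $(y_k)$ directly as averages over a $(6,2i+1)$ dependent sequence and controls $|f(\sum\mu_k y_k)|$ by a tree analysis (Claims~1 and~2 in Section~\ref{Nsec5}), with Proposition~\ref{Nprop13} entering only indirectly via Proposition~\ref{Nprop11}(ii) and Lemma~\ref{Nlem3}.
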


   As we have mentioned above the scalar plus compact problem
   remains open within the class of separable reflexive Banach
   spaces. Even the weaker problem related to the present work,
   namely the existence of a reflexive  Banach  space with a Schauder basis
   such that every diagonal operator is of the form $\lambda I+K$,
   with $K$ a compact diagonal operator, is still open.
   In a forthcoming paper \cite{ADT}, we shall present a
   quasireflexive Banach space   $\eqs_D$  with a Schauder basis, such that
   the space $\mc{L}_{\diag}(\eqs_D)$ is HI and satisfies the
   scalar plus compact property.

\section{The mixed Tsirelson space $T_0= T[(\mc{A}_{n_j},\frac{1}{m_j})_{j=1}^{\infty
}]$}\label{Nsec1}

 This section
 is devoted to the construction of a mixed Tsirelson space $T_0$ with
 an unconditional basis, such that the dual space $T_0^*$ admits a sequence
 which generates a $c_0$ spreading model.
 This space is of the form $T_0=T[(\mc{A}_{n_j},\frac{1}{m_j})_{j\in\N}]$ with a very careful choice
 of the sequence $(m_j)_{j\in\N}$.

 \begin{notation}\label{Wnot1}
  For a finite  set $F$, we denote   by
 $\#F$   the cardinality of the set $F$. We
 denote by $\mc{A}_n$ the class of subsets of $\N$ with cardinality
 less than or equal to $n$,
\[\mathcal {A}_n=\{ F\subset \N:\; \# F\le n\}.\]

 By $c_{00}(\N)$ we denote the vector space of all finitely supported
 sequences of reals and  by either $(e_i)_{i=1}^{\infty}$ or
 $(e^*_i)_{i=1}^{\infty}$, depending on the context, its standard
 Hamel basis.
 For $x=\sum\limits_{i=1}^{\infty}a_ie_i \in c_{00}(\N)$, the support
  of $x$ is the set $\supp x=\{ i\in\mathbb{N}:\; a_i\neq
 0\}$ while the  range $\ran x$ of $x$,  is the smallest interval
 of $\mathbb{N}$ containing $\supp x$. For nonempty finite subsets
 $E, F$ of $\mathbb{N}$,  we write $E<F$ if $\max E < \min F$.
  For $n\in\mathbb{N}$,
 $E\subset \mathbb{N}$ we write $n<E$ (resp. $n\le E$) if $n<\min E$
 (resp. $n\le \min E$).
  For $x,y$ nonzero vectors  in $c_{00}(\N)$,  $x<y$ means
  $\supp x<\supp y$.
  For $n\in\mathbb{N}$, $x\in c_{00}(\N)$, we write
 $n<x$ (resp. $n\le x$) if $n< \supp x$ (resp. $n\le  \supp x$). We
 shall  call the subsets $(E_i)_{i=1}^n$ of $\N$   successive if
 $E_1<E_2<\cdots <E_n$.
  Similarly, the vectors
 $(x_i)_{i=1}^n$ are called  successive, if $x_1<x_2<\cdots < x_n$.
   For $x=\sum\limits_{i=1}^{\infty}a_ie_i$ and $E$ a  subset of
 ${\mathbb{N}},$ we  denote by $Ex$ the vector $Ex=\sum\limits_{i\in
 E}a_ie_i.$ Finally, for $f=\sum\limits_{i=1}^{\infty}\beta_ie_i^* \in c_{00}(\N)$
 and $x=\sum\limits_{i=1}^{\infty}a_ie_i \in c_{00}(\N)$
 we denote by $f(x)$  the real number
 $f(x)=\sum\limits_{i=1}^{\infty} a_i \beta_i$.
 \end{notation}

 \begin{definition}\label{Ndef1}
 Let $n\in {\mathbb N}$ and $\theta \in (0,1)$.
 \begin{enumerate}
 \item[(i)]  A finite sequence $(f_i)_{i=1}^k$ in $c_{00}(\N)$ is
 said to be $\mc{A}_n$ admissible if $k\le n$ and $f_1<f_2<\cdots
 <f_k$.
 \item[(ii)] The $(\mathcal{A}_n,\theta)$ operation on $c_{00}(\N)$
 is the operation which
 assigns to each $\mc{A}_n$  admissible sequence
 $f_1<f_2<\cdots <f_k$ the vector $\theta (f_1+f_2+\cdots
 +f_k)$.
 \end{enumerate}
 \end{definition}

  \begin{definition}\label{Ndef2}
 Given a pair $(m_j)_{j\in I}$, $(n_j)_{j\in I}$ of either finite ($I=\{1,\ldots,k\}$)
 or infinite ($I=\N$) increasing sequences of
 integers we  shall denote by
 $K=K[(m_j,n_j)_{j\in I}] $ the minimal subset of $c_{00}(\N)$ satisfying
 the following conditions.
 \begin{enumerate}
 \item[(i)] $\{\pm e_i^*:\;i\in {\mathbb N}\}\subset K$.
 \item[(ii)] For each $j\in I$, $K$ is closed under the
 $(\mc{A}_{n_j},\frac{1}{m_j})$ operation.
 \end{enumerate}
 \end{definition}

 It is easy to check that the set  $K$ is symmetric and closed
 under the restriction of its elements on subsets of $\N$.

 Let $j\in \N$. If  $f\in K$  is the result of the
 $(\mc{A}_{n_j},\frac{1}{m_j})$ operation on some sequence
 $f_1<f_2<\cdots <f_k$ $(k\le n_j)$ in $K$, we shall say that the
 weight of $f$ is $m_j$ and we shall  denote this fact  by
 $w(f)=m_j$. We note however that the weight $w(f)$ of a functional $f\in K$ is not
 necessarily uniquely determined.

 \begin{definition}\label{Ndef3}[The tree ${T_f}$
 of a functional ${ f\in K}$] \label{tree} Let $f\in K$. By a
 tree  of $f$ (or tree corresponding to the analysis of $f$)
 we mean a finite family
 $T_f=(f_a)_{a\in\mc{A}}$ indexed by a finite tree $\mc{A}$ with a
 unique root $0\in\mc{A}$ such that the following conditions are
  satisfied:
 \begin{enumerate}
 \item[(i)] $f_0=f$ and $f_a\in K$ for all $a\in \mc{A}$.
\item[(ii)] If $a$ is maximal in $\mc{A}$,  then $f_a=\pm e_k^*$
for some $k\in \N$.
 \item[(iii)] For every $a\in \mc{A}$ which is not maximal  denoting by
 $S_a$ the set of immediate successors of $a$ in $\mc{A}$ the following holds.
 There exists $j\in{\mathbb N}$  such that the family $(f_\beta)_{\beta\in
 S_a}$  is $\mc{A}_{n_j}$  admissible and
 $f_a=\frac{1}{m_j}\sum\limits_{\beta\in S_a}f_\beta$. In this case we say that
 $w(f_a)=m_j$.
 \end{enumerate}
 The order $o(f_a)$ for each $a\in\mc{A}$
 is also defined by backward induction as follows. If $f_a=\pm e_k^*$ then $o(f_a)=1$, while if
 $f_a=\frac{1}{m_j}\sum\limits_{\beta\in S_a}f_\beta$ then
 $o(f_a)=1+\max\{o(f_{\beta}:\;\beta\in S_a\}$.
 The order $o(T_f)$ of the aforementioned tree is defined to be equal to $o(f_0)$
  (where $0\in\mc{A}$ is the unique root
 of the tree $\mc{A}$).
  \end{definition}

 \begin{remark}\label{Nrem1}
 An easy inductive argument yields the following.
 \begin{enumerate}
 \item[(i)]   Every $f\in K$ admits a  tree, not necessarily unique.
  \item[(ii)] For every $\phi\in K$, if
 $\supp(\phi)=\{k_1<k_2<\cdots<k_d\}$ then for every
 $l_1<l_2<\cdots<l_d$ in $\N$ the functional
 $\psi=\sum\limits_{i=1}^d \phi(e_{k_i})e_{l_i}^*$ also belongs to
 the set $K$.
 \item[(iii)] For every $\phi\in K$ and every $E\subset \N$ the
 functional $E\phi$ also belongs  to the  set $K$.
 \item[(iv)] If  $\phi=\sum\limits_{i=1}^{\infty}a_ie_i\in K$,
  then for every choice of
 signs $(\e_i)_{i=1}^{\infty}$ the functional
 $\sum\limits_{i=1}^{\infty}\e_ia_ie_i$ also  belongs to $K$.
 \end{enumerate}
 \end{remark}

 \begin{definition}\label{Ndef26}
  The order
 $o(f)$ of an $f\in K$, is defined as
  \[ o(f)=\min\{o(T_f):\; T_f \mbox{ is a tree of }f\}.\]
 \end{definition}

 In general, given a symmetric subset $W$ of $c_{00}(\N)$ containing
 $\{\pm e_k^*:\;k\in\N\}$, the norm induced by $W$ on
 $c_{00}(\N)$ is defined as follows. For every $x\in c_{00}(\N)$,
 \[ \| x\|_W=\sup \{f(x):\; f\in W\}. \]
 In the case where $W=K=K[(m_j,n_j)_{j\in I}]$ for a given
 double sequence $(m_j,n_j)_{j\in I}$, the completion of
 the corresponding normed space $(c_{00}(\N),\|\cdot \|_K)$ is denoted
 by $ T[ ( \mc{A}_{n_j}, \frac{1}{m_j})_{j\in I}]$ and is called
 the mixed Tsirelson space defined by the family
 $(\mc{A}_{n_j},\frac{1}{m_j})_{j\in I}.$ The norming set $K$ is called the
  standard norming set of the space
 $ T[ ( \mc{A}_{n_j}, \frac{1}{m_j})_{j\in I}]$.

 \begin{remark}\label{Nrem2}
 \begin{enumerate}
 \item[(i)] As follows from Remark \ref{Nrem1}(iii), (iv),
  the Hamel basis \seq{e}{i} of
 $c_{00}(\N)$ is  a 1-unconditional Schauder basis for the
 space $ T[(\mc{A}_{n_j}, \frac{1}{m_j})_{j\in I}]$.
 \item[(ii)] If $x\in c_{00}(\N)$ with $\supp
 x=\{k_1<k_2<\cdots<k_d\}$ and $l_1<l_2<\cdots<l_d$ then the
 vector $y=\sum\limits_{i=1}^d e_{k_i}^*(x)e_{l_i}$ satisfies
 $\|x\|_K=\|y\|_K$, thus the basis \seq{e}{i} is subsymmetric.
 This is also a consequence of Remark
 \ref{Nrem1}.
 \end{enumerate}
 \end{remark}

 For the definition of the space $T_0$ and of the Hereditarily Indecomposable space
 $\eqs_d$  later,
  we shall use a specific choice of the sequences
 $(m_j)_{j\in\N}$ ,$(n_j)_{j\in\N}$ described in the next  definition.
 In the sequel  $(m_j)_{j\in\N}$, $(n_j)_{j\in\N}$ will always
 stand for  these sequences.

 \begin{definition}\label{Ndef4}[The sequences $(m_j)_{j\in\N}$ ,$(n_j)_{j\in\N}$ and the
 space $T_0$]\\
 We set $m_1=m_2=2$, and for $j\geq 3$ we define
 \[m_j=m_{j-1}^2=\prod\limits_{i=1}^{j-1}m_i.\]
  We choose a sequence $(n_j)_{j=1}^{\infty }$ as follows: $n_1\geq 2^3m_3$,
 and for every $j\geq 2$ we
 choose
 \[n_j\ge (4n_{j-1})^5\cdot m_j\]
 Observe, for later use, that $n_j\ge 2^{j+2}m_{j+2}$ while,
 setting  $p_j=n_1\cdot n_2\cdot
\ldots \cdot n_{j-1}$, we have that $n_j\ge jp_j$. We notice here
that the numbers $(p_j)_{j\ge 3}$ will play a key role in our
proofs.

 We set
\[T_0=T\left [\left ( {\mathcal A}_{n_j},\frac{1}{m_j}\right
)_{j=1}^{\infty }\right ]\] and we denote by $K_0$ the standard
norming set of $T_0$.
 \end{definition}

 Our aim is to prove that $T_0^{\ast }$ has a block sequence which
 generates a $c_0$ spreading model (Proposition \ref{Nprop1}). The main step of
 the proof is done in Lemma \ref{Nlem2}. For its proof we need to recall
 the definition of the modified Tsirelson spaces $T_M [ ({\mathcal
 A}_n,\theta_n)_{n\in I}]$. For a given (finite or infinite) subset
 $I$ of ${\mathbb N}$ and a sequence $(\theta_n)_{n\in I}$ in
 $(0,1)$, with $\lim\limits_{n\in I, n\rightarrow\infty }\theta_n=0$
 if $I$ is infinite,  the set $K_M=K_M[ ({\mathcal
 A}_n,\theta_n)_{n\in I}]$ is defined as follows:

 The set $K_M$ is the
 minimal subset of $c_{00}(\N)$ with the following properties:

\begin{enumerate}
\item[(i)] $\{\pm e_k^{* }    :k\in {\mathbb N}\}\subset K_M$.
\item[(ii)] For every $n\in I$, every $m\le n$ and every sequence
 $(\phi_k)_{k=1}^m$ in $K_M$ with pairwise disjoint supports,
 we have that
$\theta_n\big (\sum\limits_{k=1}^m\phi_k\big )\in K_M$.
\end{enumerate}

 We define the norm $\|\cdot\|_M$ on $c_{00}(\N)$ by the rule
 \[\| x\|_M=\sup\{  \phi(x) :\;\phi\in K_M\}\] for every $x\in c_{00}(\N)$.
 The space $T_M[ ({\mathcal A}_n,\theta_n)_{n\in I} ]$ is the
 completion of the space $(c_{00}(\N),\|\cdot\|_M)$.

 It is proved in \cite{BD} that a space of
 the form $X=T[({\mathcal A}_{n_i},\frac{1}{m_i})_{i=1}^{k}]$ is
 isomorphic to $\ell_p(\N)$ for some $1<p<\infty $ (or $c_0(\N)$). Under
 the condition that the sequence $\big ( \log_{m_i}(n_i)\big
 )_{i=1}^k$ is increasing (which is satisfied by the sequences
 $(m_i)$ and $(n_i)$ used in the definition of $T_0$) this $p$ is
 the conjugate exponent of $q=\log_{m_k}(n_k)$. In particular, it
 is shown in \cite{BD} that, for every $f\in c_{00}(\N)$, we have
 $\|f\|_q\le\|f\|_{X^*}$, where $\|\cdot\|_q$ denotes the
 norm of $\ell_q(\N)$.

Using the same argument (induction and H\"{o}lder's inequality)
one can also get the inequality $\|f\|_q\le\|f\|_{X_M^*}$ where
$\|\cdot\|_{X_M^*}$ is the norm of the dual of the modified space
$X_M=T_M[({\mathcal A}_{n_i},\frac{1}{m_i})_{i=1}^{k}]$. We note
for completeness that, using the obvious inequality
$\|f\|_{X_M^*}\le\|f\|_{X^*}$, we get that in fact $X_M$ is
isomorphic to $X$ (and $\ell_p(\N)$).

 \begin{lemma}\label{Nlem1}
 Let $j\in {\mathbb N}$, $j\geq 3$. We denote by $K_M(j-2)$ the
 norming set of the modified space
 $T_M[(\mc{A}_{n_i},\frac{1}{m_i})_{i=1}^{j-2}]$.
 Let $\phi\in
 K_M(j-2)$ be such that, for every $l\in \supp(\phi )$,
 we have that $\phi(e_l)>\frac{1}{m_j}$. Then,
 \[\#\supp(\phi )\le n_{j-1}.\]
 \end{lemma}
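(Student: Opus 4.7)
The plan is to invoke the $\ell_p$-type lower bound on the dual norm of the modified mixed Tsirelson space recalled in the paragraph immediately preceding the lemma. Set $X_M = T_M[(\mc{A}_{n_i},\tfrac{1}{m_i})_{i=1}^{j-2}]$ and let $q = \log_{m_{j-2}}(n_{j-2})$ be the conjugate exponent mentioned in \cite{BD}. I would use the inequality
\[ \|f\|_q \le \|f\|_{X_M^*} \quad\text{for every } f\in c_{00}(\N), \]
together with the fact that, by a routine induction on the construction of $K_M(j-2)$, every element of $K_M(j-2)$ lies in the unit ball of $X_M^*$. In particular, the given $\phi$ satisfies $\|\phi\|_q \le 1$.

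Next, let $d=\#\supp(\phi)$. The assumption $\phi(e_l)>1/m_j$ on the support gives
\[ 1 \ge \|\phi\|_q^q = \sum_{l\in\supp(\phi)} |\phi(e_l)|^q > \frac{d}{m_j^q}, \]
so $d < m_j^q$. At this point I would compute $m_j^q$ using the recursion: since $m_j=m_{j-1}^2=(m_{j-2}^2)^2=m_{j-2}^4$, we get $m_j^q = (m_{j-2}^q)^4 = n_{j-2}^4$. Hence $d < n_{j-2}^4$, and the growth condition $n_{j-1}\ge(4n_{j-2})^5 m_{j-1}$ from Definition \ref{Ndef4} yields $n_{j-1} > n_{j-2}^4 > d$, proving $\#\supp(\phi)\le n_{j-1}$.

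The only genuine bookkeeping issue — and the main thing to check carefully — is that the hypotheses of the $\ell_p$-bound from \cite{BD} are satisfied, namely that $(\log_{m_i}(n_i))_{i=1}^{j-2}$ is increasing so that the exponent $q=\log_{m_{j-2}}(n_{j-2})$ is indeed the relevant one. This follows directly from the definitions: for $i\ge 2$ we have $m_{i+1}=m_i^2$, so $\log_{m_{i+1}}(n_{i+1})=\tfrac{1}{2}\log_{m_i}(n_{i+1})$, and combining with $n_{i+1}\ge(4n_i)^5 m_{i+1}$ gives the crude bound $\log_{m_{i+1}}(n_{i+1})\ge\tfrac{5}{2}\log_{m_i}(n_i)+2$. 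Beyond verifying this monotonicity, the proof is simply an $\ell_q$/$\ell_\infty$ duality calculation with the right identification of $q$, and no serious obstacle is expected.
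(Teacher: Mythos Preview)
Your proof is correct and follows essentially the same route as the paper: both invoke the inequality $\|\phi\|_q\le\|\phi\|_{X_M^*}\le 1$ with $q=\log_{m_{j-2}}(n_{j-2})$, deduce $\#\supp(\phi)<m_j^q=n_{j-2}^4$, and finish with $n_{j-2}^4<n_{j-1}$. Your added verification of the monotonicity of $(\log_{m_i}(n_i))_i$ is a welcome detail the paper only asserts parenthetically; note that the step from $i=1$ to $i=2$ (where $m_1=m_2=2$) also needs a word, but it is immediate since $n_2>n_1$.
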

 \begin{proof}[\bf Proof.] For the space
 $X_M=T_M[({\mathcal A}_{n_i},\frac{1}{m_i})_{i=1}^{j-2}]$
 where $(m_i)_i$ and $(n_i)_i$ are as in the definition of $T_0$, the
 inequality $\|\phi\|_q\le\|\phi\|_{X_M^*}$ with
 $q=\log_{m_{j-2}}(n_{j-2})$ implies the following: If $\phi\in
 B_{X_M^{\ast }}$ and $\phi (e_l)>\frac{1}{m_j}$
 for every $l\in\supp(\phi)$, then
 \[\frac{(\#\supp(\phi ))^{1/q}}{m_j}<\|\phi \|_q\le \|\phi\|_{X_M^*}\le 1.\]
 Since $m_j=m_{j-2}^4$ and $n_{j-2}^4<n_{j-1}$, we
  get that $\#\supp(\phi )<m_j^q=(m_{j-2}^q)^4=n_{j-2}^4\le n_{j-1}$.
 \end{proof}

 \begin{lemma} \label{Nlem2} Let $j\ge 3$
 and let $k_1<k_2<\cdots<k_{p_j}$. Then
 \[ \|\frac{1}{p_j}\sum\limits_{i=1}^{p_j}e_{k_i}\|\le
 \frac{4}{m_j} .\]
 \end{lemma}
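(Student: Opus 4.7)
Plan. By the subsymmetry of the basis $(e_l)_{l\in\mathbb{N}}$ of $T_0$ (Remark \ref{Nrem2}(ii)), we may assume $k_i=i$ and reduce to showing $\|x\|_{T_0}\le 4/m_j$ for $x=\frac{1}{p_j}\sum_{i=1}^{p_j}e_i$; equivalently, $|f(x)|\le 4/m_j$ for every $f\in K_0$. Partition $\{1,\ldots,p_j\}=L_1\cup L_2$ with $L_2=\{l:|f(e_l)|>2/m_j\}$; the $L_1$ contribution to $|f(x)|$ is at most $2/m_j$ since $\|x\|_1=1$, so the problem reduces to the combinatorial estimate
\[\# L_2\;\le\;\frac{2p_j}{m_j}.\]

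Fix a tree $T_f=(f_a)_{a\in\mc{A}}$ of $f$. For each $l\in L_2$, the product of weights along the root-to-leaf path equals $1/|f(e_l)|<m_j/2=m_{j-1}^2/2$. Since any single weight is either $\le m_{j-2}$ or $\ge m_{j-1}$, this forces at most one node on the path to have weight $\ge m_{j-1}$, and such a node must have weight exactly $m_{j-1}$. Accordingly, split $L_2=L_2^A\sqcup L_2^B$ according to whether the path has no such ``cut node'' (type A) or exactly one (type B), and let $\mc{B}$ denote the set of cut nodes serving at least one $L_2^B$ leaf.

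The heart of the proof is to construct three functionals in the modified norming set $K_M(j-2)$ of $T_M[(\mc{A}_{n_i},1/m_i)_{i=1}^{j-2}]$ from $T_f$ via pruning. First, discarding all branches of $T_f$ through any node of weight $\ge m_{j-1}$ produces $\phi_A\in K[(\mc{A}_{n_i},1/m_i)_{i=1}^{j-2}]\subset K_M(j-2)$ with $|\phi_A(e_l)|=|f(e_l)|>2/m_j$ on $L_2^A$. Second, for each $b\in\mc{B}$ the subfunctional $f_b$ has a tree using only weights $\le m_{j-2}$, and writing $c_b$ for the signed product of reciprocal weights strictly above $b$, we have $|f_b(e_l)|=|f(e_l)|/|c_b|\ge|f(e_l)|>2/m_j$ on type B leaves below $b$. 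Third, pruning $T_f$ above the cut nodes (keeping only ancestors of members of $\mc{B}$ and treating each $b$ as a fresh coordinate $l_b$) gives $\psi\in K(j-2)\subset K_M(j-2)$ with $|\psi(e_{l_b})|=|c_b|$, and the constraint $|c_b|/m_{j-1}\cdot|c'_l|>2/m_j$ together with $|c'_l|\le 1$ forces $|c_b|>2/m_{j-1}$. Applying the $\ell_q$ estimate $\|\cdot\|_q\le\|\cdot\|_{X_M(j-2)^*}\le 1$ (with $q=\log_{m_{j-2}}(n_{j-2})$, as in the proof of Lemma \ref{Nlem1}) to each of $\phi_A$, $\psi$, and the $f_b$'s, after sign flips and support restrictions (Remark \ref{Nrem1}(iii),(iv)), and using $m_{j-2}^q=n_{j-2}$, yields $\#L_2^A\le n_{j-2}^4/2^q$, $\#\mc{B}\le n_{j-2}^2/2^q$, and $\#(L_2^B\text{ below each }b)\le n_{j-2}^4/2^q$, hence $\#L_2\le n_{j-2}^4/2^q+n_{j-2}^6/4^q$.

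It remains to verify $\#L_2\le 2p_j/m_j$, which follows from $p_j=n_1\cdots n_{j-1}$ and the growth conditions $n_i\ge(4n_{i-1})^5 m_i$, $n_i\ge 2^{i+2}m_{i+2}$: the single factor $n_{j-1}/m_j$ easily absorbs the powers of $n_{j-2}$ arising above. The main obstacle is executing the three tree-pruning constructions so that they genuinely land in $K_M(j-2)$, identifying the correct thresholds at which to apply the $\ell_q$ inequality in each case, and tracking the exponents of $n_{j-2}$ so that the concluding arithmetic is controlled by the rapid growth built into $(n_j)_j$.
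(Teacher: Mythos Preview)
Your reduction via subsymmetry and the split $L_1\cup L_2$ are fine, and your analysis of paths (at most one node of weight $\ge m_{j-1}$, necessarily equal to $m_{j-1}$) is correct. The constructions of $\phi_A$ and $\psi$ are also sound. The gap is in your treatment of $f_b$: it is simply false that ``the subfunctional $f_b$ has a tree using only weights $\le m_{j-2}$'', since $w(f_b)=m_{j-1}$ by definition of a cut node. To land in $K_M(j-2)$ you must pass to the children $f_\beta$, $\beta\in S_b$ (pruned below), and there are up to $n_{j-1}$ of them. The $\ell_q$ bound applied to each $\tilde f_\beta$ at threshold $2/m_{j-1}$ gives $\#\{l\in L_2^B\text{ below }\beta\}\le n_{j-2}^2/2^q$, hence only $\#\{L_2^B\text{ below }b\}\le n_{j-1}\cdot n_{j-2}^2/2^q$, not the $n_{j-2}^4/2^q$ you claim. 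Multiplying by your bound $\#\mc{B}\le n_{j-2}^2/2^q$ yields $\#L_2^B\le n_{j-1}\,n_{j-2}^4/4^q$, and the final inequality $\#L_2\le 2p_j/m_j$ then fails already for $j=4$ (it reduces to $8n_2\le n_1$, which is false).

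The paper's proof avoids precisely this loss. Rather than a two-type split, it partitions $\supp\phi$ into $j-1$ layers $B_{j-1},\ldots,B_1$ according to the largest missing weight index on each path, and---this is the key step you are missing---for the layer $B_{j-2}$ it does \emph{not} bound each cut node separately. Instead (the Claim in Step~2) it regroups the $s$-th child of every weight-$m_{j-1}$ node in $\mc{B}$ into a single disjointly supported functional $\phi_s\in K_M(j-3)$, yielding exactly $n_{j-1}$ functionals $\phi_1,\ldots,\phi_{n_{j-1}}$ in total, \emph{independent of $\#\mc{B}$}. This is what keeps the factor $n_{j-1}$ from being multiplied by $\#\mc{B}$, and the analogous regrouping is iterated in Step~3 for the deeper layers. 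Your framework of ``bound $\#\mc{B}$'' $\times$ ``bound per cut node'' cannot recover this; you would need to import the cross-node regrouping idea to make the arithmetic close.
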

 \begin{proof}[\bf Proof.]
 From the subsymmetricity of the basis \seq{e}{i} (Remark \ref{Nrem2}(ii))  it is enough to show that
 $\|\sum\limits_{l=1}^{p_j}e_l\|\le \frac{4 p_j}{m_j}$.
 Let $f\in K_0$; we shall show that
 $f(\sum\limits_{l=1}^{p_j}e_{l})\le \frac{4p_j}{m_j}$.
   We may assume that $f(e_l)\ge 0$ for all $l$
 (Remark \ref{Nrem1}(iv)).

  We set $D=\{l\in\supp(f):\;
 \phi(e_l)>\frac{1}{m_j}\}$ and we define
 $\phi =f|_D$ and $\psi =f|_{\N\setminus D}$.
 Since obviously $\psi(\sum\limits_{i=1}^{p_j}e_{i})\le
 \frac{p_j}{m_j}$ it is enough to show that
 $\phi(\sum\limits_{i=1}^{p_j}e_{i})\le
 \frac{3p_j}{m_j}$.

 Fix a tree analysis $T_{\phi }=(\phi_a)_{a\in\mc{A}}$ of the functional
 $\phi$. For
 every $l\in \supp(\phi )$
 we define the set
 $A_{l }=\{i:\;\exists a\in\mc{A}\mbox{ with }l\in\supp(f_a)\mbox{ and }w(f_a)=m_i\}$
 and for each $i\in A_l$ we denote by
  $d_{l,i}$ the cardinality of the set $\{a\in\mc{A}:\;l\in\supp(f_a)\mbox{ and }w(f_a)=m_i\}$.
  Then, for each $l\in \supp(\phi )$,
 \[\prod_{i\in
 A_l}\frac{1}{m_i^{d_{l,i}}}=\phi (e_l)>\frac{1}{m_j}.\]
 Thus we have that $\prod\limits_{i\in A_l}m_i^{d_{l,i}}<m_j$ which in
 conjunction to the fact that $d_{l,i}\ge 1$ for each $i\in A_l$
 and taking into account that $m_j=m_1\cdot m_2\cdot \ldots \cdot
 m_{j-1}$ we get the following:
 \begin{enumerate}
 \item[(1)] $A_l$ is a proper subset of $\{ 1,\ldots ,j-1\}$.
 \item[(2)] If $j-1\in A_l$, then $d_{l,j-1}=1$. In general, if
 $j-1,\ldots ,j-k\in A_l$, then $d_{l,j-1}=d_{l,j-2}=\cdots
 =d_{l,j-k}=1$.
 \end{enumerate}

 We partition the set $\supp(\phi )$ in the sets
 $(B_i)_{i=1}^{j-1}$ defined as follows. We set
  \[ B_{j-1}=\{l\in \supp(\phi ):\; j-1\notin A_l\},\]
 and for $k=2,\ldots ,j-1$, we set
 \[B_{j-k}=\{ l\in\supp(\phi ):\; j-i\in A_l
 \mbox{ for }1\le i<k\mbox{ and }j-k\not\in A_l\}.\]

  In the following
 three steps we estimate the action of $\phi $ on $B_{j-1}$,
 $B_{j-2}$ and (in the general case) on $B_{j-k}$.

\medskip

 \noindent {\bf Step 1}. The functional $\phi |_{B_{j-1}}$
 satisfies the assumptions of Lemma \ref{Nlem1}, hence
  \[|\phi(\sum\limits_{l\in B_{j-1}}e_i)|\le
     \#(\supp\big(\phi|_{B_{j-1}})\big)\le  n_{j-1}.\]
 \noindent {\bf Step 2.} Let $\phi'=\phi |_{B_{j-2}}$ and
 let $T_{\phi'}=(f_a)_{a\in \mc{A'}}$ be the restriction of the
 analysis $T_{\phi }$ on $B_{j-2}$. Then, for every $l\in
 \supp(\phi')=B_{j-2}$, there exists exactly one $a\in
 \mc{A'}$ such that $l\in \supp(f_a)$ and
 $w(f_a)=m_{j-1}$.

 \begin{claim} There exist disjointly supported functionals
 $(\phi_s)_{s=1}^{n_{j-1}}$ such that
 $$\phi^{\prime
 }=\frac{1}{m_{j-1}}\sum_{s=1}^{n_{j-1}}\phi_s$$ with
 $\phi_s\in K_M(j-3)$ for $1\le s\le n_{j-1}$.
 \end{claim}
 \begin{proof}[\bf  Proof of the Claim.]
  Let \[\mc{B}=\{a\in \mc{A'}:\; w(f_{a
 })=m_{j-1}\}.\] By the definition of $\phi^{\prime }$, the
 functionals $(f_{a })_{a\in\mc{B}}$, have pairwise  disjoint
 supports and
 \[\bigcup_{a\in {\mathcal B}}\supp(f_{a})=\supp(\phi^{\prime }).\]
  For each $a\in \mc{A'}$ we
 write
 \[f_{a}=\frac{1}{m_{j-1}}\sum_{\beta\in S_{a }}f_{\beta
 }=\frac{1}{m_{j-1}}\sum_{k=1}^{n_{j-1}}f_{\widehat{a k}},\]
 where $f_{\widehat{a k}}=0$ if $\#S_{a }<k\le n_{j-1}$.

We now build the disjointly supported functionals
$(\phi_s)_{s=1}^{n_{j-1}}$. We fix $s$ and we define inductively
the analysis $(f_{\gamma }^s)_{\gamma\in \mc{A'}}$ of $\phi_s$ as
follows: Let $\gamma\in \mc{A'}$ be a maximal node, i.e.
$f_{\gamma }=e^{* }_{k_{\gamma }}$. Then there exists a unique
$a\in \mc{B}$ such that $a \prec\gamma $. If $\gamma =\widehat{a
s}$ or $\widehat{a s}\prec\gamma $, then we set $f^s_{\gamma
}=f_{\gamma }$. Otherwise, we set $f^s_{\gamma }=0$.

Let now $\gamma\in \mc{A'}$, $\gamma $ not maximal, with
$f_{\gamma }=\frac{1}{m_r}\sum\limits_{\beta\in S_{\gamma
}}f_{\beta }$ and assume that $f^s_{\beta }$, $\beta\in S_{\gamma
}$, have been defined. If $\gamma\notin {\mathcal B}$ then we set
$f^s_{\gamma }=\frac{1}{m_r}\sum\limits_{\beta\in S_{\gamma
}}f^s_{\beta }$. If $\gamma\in {\mathcal B}$ then $f_{\gamma
}=\frac{1}{m_{j-1}}\sum\limits_{k=1}^{n_{j-1}}f_{\widehat{\gamma
k}}$ and we set $f^s_{\gamma }=f_{\widehat{\gamma s}}$.

This completes the inductive construction. It is now easy to check
that the functionals $\phi_s=f^s_0$, $s=1,\ldots ,n_{j-1}$,
(recall that $0\in\mc{A}$ is the unique root of the tree $\mc{A}$)
have the desired properties and this completes the proof of the
claim.
 \end{proof}
  Since $\phi'(e_l)=\phi(e_l)>\frac{1}{m_j}$ for each $l\in\supp(\phi')$
  it follows that for every $s$, $1\le s\le n_{j-1}$
   and every $l\in \supp(\phi_s)$, we have that
 \[\phi_s(e_l)>\frac{m_{j-1}}{m_j}=\frac{1}{m_{j-1}}.\]
 Thus, for every $s=1,\ldots ,n_{j-1}$, the functional $\phi_s$
 satisfies the assumptions of Lemma \ref{Nlem1}, with $j$ replaced by
 $j-1$, so
 \[\#\supp(\phi_s)\le n_{j-2}.\]
 It follows that
 \[\phi(\sum\limits_{l\in B_{j-2}}e_l)=\frac{1}{m_{j-1}}
 \sum\limits_{s=1}^{n_{j-1}}\phi_s(\sum\limits_{l\in
 \supp(\phi_s)}e_l)
\le\frac{1}{m_{j-1}}n_{j-1}n_{j-2}.\]

 \noindent {\bf Step 3.} Let $3\le k\le j-1$, set
 $\phi'=\phi|_{B_{j-k}}$, and let $T_{\phi'}=(f_a)_{a\in\mc{A'}}$ be the corresponding
analysis. Then, for every $l\in {\rm supp}(\phi^{\prime })$ and
for every $i=1,\ldots ,k-1$, there exists
 exactly one $a\in
 \mc{A'}$ such that $l\in \supp(f_a)$ and
 $w(f)=m_{j-i}$.
 As in Step 2, it follows by induction that we
can write
$$\phi^{\prime }=\frac{1}{m_{j-1}}\frac{1}{m_{j-2}}\cdots \frac{1}{m_{j-k+1}}
\left (\sum_{s=1}^{n_{j-k+1}\cdot\ldots\cdot n_{j-1}}\phi_s\right
),$$ where the functionals $(\phi_s)_{s=1}^{n_{j-k+1}\cdot\ldots
\cdot  n_{j-1}}$ have pairwise disjoint supports  for $1\le s\le
n_{j-k+1}\cdot\ldots \cdot  n_{j-1}$, $\phi_s\in K_M(j-k-1)$ while
for every $l\in \supp(\phi_s)$,
 \[\phi_s(e_l)>\frac{m_{j-1}\cdot\ldots\cdot
 m_{j-k+1}}{m_j}=\frac{1}{m_{j-k+1}}.\]
For every $s$, the functional $\phi_s$ satisfies the assumptions
of Lemma \ref{Nlem1} with $j$ replaced by $j-k+1$, so
 \[\#\supp(\phi_s)\le n_{j-k}.\]
It follows that
 \[\phi(\sum_{l\in B_{j-k}}e_l )\le
\frac{1}{m_{j-1}\cdot m_{j-2}\cdot\ldots\cdot m_{j-k+1}}\cdot
(n_{j-1}\cdot n_{j-2}\cdot\ldots\cdot n_{j-k+1})\cdot n_{j-k}.\]

\noindent We conclude that
\begin{eqnarray*}
 \phi (\sum_{l=1}^{p_j}e_l)&\le &
   \phi(\sum_{l\in B_{j-1}}e_l) +\cdots +
  \phi (\sum_{l\in B_1}e_l) \\
 &\le & n_{j-1}+\frac{1}{m_{j-1}}n_{j-1}n_{j-2}+\cdots +
 \frac{1}{m_{j-1}\cdot\ldots\cdot m_{j-k+1}}n_{j-1}\cdot\ldots\cdot n_{j-k}\\
 &&+\cdots+
 \frac{1}{m_{j-1}\cdot\ldots\cdot m_2}n_{j-1}\cdot\ldots\cdot n_1\\
 &=& n_{j-1}+\frac{1}{m_j}
 \big(\sum_{k=2}^{j-1}\frac{m_j}{m_{j-1}\cdot\ldots\cdot
   m_{j-k+1}}n_{j-1}\cdot\ldots\cdot n_{j-k}\big)\\
 &=& \frac{1}{m_j}\big(\sum_{k=1}^{j-1}m_{j-k+1}n_{j-1}\cdot\ldots\cdot n_{j-k}\big)\\
 && \hbox{(using the property}\; n_i\geq 2^{i+2}m_{i+2}\hbox{)}\\
 &\le & \frac{1}{m_j}
 \big(\sum_{k=1}^{j-2}\frac{1}{2^{j-k+1}}n_{j-k-1}
 n_{j-k}\cdots n_{j-1} \big)+\frac{m_2}{m_j}n_1\cdot\ldots\cdot n_{j-1}\\
 &\le & \frac{1}{m_j} \big(\sum_{k=1}^{j-2}\frac{1}{2^{j-k+1}}
  \big)
 p_j+\frac{2}{m_j}p_j\\
 &\le & \frac{3p_j}{m_j}.
 \end{eqnarray*}
 The proof of the lemma is complete.
 \end{proof}

 \begin{definition}\label{Ndef27}
 We say that a  sequence \seq{z}{n} in a  Banach space $Z$
 generates a $c_0$ spreading model provided that there exists a
 constant $C\ge 1$ such that for every $s\le k_1<k_2<\cdots<k_s$,
 the finite sequence $(z_{k_i})_{i=1}^s$ is $C$ equivalent to the
 standard basis of $\ell_{\infty}^n$.
 \end{definition}

 \begin{remark}\label{Nrem9}
 A sequence \seq{z}{n} generating a $c_0$ spreading model is
 necessarily weakly null. Indeed, assume the contrary. Then there
 exists $\e>0$, $f\in Z^*$ and $M$ an infinite sequence of $\N$
 such that $f(z_n)\ge \e$ for all $n\in M$. Choose
 $s>\frac{C}{\e}$ (where $C$ is the constant of the $c_0$
 spreading model) and $s\le k_1<k_2<\cdots<k_s$ with $k_i\in M$.
 Then from our assumption about the sequence \seq{z}{n} we get
 that $\|z_{k_1}+z_{k_2}+\cdots+z_{k_s}\|\le C$. On the other hand
 the action of the functional $f$ yields that
 $\|z_{k_1}+z_{k_2}+\cdots+z_{k_s}\|\ge
 \sum\limits_{i=1}^sf(z_{k_i})\ge s\e>C$, a contradiction.
 \end{remark}

 \begin{proposition} \label{Nprop1}
 There exists a block sequence in $T_0^*$
 which generates a $c_0$ spreading model.
 \end{proposition}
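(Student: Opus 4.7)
The plan is to exhibit an explicit block sequence $(\phi_j)_{j\ge 3}$ in $T_0^{\ast}$ and verify both sides of the $c_0$-spreading-model estimate. Define
\[
\phi_j \;=\; \frac{1}{m_j}\sum_{l\in F_j} e_l^{\ast}\qquad(j\ge 3),
\]
where $(F_j)_{j\ge 3}$ is a sequence of successive finite subsets of $\mathbb{N}$ with $\#F_j = p_j$. Since $p_j\le n_j$, the family $(e_l^{\ast})_{l\in F_j}$ is $\mathcal{A}_{n_j}$-admissible, so $\phi_j$ is the result of a single $(\mathcal{A}_{n_j},1/m_j)$-operation; in particular $\phi_j\in K_0$ and $\|\phi_j\|_{T_0^{\ast}}\le 1$. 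The sequence $(\phi_j)_{j\ge 3}$ is a block sequence in $T_0^{\ast}$.

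For the lower bound of the spreading model, I would fix $s\le k_1<\cdots<k_s$ and scalars $(a_i)_{i=1}^s$ with $|a_{i_0}|=\max_i|a_i|$, and pair $\sum_i a_i\phi_{k_i}$ with
\[
x\;=\;\operatorname{sgn}(a_{i_0})\,\frac{m_{k_{i_0}}}{p_{k_{i_0}}}\sum_{l\in F_{k_{i_0}}} e_l.
\]
Lemma \ref{Nlem2} gives $\|x\|_{T_0}\le 4$, and disjointness of the supports $F_{k_i}$ makes only the $i_0$-summand contribute, yielding $(\sum_i a_i\phi_{k_i})(x)=|a_{i_0}|$ and hence $\|\sum_i a_i\phi_{k_i}\|_{T_0^{\ast}}\ge \tfrac{1}{4}\max_i|a_i|$.

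The hard part is the upper estimate. Because the disjointly supported positive block sequence $(\phi_{k_i})$ is $1$-unconditional in $T_0^{\ast}$, it suffices to produce an absolute constant $C$ with $\|\sum_{i=1}^s\phi_{k_i}\|_{T_0^{\ast}}\le C$ whenever $s\le k_1<\cdots<k_s$, i.e.
\[
\sum_{i=1}^s\frac{1}{m_{k_i}}\sum_{l\in F_{k_i}}x(l)\;\le\; C\qquad\text{for every }x\ge 0,\ \|x\|_{T_0}\le 1.
\]
I expect this to be the main obstacle. Purely coordinate-wise estimates, or the use of $\mathcal{A}_{n_{k_1}}$-admissibility of $(F_{k_i})_{i=1}^s$ together with $\phi_{k_i}(x)\le \|F_{k_i}x\|_{T_0}$, only deliver a non-uniform bound of order $m_{k_1}$, and combining the $\phi_{k_i}$'s into a single $K_0$-functional via an $(\mathcal{A}_{n_j},1/m_j)$-operation costs a factor $m_j$ that is again not bounded. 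The sharper bound has to use Lemma \ref{Nlem2} in the form of the ``simultaneous evaluation'' discussed just before its statement: the averages $(m_j/p_j)\mathbf{1}_{F_j}$ are seen with magnitude at least $1$ by \emph{every} norm $\|\cdot\|_i$ with $i\le j$, which dually forces the weight $1/m_j$ of $\phi_j$ to absorb uniformly into each layer of the tree analysis of $x$. Concretely I would split the contributions $\phi_{k_i}(x)$ according to the tree of a norming functional for $x$, apply Lemma \ref{Nlem2} on each node to the union of those $F_{k_i}$'s handled at that level, and then sum a geometric series in $1/m_j$ which telescopes to an absolute constant.

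Putting the two bounds together, $(\phi_j)_{j\ge 3}$ is uniformly $C$-equivalent to the $\ell_{\infty}^{s}$-basis for every admissible choice $s\le k_1<\cdots<k_s$, hence generates a $c_0$ spreading model in $T_0^{\ast}$.
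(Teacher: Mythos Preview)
Your setup and the lower bound are exactly as in the paper. The gap is in the upper bound, where you explicitly flag the difficulty and then sketch a tree-analysis argument that is not carried out and whose details are unclear. In fact you considered and discarded the correct idea: the paper shows directly that for every $s\le j_1<\cdots<j_s$ the sum $\sum_{k=1}^s\phi_{j_k}$ \emph{belongs to $K_0$}, so its norm is at most $1$ with no further analysis needed.

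The point you missed is that the extra factor you feared does not appear, because of the multiplicative structure built into the sequences. Since $m_{j_k}=m_{j_1}\cdot m_{j_1}\cdot m_{j_1+1}\cdots m_{j_k-1}$ and $p_{j_k}=p_{j_1}\cdot n_{j_1}\cdots n_{j_k-1}$, one can write, for each $k\ge 2$,
\[
\phi_{j_k}=\frac{1}{m_{j_1}}\sum_{l=1}^{p_{j_1}}\psi_l^k,\qquad \psi_l^k=\frac{1}{m_{j_1}m_{j_1+1}\cdots m_{j_k-1}}\sum_{i\in G_l^k}e_i^{\ast},
\]
where $(G_l^k)_{l=1}^{p_{j_1}}$ is a partition of $F_{j_k}$ into successive sets of size $n_{j_1}\cdots n_{j_k-1}$; each $\psi_l^k$ lies in $K_0$ by iterated $(\mathcal{A}_{n_i},1/m_i)$ operations. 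Together with $\phi_{j_1}=\frac{1}{m_{j_1}}\sum_{i\in F_{j_1}}e_i^{\ast}$ one gets $\sum_{k=1}^s\phi_{j_k}=\frac{1}{m_{j_1}}\sum(\text{successive members of }K_0)$, and the total number of pieces is $p_{j_1}+(s-1)p_{j_1}=sp_{j_1}\le j_1p_{j_1}\le n_{j_1}$, so the whole sum is the result of a single $(\mathcal{A}_{n_{j_1}},1/m_{j_1})$ operation and hence lies in $K_0$. No tree analysis of a norming functional for $x$ is needed, and Lemma \ref{Nlem2} is used only for the lower bound.
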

 \begin{proof}[\bf Proof.] Let $(F_j)_{j=3}^{\infty }$ be a
sequence of successive subsets of $\N$ with $\#F_j=p_j$, for each
$j=3,4,\ldots $. For $j=3,4,\ldots$ we set \[\phi_j=\frac{1}{m_j}\sum_{k\in
F_j}e_k^*.\]

 Then $\phi_j\in K_{0}$, thus $\|\phi_j\|\le 1$, and
 \[\phi_j(\frac{1}{p_j}\sum_{k\in F_j}e_k)=\frac{1}{m_j}.\]
  From Lemma \ref{Nlem2},
 we get that
 \[\|\frac{1}{p_j}\sum_{k\in F_j}e_k\|\le\frac{4}{m_j}.\]
  It follows that, for every
 $j=3,4,\ldots $
 \[\frac{1}{4}\le\|\phi_j\|\le 1.\]
  We shall show that the
sequence $(\phi_j)_{j=3}^{\infty }$ generates a $c_0$ spreading
model.
 This is a direct consequence of the following:
 \begin{claim}
  For every $s\in \N$, $s\geq 3$, and every choice of indices $j_1<j_2<\cdots <j_s$ with $s\le
 j_1$, the functional $\sum\limits_{k=1}^s\phi_{j_k}$ belongs to
 $K_{0}$.
 \end{claim}
  \begin{proof}[\bf Proof of the Claim.] Fix $s$ and $j_1<j_2<\cdots
<j_s\in \N$ with $s\le j_1$. For every $k=2,3,\ldots ,s$, we write
 \[\phi_{j_k}=\frac{1}{m_{j_k}}\sum_{i\in
 F_{j_k}}e_i^*= \frac{1}{m_{j_1}}\frac{1}{m_{j_1}\cdot
 m_{j_1+1}\cdot\ldots\cdot m_{j_k-1}}\sum_{i\in F_{j_k}}e_i^*.\]
   Since
 \[\#F_{j_k}=p_{j_k}=n_1\cdot\ldots\cdot n_{j_1-1}\cdot
 n_{j_1}\cdot\ldots\cdot n_{j_k-1}= p_{j_1}\cdot
 (n_{j_1}\cdot n_{j_1+1}\cdot\ldots\cdot n_{j_k-1}),\] we can
 partition the set  $F_{j_k}$ into $p_{j_1}$ successive subsets $(G_l^k)_{l=1}^{p_{j_1}}$
 where $\#G^k_l= n_{j_1}\cdot n_{j_1+1}\cdot\ldots\cdot n_{j_k-1}$ for every $l=1,\ldots ,p_{j_1}$.

 Then, for every $l=1,\ldots ,p_{j_1}$, the functional
 \[\psi_l^k=\frac{1}{m_{j_1}\cdot m_{j_1+1}\cdot\ldots\cdot m_{j_k-1}}\sum_{i\in
 G_l^k}e_i^{\ast }\] belongs to $K_{0}$. It follows that, for every
 $k=2,\ldots ,s$, we can write
 \[\phi_{j_k}=\frac{1}{m_{j_1}}\sum_{l=1}^{p_{j_1}}\psi_l^k\]
  where $\supp(\psi_1^k)
 < \supp(\psi_2^k) <\cdots < \supp(\psi_{p_{j_1}}^k)$ and
 $\psi_l^k\in K_{0}$ for every $l=1,\ldots ,p_{j_1}$. Since $s\le
 j_1$ and $sp_{j_1}\le j_1p_{j_1}\le n_{j_1}$, we get that the
 functional
 \[\phi =\sum_{k=1}^s\phi_{j_k}=\frac{1}{m_{j_1}}
 \left (\sum_{i\in F_{j_1}}e_i^{\ast }+\sum_{k=2}^s\left
 (\sum_{l=1}^{p_{j_1}}\psi_l^k\right )\right )\] belongs to
 $K_{0}$. This completes the proof of the Claim.
 \end{proof}
 The proof of the claim finishes, as we have mentioned earlier,
 the proof of the proposition.
 \end{proof}

\section{Strictly singular non-compact operators on $T_0$}\label{Nsec88}

 The main step in other examples, where strictly singular non-compact operators are
 produced on Hereditarily Indecomposable Banach spaces (e.g. \cite{AnSc}, \cite{Ga}),
 is the contsruction of strictly singular non-compact operators on the mixed Tsirelson
 spaces which are the unconditional frames of those space.
  In this section, we show how the existence of a sequence
 generating a $c_0$ spreading model
 in $T_0^*$ (Proposition \ref{Nprop1}),
 leads to strictly singular non-compact operators on $T_0$.
  In Proposition \ref{Nprop12}, which is of general nature,
  we prove how the existence of a $c_0$ spreading model in $X^*$
  leads to strictly singular non-compact operators $T:X\to Y$ for
  certain spaces $Y$, and then we apply this proposition to obtain the aforementioned
  result.

  We also notice that it is not known whether   each mixed Tsirelson space which is arbitrarily
 distortable admits a strictly singular non-compact operator.

 \begin{proposition}\label{Nprop12}
 Let $X,Y$ be a pair of Banach spaces such that
 \begin{enumerate}
 \item[(i)] There exists a sequence $(x_n^*)_{n\in\N}$ in $X^*$
 generating a $c_0$ spreading model.
 \item[(ii)] The space $Y$ has a normalized Schauder basis
 \seq{e}{n} and there exists a norming set $D$ of $Y$ (i.e.
 $D\subset Y^*$ and $\|y\|=\sup\{f(y):\;f\in D\}$ for every $y\in
 Y$), such that for every $\e>0$ there exists $M_{\e}\in\N$ such
 that for every $f\in D$, \[\#\{n\in\N:\; |f(e_n)|>\e\}\le
 M_{\e}.\]
 \end{enumerate}
 Then there exists a strictly increasing sequence of integers
 \seq{q}{n} such that the operator $T:X\to Y$ defined by the rule
 \[T(x)=\sum\limits_{n=1}^{\infty}x^*_{q_n}(x)e_n\] is
 bounded and non-compact.
 \end{proposition}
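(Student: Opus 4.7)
The plan is to pick $(q_n)$ growing fast enough so that the $c_0$ spreading model of $(x_n^*)$ applies to the tail sums appearing in the adjoint, to use a level decomposition of the coefficients $f(e_n)$ (via hypothesis~(ii)) to bound the relevant functionals uniformly over $f$ in the norming set $D$, and finally to derive non-compactness by computing the adjoint of $T$ on the biorthogonal functionals of $(e_n)$. I would begin by reducing to the case $D\subset B_{Y^*}$ (symmetric) and let $C$ denote the spreading model constant; setting $M_k:=M_{1/2^k}$ and enlarging if necessary, I assume $(M_k)$ is nondecreasing. Then I pick $(q_n)$ strictly increasing with $q_n\ge M_{n+1}$ for every $n$.

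For each $f\in D$, partition the support of $f$ into levels $A_k(f)=\{n:\,1/2^{k+1}<|f(e_n)|\le 1/2^k\}$, so that $\#A_k(f)\le M_{k+1}$. For boundedness I would estimate the functional $g_{M,N}(f)=\sum_{n=M+1}^N f(e_n)x^*_{q_n}\in X^*$ by summing over its level-$k$ pieces; at each level I split $A_k(f)\cap(M,N]$ into a head $\{n\le k\}$, contributing at most $kC/2^k$ to the norm via $\|x_n^*\|\le C$ (the $s=1$ case of Definition~\ref{Ndef27}), and a tail $\{n>k\}$, whose smallest index $n_1$ satisfies $q_{n_1}\ge q_{k+1}\ge M_{k+1}\ge\#(\text{tail})$, so the $c_0$ spreading model of Definition~\ref{Ndef27} applies and gives a norm at most $C/2^k$. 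Summing over $k$ yields $\|g_{M,N}(f)\|_{X^*}\le\sum_{k\ge 0}C(k+1)/2^k$, a universal constant, so $\|T_Nx-T_Mx\|_Y\le C'\|x\|$. To upgrade this to Cauchy-ness, fix $\varepsilon>0$, choose $k_0$ with $\sum_{k\ge k_0}C(k+1)/2^k<\varepsilon/(2\|x\|)$, and note that the low levels $k<k_0$ together use only indices in $\{n:|f(e_n)|>1/2^{k_0}\}$, a set of size at most $M_{k_0}$. Since $(x_n^*)$ is weakly null by Remark~\ref{Nrem9}, $x^*_{q_n}(x)\to 0$; so for $M$ large enough the low-level contribution to $g_{M,N}(f)(x)$ is $<\varepsilon/2$ uniformly in $f\in D$, which together with the high-level estimate forces $(T_Nx)$ to be Cauchy in $Y$ and defines $Tx\in Y$ with $\|T\|\le C'$.

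For non-compactness I would pass to the adjoint: the biorthogonal functionals $(e_n^*)$ are bounded in $Y^*$ and satisfy $T^*e_n^*=x^*_{q_n}$, since $(T^*e_n^*)(x)=e_n^*(Tx)=x^*_{q_n}(x)$. If $T$ were compact, Schauder's theorem would make $T^*$ compact, forcing $(x^*_{q_n})$ to have a norm-convergent subsequence; but by Remark~\ref{Nrem9} the sequence $(x_n^*)$ is weakly null, so the only possible norm limit is $0$, contradicting the lower bound $\|x_n^*\|\ge 1/C$ that again comes from taking $s=1$ in Definition~\ref{Ndef27}. The main technical obstacle I foresee is the Cauchy convergence of $(T_Nx)$ rather than mere boundedness, since the spreading model alone gives uniform norm bounds on $g_{M,N}(f)$ but these do not go to $0$ with $M$; the saving feature is that only $M_{k_0}$ indices can carry values $|f(e_n)|>2^{-k_0}$ for any fixed $f$, so the pointwise vanishing $x^*_{q_n}(x)\to 0$ forces the low-level part of $g_{M,N}(f)(x)$ to vanish as $M\to\infty$.
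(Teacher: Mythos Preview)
Your proof is correct. The boundedness argument is essentially the paper's: you use the same level decomposition of the coefficients $f(e_n)$ and the same head/tail split at each level, the only cosmetic difference being that you take $\theta_j=2^{-j}$ while the paper works with a general summable sequence $(j\theta_j)_j$. You are in fact more careful than the paper on one point: you explicitly argue that the partial sums $T_Nx$ are Cauchy in $Y$, using that only boundedly many indices (independent of $f$) can lie in the low levels together with the pointwise vanishing $x^*_{q_n}(x)\to 0$; the paper glosses over this and only bounds $|f(T_Nx)|$ uniformly.

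Your non-compactness argument is genuinely different. The paper first invokes the Johnson--Rosenthal theorem to pass to a $w^*$-basic subsequence of $(x_n^*)$, obtains a bounded biorthogonal sequence $(x_n)\subset X$, and then checks directly that $\|Tx_{q_i}-Tx_{q_j}\|=\|e_i-e_j\|\ge 1/(2K)$. You bypass Johnson--Rosenthal entirely by working on the adjoint side: since $T^*e_n^*=x^*_{q_n}$, compactness of $T$ would (via Schauder) force a norm-convergent subsequence of $(x^*_{q_n})$, which is impossible because the sequence is weakly null yet bounded below in norm by $1/C$. This is more elementary and avoids passing to a subsequence before choosing $(q_n)$; the paper's route, on the other hand, exhibits explicit vectors in $X$ witnessing non-compactness, which can be useful in applications where one wants concrete preimages.
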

 \begin{proof}[\bf Proof.] Since the sequence $(x_n^*)_{n\in\N}$
 generates a $c_0$ spreading model  it is weakly null, hence, since it belongs to
 a dual space
  is  also $w^*$
 null. From a result of W. B. Johnson and H. P. Rosenthal
 (\cite{JR}), passing to a subsequence we may assume that
 $(x_n^*)_{n\in\N}$ is a $w^*-$ basic sequence. In particular
 there exists a bounded sequence $(x_n)_{n\in\N}$ in $X$ such that
 $(x_n,x_n^*)_{n\in\N}$ are biorthogonal (i.e.
 $x_i^*(x_j)=\delta_{ij}$ for each $i,j$).

 We select $\seq{\theta}{j}$ a strictly decreasing sequence of
 positive reals, with $\theta_1=1$, such that
 $\sum\limits_{j=1}^{\infty}j\theta_j<\infty$. From our assumption
 (ii) we may select a strictly increasing sequence \seq{q}{n} in
 $\N$ such that for every $j\in\N$ and every $f\in D$,
 \[\#\{n\in\N:\; |f(e_n)|>\theta_{j+1} \}\le q_j.\]
 We claim that the operator $T:X\to Y$ defined by the rule
 $T(x)=\sum\limits_{n=1}^{\infty}x^*_{q_n}(x)e_n$ is bounded and
 non-compact.

 We first show the boundedness of the operator $T$. Let $x\in X$
 and $f\in D$. For each $j$ we set
 \[B_j=\{n\in\N:\; \theta_{j+1}<|f(e_n)|\le \theta_j\}.\]
 From the definition of the sequence \seq{q}{j} it follows that
 $\#(B_j)\le q_j$. We partition each set $B_j$ in the following
 way:
 \[  C_j=\{n\in B_j:\;n\ge j\}\quad\mbox{ and }\quad D_j=\{n\in B_j:\;n< j\}\]
 Obviously $\#(D_j)\le j-1$. Since also
 \[ \#\{q_n:\;n\in C_j\}=\#(C_j)\le \#(B_j)\le q_j\le\min\{q_n:\;
 n\in C_j\},\]
 using our assumption (i), it follows that $\sum\limits_{n\in
 C_j}|x_{q_n}^*(x)|\le C\|x\|$, where $C$ is the constant of the
 $c_0$ spreading model.
  Thus for each $j$,
  \begin{eqnarray*}
  \sum\limits_{n\in B_j} |f(e_n)|\cdot |x^*_{q_n}(x)|&=&
  \sum\limits_{n\in C_j} |f(e_n)|\cdot |x^*_{q_n}(x)|+
  \sum\limits_{n\in D_j} |f(e_n)|\cdot |x^*_{q_n}(x)|\\
  & \le & \theta_j C\|x\|+ \theta_j(j-1) C\|x\|=j\theta_j C\|x\|.
  \end{eqnarray*}
 It follows that
  \begin{eqnarray*}
  |f(\sum\limits_{n=1}^{\infty}x_{q_n}^*(x)e_n)|&\le &
  \sum\limits_{n=1}^{\infty}|f(e_n)|\cdot |x^*_{q_n}(x)|\\
  & \le &   \sum\limits_{j=1}^{\infty}\sum\limits_{n\in B_j} |f(e_n)|\cdot
  |x^*_{q_n}(x)|\\
  &\le& C(\sum\limits_{j=1}^{\infty}j\theta_j)\|x\|.
  \end{eqnarray*}
  Therefore the operator $T$ is bounded with $\|T\|\le
  C(\sum\limits_{j=1}^{\infty}j\theta_j)$.

 Finally we prove that the operator $T$ is non-compact.
 The sequence $(x_n)_{n\in\N}$ is bounded, while from the
  biorthogonality of sequence $(x_n,x_n^*)_{n\in\N}$
 it follows that for $i<j$,
 \[
 \|Tx_{q_i}-Tx_{q_j}\|=
 \|\sum\limits_{n=1}^{\infty}x_{q_n}^*(x_{q_i}-x_{q_j})e_n\|=
 \|e_i-e_j\|\ge \frac{1}{2K}\] where $K$ is the basis constant of
 \seq{e}{n}. Therefore $T$ is a non-compact operator.
 \end{proof}

 \begin{proposition}
 There exists a strictly singular non-compact operator $S:T_0\to
 T_0$.
 \end{proposition}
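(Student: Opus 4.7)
The plan is to apply Proposition~\ref{Nprop12} with $X=Y=T_0$ and with $D$ the standard norming set $K_0$. Hypothesis (i) is provided directly by Proposition~\ref{Nprop1}, which supplies the sequence $\phi_j=\frac{1}{m_j}\sum_{k\in F_j}e_k^*$ generating a $c_0$ spreading model in $T_0^*$. The real work is checking hypothesis (ii): uniformly in $f\in K_0$, the set $\{n:|f(e_n)|>\varepsilon\}$ must have size bounded by a constant $M_\varepsilon$.

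For this uniform bound, I would fix $f\in K_0$ and a tree analysis $T_f=(f_a)_{a\in\mathcal{A}}$ of $f$. By Definition~\ref{Ndef3} the children at every internal node are successively supported, so each leaf $\pm e_n^*$ is reached by a unique path from the root, and the coefficient satisfies $|f(e_n)|=\prod_i 1/m_{j_i}$ where $m_{j_i}$ are the weights along the path. The inequality $|f(e_n)|>\varepsilon$ forces $\prod_i m_{j_i}<1/\varepsilon$, hence both that the length of the path is at most $\log_2(1/\varepsilon)$ (because each $m_{j_i}\ge m_1=2$) and that every weight along the path satisfies $m_{j_i}<1/\varepsilon$, i.e.\ $j_i\le J_\varepsilon$ where $J_\varepsilon$ is determined by $m_{J_\varepsilon}\le 1/\varepsilon<m_{J_\varepsilon+1}$. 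Pruning $T_f$ to keep only paths meeting these constraints gives a tree of depth $\le\log_2(1/\varepsilon)$ whose internal nodes have branching $\le n_{J_\varepsilon}$. Consequently $\#\{n:|f(e_n)|>\varepsilon\}\le n_{J_\varepsilon}^{\log_2(1/\varepsilon)}$, a bound depending only on $\varepsilon$. This establishes (ii).

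Applying Proposition~\ref{Nprop12} we obtain a strictly increasing sequence $(q_n)$ and a bounded non-compact operator $Sx=\sum_n\phi_{q_n}(x)e_n$ from $T_0$ to $T_0$. It remains to upgrade non-compactness to strict singularity. Since the basis of $T_0$ is unconditional, every infinite-dimensional subspace contains a normalized block sequence, and it suffices to show that $S$ fails to be bounded below on every block subspace $Y\subset T_0$. Using subsymmetricity of $(e_\ell)$ together with Lemma~\ref{Nlem2}, I would select rapidly-growing block vectors $(z_j)$ in $Y$ whose $p_j$-averages $w_j=\frac{1}{p_j}\sum_{k=1}^{p_j}z_{j,k}$ still obey the Tsirelson-type upper estimate $\|w_j\|\le C/m_j\to 0$, while the $(z_{j,k})$ are separated enough that $\|S w_j\|$ is witnessed by a genuine normalized vector. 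Since $S$ is bounded, $\|Sw_j\|\to 0$ as well, while normalizing $w_j$ produces vectors in $Y$ on which $S$ has arbitrarily small norm, so $S|_Y$ is not bounded below.

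The main obstacle is the tree-analysis bound of condition (ii): once it is in hand the application of Proposition~\ref{Nprop12} is automatic, and strict singularity follows from the standard mixed-Tsirelson averaging machinery already developed in Section~\ref{Nsec1}.
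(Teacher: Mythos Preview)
Your verification of condition (ii) for $D=K_0$ via the tree count is correct, and Proposition~\ref{Nprop12} then does yield a bounded non-compact $S:T_0\to T_0$. The route differs from the paper's, which applies Proposition~\ref{Nprop12} with target the auxiliary space $T_0'=T[(\mathcal{A}_{n_{j+1}},\tfrac{1}{m_j})_j]$ rather than $T_0$ itself, and then composes with the formal identity $I:T_0'\to T_0$ (bounded because $K_0\subset K_0'$). The gain of that detour is that strict singularity becomes a property of $I$, provable by the standard R.I.S.\ comparison: in any block subspace of $T_0'$ one builds an $n_{j+1}$-average $y$ of a R.I.S.\ with $\|y\|_{T_0'}\ge 1/m_j$ (the operation $(\mathcal{A}_{n_{j+1}},\tfrac{1}{m_j})$ is available in $K_0'$) but $\|Iy\|_{T_0}\le 6/m_{j+1}$, so $\|Iy\|/\|y\|\le 6/m_j\to 0$. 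The composition $S=I\circ T$ is then strictly singular because $I$ is, and non-compact by direct inspection on the biorthogonal vectors.

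Your direct attempt at strict singularity, by contrast, has a genuine gap. Even granting the estimate $\|w_j\|\le C/m_j$ for $p_j$-averages of blocks in $Y$ (which would itself require a basic-inequality reduction to Lemma~\ref{Nlem2}, not subsymmetricity alone---Lemma~\ref{Nlem2} is about basis vectors), the final step is a non sequitur: from $\|w_j\|\to 0$ and $S$ bounded you only get $\|Sw_j\|\le\|S\|\,\|w_j\|$, hence $\|S(w_j/\|w_j\|)\|\le\|S\|$, which is no information at all. To show $S|_Y$ is not bounded below you need $\|Sw\|/\|w\|\to 0$ along some sequence in $Y$, i.e.\ a \emph{relative} estimate comparing two norms on the same vector, and nothing in your argument produces one. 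The paper's factorization through $T_0'$ is precisely the device that manufactures this ratio.
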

 \begin{proof}[\bf Proof.]
 Let $X,Y$ denote the spaces
 $X=T_0=T[(\mc{A}_{n_j},\frac{1}{m_j})_{j\in\N}]$,
 $Y=T_0'=T[(\mc{A}_{n_{j+1}},\frac{1}{m_j})_{j\in\N}]$
 respectively
 and let $K_0$, $K_0'$ be their standard norming sets.
 From Proposition \ref{Nprop1}
 there exists a block sequence $(x_n^*)_{n\in\N}$ in $X^*=T_0^*$
 which generates a $c_0$ spreading model.
 We also select a bounded block sequence \seq{x}{n} in $T_0$
 with $\ran x_n=\ran x_n^*$ such that $x_n^*(x_n)=1$.
  The standard basis \seq{e}{n} is a normalized Schauder basis of
 the space $Y$, while  for every $j$ and for
 every $\phi\in K_0'$ it holds that
 \[ \#\{n\in\N:\;  |\phi(e_n)|>\frac{1}{m_j}\}\le
 (n_j)^{2}\]
 (the proof of this statement follows similarly with
  those of Lemma \ref{Nlem1} and of
  the claim in the proof of Lemma \ref{Nlem5}).
  Proposition \ref{Nprop12} yields the existence of a strictly
 increasing sequence of integers
 \seq{q}{n} such that the operator $T:T_0\to T_0'$ defined by the rule
 \[T(x)=\sum\limits_{n=1}^{\infty}x^*_{q_n}(x)e_n\] is
 bounded.

   Since
 the norming set $K_0$  of $T_0$ is a subset of the norming set
 $K_0'$ of $T_0'$, the formal identity map
 $I:T_0'\to T_0$ defines a bounded linear operator.
 We show that the operator $I$ is strictly singular.
 Let $Y_1$ be any block subspace of $T_0'$ and let $j\in\N$.
 We may select a block sequence $(y_i)_{i=1}^{n_{j+1}}$ in $Y_1$
 such that the sequence $(Iy_i)_{i=1}^{n_{j+1}}$ is a
 $(3,\frac{1}{m_{j+1}^2})$ R.I.S. in $T_0$ with $\|Iy_i\|_{T_0}\ge 1$,
 and thus $\|y_i\|_{T_0'}\ge 1$. (The technical details for the above
 argument  and the
 definition of R.I.S. are similar to those of Definition \ref{Ndef8}, Lemmas
 \ref{Nlem6}, \ref{Nlem7}, \ref{Nlem8} and Proposition
 \ref{Nprop6}.)
 From the analogue of Proposition \ref{Nprop7} for the space $T_0$
 it follows that
 \[\left\|\frac{Iy_1+Iy_2+\cdots+Iy_{n_{j+1}}}{n_{j+1}}\right\|_{T_0}\le\frac{6}{m_{j+1}}.\]
 On the other hand, selecting $f_i\in K_0'$ with $\ran f_i\subset
 \ran y_i$ and $f_i(y_i)\ge 1$ for $i=1,2,\ldots,n_{j+1}$, the
 functional
 \[  f=\frac{1}{m_j}(f_1+f_2+\cdots+f_{n_{j+1}})  \]
 belongs to the norming set $K_0'$, while its action  yields that
 \[\left\|\frac{y_1+y_2+\cdots+y_{n_{j+1}}}{n_{j+1}}\right\|_{T_0'}\ge\frac{1}{m_{j}}.\]
 Therefore the vector
 $y=\frac{1}{n_{j+1}}\sum\limits_{i=1}^{n_{j+1}}y_i$ belongs to
 the subspace $Y_1$ and
 \[
 \frac{\|Iy\|_{T_0}}{\|y\|_{T_0'}}\le\frac{\frac{6}{m_{j+1}}}{\frac{1}{m_j}}=\frac{6}{m_j}.\]
 Since this procedure may be done for arbitrarily large $j$,  it
 follows that the operator $I$ is strictly singular.

 We define the operator $S:T_0\to T_0$  as the composition
 $S=I\circ T$. The operator $S$ is strictly singular (as $I$ is).
 It is also non-compact, since for the bounded sequence
 $(x_{q_n})_{n\in\N}$ it holds that
 for all $i\neq j$ we have that
 \[  \|S(x_{q_i})-S(x_{q_j})\|_{T_0}=\|e_i-e_j\|_{T_0}=1.\]
 \end{proof}

 \section{The HI space $\eqs_{d}$}\label{Nsec2}

 In this section we  define the space $\eqs_{d}$ and we
 show that it is Hereditarily Indecomposable.
  The unconditional frame we use in the construction of the space $\eqs_{d}$ is the space
  $T_0$ we have constructed in section \ref{Nsec1}.
  For the definition of
 $\eqs_{d}$ we define a Gowers-Maurey type coding function $\sigma$
 and we  define the $n_{2j-1}$ special sequences.

  \begin{definition}\label{Ndef6}[The space $\eqs_{d}$.]
 Let the sequences $(m_j)_{j=1}^{\infty}$, $(n_j)_{j=1}^{\infty}$
 be as in Definition \ref{Ndef4}.
  The set $K_d$  is the minimal subset of $c_{00}(\N)$ satisfying the
  following conditions.
   \begin{enumerate}
 \item[(i)] $\{\pm e_k^*:\;k\in\N\}\subset K_d$.
 \item[(ii)] $K_d$ is  symmetric (i.e. if $f\in K_d$ then  $-f\in K_d$).
 \item[(iii)] $K_d$ is closed under the restriction of its elements
 on intervals of $\N$ (i.e. if $f\in K_d$ and $E$ is an interval of
 $\N$ then $Ef\in K_d$).
 \item[(iv)] For every $j\in \N$, $K_d$ is closed under the $(\mc{A}_{n_{j}},\frac{1}{m_{j}})$
 operation.
 \item[(v)] For every $j\ge 2$, $K_d$ is closed under the
  $(\mc{A}_{n_{2j-1}},\frac{1}{\sqrt{m_{2j-1}}})$
 operation on $n_{2j-1} $  special sequences, i.e. for every $n_{2j-1}$ special sequence\\
 $(f_1,f_2,\ldots,f_{n_{2j-1}})$, with $f_i\in K_d$ for $1\le i\le n_{2j-1}$,  the functional
 $f=\frac{1}{\sqrt{m_{2j-1}}}(f_1+f_2+\cdots+f_{n_{2j-1}})$ also
 belongs to $K_d$.
 \end{enumerate}
 The space $\eqs_d$ is the completion of $(c_{00}(\N),\|\cdot\|_{K_d})$.
 \end{definition}
 The above definition is not complete because we have not yet
 defined the $n_{2j-1}$ special sequences.

  \begin{definition}\label{Ndef5}[The coding function $\sigma$
 and the $n_{2j-1}$ special sequences.]
 Let $\Q_s$ denote the set of all finite sequences
 $(\phi_1,\phi_2,\ldots,\phi_d)$ such that
 $\phi_i\in c_{00}(N)$, $\phi_i\neq 0$ with $\phi_i(n)\in \Q$ for all $i,n$ and
 $\phi_1<\phi_2<\cdots<\phi_d$.
 We fix a pair $\Omega_1,\Omega_2$ of disjoint infinite subsets of
 $\N$.
 From the fact that $\Q_s$ is
 countable we are able to define an injective
 coding function
 $\sigma:\Q_s\to \{2j:\;j\in \Omega_2\}$ such that
 $m_{\sigma(\phi_1,\phi_2,\ldots,\phi_d)}>\max\{\frac{1}{|\phi_i(e_l)|}:\;l\in\supp
 \phi_i,\;i=1,\ldots,d\}\cdot\max\supp \phi_d$.

  Let $j\in \N$. A finite
 sequence $(f_i)_{i=1}^{n_{2j-1}}$ is said to be an
  $n_{2j-1}$ special sequence   provided that
 \begin{enumerate}
 \item[(i)]  $(f_1,f_2,\ldots,f_{n_{2j-1}})\in\Q_s$ and $f_i\in K_d$
 for $i=1,2,\ldots,n_{2j-1}$.
 \item[(ii)] The functional $f_1$ is the result of an
 $(\mc{A}_{n_{2k}},\frac{1}{m_{2k}})$ operation, on a family of functionals belonging to
  of $K_d$, for some for some $k\in \Omega_1$ such that $m_{2k}^{1/2}>n_{2j-1}$
  and for each $1\le i<n_{2j-1}$ the functional $f_{i+1}$ is the
  result of an
  $(\mc{A}_{n_{\sigma(f_1,\cdots,f_i)}},\frac{1}{m_{\sigma(f_1,\cdots,f_i)}})$
   operation on a family of functionals belonging to $K_d$.
  \end{enumerate}
 \end{definition}

 As we have mentioned earlier the weight $w(f)$ of a functional
 $f\in K_d$
  is not unique. However, when we refer to an $n_{2j-1}$ special sequence
 $(f_i)_{i=1}^{n_{2j-1}}$  then, for $2\le i\le n_{2j-1}$,  by
 $w(f_i)$
 we shall always mean  $w(f_i)=m_{\sigma(f_1,\ldots,f_{i-1})}$.

 \begin{proposition}\label{Nprop2}[The tree-like property of $n_{2j-1}$ special
 sequences]\label{treelike}
 Let $\Phi=(\phi_i)_{i=1}^{n_{2j-1}}$, $\Psi=(\psi_i)_{i=1}^{n_{2j-1}}$
 be a pair of distinct $n_{2j-1}$ special sequences. Then
 \begin{enumerate}
 \item[(i)] For $1\le i<l\le n_{2j-1}$ we have that
  $w(\phi_i)\neq  w(\psi_l)$.
 \item[(ii)] There exists $k_{\Phi,\Psi}$ such that
 $\phi_i=\psi_i$ for $i<k_{\Phi,\Psi}$ and
 $w(\phi_i)\neq  w(\psi_i)$ for $i>k_{\Phi,\Psi}$.
 \end{enumerate}
 \end{proposition}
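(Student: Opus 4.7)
The plan is to exploit two structural features built into Definition \ref{Ndef5}: the injectivity of the coding function $\sigma : \Q_s \to \{2j : j \in \Omega_2\}$, and the disjointness $\Omega_1 \cap \Omega_2 = \emptyset$. Because the weight of the leading term $f_1$ of any $n_{2j-1}$ special sequence is drawn from $\{m_{2k} : k \in \Omega_1\}$ while every subsequent weight $w(f_i) = m_{\sigma(f_1,\ldots,f_{i-1})}$ is drawn from $\{m_{2k} : k \in \Omega_2\}$, the first position is automatically segregated from the rest, and among the later positions each weight records, via $\sigma$, the entire prefix that preceded it.

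For part (i), fix $1 \leq i < l \leq n_{2j-1}$. If $i = 1$, then $w(\phi_1) = m_{2k_\Phi}$ for some $k_\Phi \in \Omega_1$, whereas $l \geq 2$ gives $w(\psi_l) = m_{\sigma(\psi_1,\ldots,\psi_{l-1})}$ with $\sigma(\psi_1,\ldots,\psi_{l-1}) \in \{2k : k \in \Omega_2\}$; the disjointness $\Omega_1 \cap \Omega_2 = \emptyset$ then forces $w(\phi_1) \neq w(\psi_l)$. If $i \geq 2$, both weights are of the form $m_{\sigma(\cdot)}$, but the input tuples have lengths $i - 1$ and $l - 1$, which are distinct; injectivity of $\sigma$ therefore yields $w(\phi_i) \neq w(\psi_l)$. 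For part (ii), let $k_{\Phi,\Psi}$ be the smallest index at which $\phi_{k_{\Phi,\Psi}} \neq \psi_{k_{\Phi,\Psi}}$; this exists since $\Phi \neq \Psi$, and by minimality $\phi_i = \psi_i$ for every $i < k_{\Phi,\Psi}$. Suppose, aiming at a contradiction, that $w(\phi_i) = w(\psi_i)$ for some $i > k_{\Phi,\Psi}$. Since $i \geq 2$, this reads $\sigma(\phi_1,\ldots,\phi_{i-1}) = \sigma(\psi_1,\ldots,\psi_{i-1})$, and injectivity of $\sigma$ forces the equality of the tuples $(\phi_1,\ldots,\phi_{i-1}) = (\psi_1,\ldots,\psi_{i-1})$; as $k_{\Phi,\Psi} \leq i - 1$ this entails $\phi_{k_{\Phi,\Psi}} = \psi_{k_{\Phi,\Psi}}$, which contradicts the choice of $k_{\Phi,\Psi}$.

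No genuine obstacle is expected: the statement is essentially bookkeeping about where the indices of the weights live, and the substantive work was front-loaded into the construction of $\sigma$ as an injection whose range is contained in $\{2k : k \in \Omega_2\}$, disjoint from the index set $\Omega_1$ that provides $w(f_1)$. The only point requiring a little care is to appeal to the convention, stated immediately after Definition \ref{Ndef5}, that within a special sequence the weight $w(f_i)$ ($i \geq 2$) is by definition $m_{\sigma(f_1,\ldots,f_{i-1})}$, so that the argument does not run afoul of the non-uniqueness of weights for arbitrary elements of $K_d$.
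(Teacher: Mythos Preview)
Your proof is correct and is precisely the standard argument the paper has in mind; the paper itself omits the proof entirely, stating only ``We leave the easy proof to the reader.'' Your careful handling of the $i=1$ case via $\Omega_1\cap\Omega_2=\emptyset$ and of the remaining cases via the injectivity of $\sigma$ (together with the convention on $w(f_i)$ noted after Definition~\ref{Ndef5}) is exactly what is intended.
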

We leave the easy proof to the reader.

 \begin{remark}\label{Nrem3}
  We mention that, since $\sqrt{m_{2j-1}}=m_{2j-2}$ for each $j$,
  (see Definition \ref{Ndef4}) condition (v) in Definition \ref{Ndef6} is equivalent saying
  that   $K_d$ is closed under the
  $(\mc{A}_{n_{2j+1}},\frac{1}{m_{2j}})$ operation on
 $n_{2j+1}$ special sequences for each $j$.

 We call $2j+1$ special functional, every functional of the form
 $Eh$ with $E$ an interval and $h$ the result of a
 $(\mc{A}_{n_{2j+1}},\frac{1}{m_{2j}})$ operation on
 an $n_{2j+1}$ special sequence.

  Let's observe that each $f\in K_d$ is either of the form $f=\pm e_k^*$ or
 there exists $j\in\N$  such that $f$
   takes  the form
  $f=\frac{1}{w(f)}\sum\limits_{i=1}^df_i$ with $d\le n_{2j+1}$
  and $w(f)=m_{2j+1}$ or $w(f)=m_{2j}$.
 \end{remark}

 \begin{remark}
 The trees of functionals $f\in K_d$ and the order $o(f)$ of such  functionals are
 defined in a similar manner as in Definition \ref{Ndef3} and Definition \ref{Ndef26}.
 \end{remark}

 The rest of the present section is devoted to the proof of the HI
 property of the space $\eqs_{d}$.
 We need to introduce the auxiliary spaces $T'$, $T'_{j_0}$.

 \begin{definition}\label{Ndef7}[The auxiliary spaces $T'$, $T'_{j_0}$.]
 Let $T'$ be  mixed Tsirelson space
 \[T'=T[(\mc{A}_{4n_{i}},\frac{1}{m_{i}})_{i\in\N},\;
 (\mc{A}_{4n_{2j+1}},\frac{1}{m_{2j}})_{j\in\N}]\] and we denote by
 $W'$ the standard  norming set corresponding to  this
 space. This means that $W'$
 is the minimal subset of $c_{00}(\N)$ containing $\{\pm
 e_k^*:\;k\in\N\}$ and being closed in the
 $(\mc{A}_{4n_{i}},\frac{1}{m_{i}})_{i\in\N}$
 and in the
 $(\mc{A}_{4n_{2j+1}},\frac{1}{m_{2j}})_{j\in\N}$
 operations.

 We also consider, for each $j_0\in\N$, the auxiliary space
 \[T'_{j_0}=T[(\mc{A}_{4n_{i}},\frac{1}{m_{i}})_{i\in\N},\;
 (\mc{A}_{4n_{2j+1}},\frac{1}{m_{2j}})_{j\neq j_0}]\]
 and we denote by $W'_{j_0}$ its standard norming set.
 \end{definition}

 \begin{lemma} \label{Nlem5}
 Let $j\in \N$ and $f\in W'$. We have that
  \begin{equation*}\label{eq13}
 |f(\frac{1}{n_{2j}}\sum\limits_{k=1}^{n_{2j}}e_k)| \le
 \begin{cases}
 \frac{2}{m_{i}\cdot m_{2j}},\quad &\text{ if }w(f)=m_{i},\;i<2j\\
 \frac{1}{m_{i}},\quad &\text{ if } w(f)= m_{i},\;i\ge 2j
 \end{cases}
 \end{equation*}
 \end{lemma}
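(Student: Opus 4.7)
My plan is to prove both estimates simultaneously by induction on the order $o(f)$ of a tree analysis of $f\in W'$. Writing $x=\frac{1}{n_{2j}}\sum_{k=1}^{n_{2j}} e_k$ for brevity, I first dispose of the case $w(f)=m_i$ with $i\ge 2j$: if $f=\frac{1}{m_i}\sum_l f_l$ with successive $f_l$, the disjointness of supports together with $|f_l(e_k)|\le 1$ gives $|f(e_k)|\le \frac{1}{m_i}$ pointwise, whence $|f(x)|\le \frac{1}{m_i}$ directly.

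The substance of the argument is the case $i<2j$. I write $f=\frac{1}{m_i}\sum_{l=1}^d f_l$ and observe that the branching is controlled: $d\le 4n_{2j-1}$ (since both the regular operation of weight $m_i$ and the mixed operation of weight $m_{2j'}$ that can produce a functional of weight $m_i$ have size at most $4n_{2j-1}$ when $i<2j$). I then partition the children into three classes: (a) basis functionals $\pm e^*_{k_l}$; (b) weighted $f_l$ with $w(f_l)=m_{i_l}$ and $i_l\ge 2j$; (c) weighted $f_l$ with $w(f_l)=m_{i_l}$ and $i_l<2j$. The type (a) children contribute at most $\frac{4n_{2j-1}}{m_i\, n_{2j}}$, which is comfortably absorbed into $\frac{1}{m_i m_{2j}}$ by the crucial growth condition $n_{2j}\ge (4n_{2j-1})^5 m_{2j}$. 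For type (b), the pointwise estimate $|f_l(e_k)|\le \frac{1}{m_{i_l}}\le \frac{1}{m_{2j}}$ combined with a support-counting argument yields $|f_l(x)|\le \frac{|\supp f_l\cap [1,n_{2j}]|}{n_{2j}\, m_{2j}}$, so the disjointness of the supports gives $\sum_{l\in(b)}|f_l(x)|\le \frac{1}{m_{2j}}$, contributing at most $\frac{1}{m_i m_{2j}}$ to $|f(x)|$.

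The hard part, as I expect, is the type (c) estimate. The naive inductive bound $|f_l(x)|\le \frac{2}{m_{i_l} m_{2j}}$ summed over potentially many children is badly lossy. My plan is to combine this inductive bound with the support-based bound $|f_l(x)|\le \frac{|\supp f_l\cap [1,n_{2j}]|}{n_{2j}\, m_{i_l}}$, splitting the type-(c) children according to whether $s_l=|\supp f_l\cap [1,n_{2j}]|$ exceeds the threshold $2n_{2j}/m_{2j}$: at most $m_{2j}/2$ children can lie in the large-support class by disjointness, which keeps the inductive contribution in check there, while the small-support children are handled by the refined pointwise bound. Making this partition close cleanly is the delicate step; if it does not, the most robust alternative is a global decomposition of the tree in the spirit of the proof of Lemma \ref{Nlem2}, classifying $\supp f$ by the sequence of weights along each root-to-leaf path in a fixed tree analysis and summing geometrically, using the fast growth of $(n_j)$ against the moderate growth of $(m_j)$ to absorb the error terms. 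In either route the growth condition $n_{2j}\ge (4n_{2j-1})^5 m_{2j}$ is what ultimately makes the estimate close.
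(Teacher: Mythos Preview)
Your approach diverges from the paper's and, as you yourself suspect, the type-(c) step does not close. Concretely: with the threshold $s_l>2n_{2j}/m_{2j}$, at most $m_{2j}/2$ children can have large support, but the inductive bound on each of them is only $|f_l(x)|\le \frac{2}{m_{i_l}m_{2j}}\le \frac{1}{m_{2j}}$ (since $m_{i_l}\ge 2$), so their total contribution to $|f(x)|$ is at most $\frac{1}{m_i}\cdot\frac{m_{2j}}{2}\cdot\frac{1}{m_{2j}}=\frac{1}{2m_i}$, which is far from $\frac{2}{m_i m_{2j}}$. For the small-support children the refined pointwise bound $|f_l(x)|\le \frac{s_l}{n_{2j}m_{i_l}}$ does no better than $\frac{s_l}{2n_{2j}}$, and summing the $s_l$ again leaves you with a contribution of order $\frac{1}{2m_i}$. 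No choice of threshold repairs this: the root of the problem is that small weights $m_{i_l}\in\{2,4,\dots\}$ can occur at arbitrarily many children, and the one-level induction retains no control over how many of them there are. Your fallback ``in the spirit of Lemma~\ref{Nlem2}'' is not spelled out, and Lemma~\ref{Nlem2} is a substantially different and more involved argument tailored to the $p_j$-averages; adapting it here would be considerable work and is not what the paper does.

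The paper's proof bypasses the induction entirely. The key device is a cardinality claim proved via the H\"older/$\ell_q$ inequality from \cite{BD}: for any $g\in W'$, the set $\{k:\,|g(e_k)|>1/m_{2j}\}$ has at most $(4n_{2j-1})^4$ elements, because on this set $g$ admits a tree using only operations of weight at most $m_{2j-1}$, hence $\|g\|_q\le 1$ with $q=\log_{m_{2j-2}}(4n_{2j-1})$. Applying this claim to each of the $d\le 4n_{2j-1}$ children $f_r$ of $f$ gives $\#D\le (4n_{2j-1})^5$ for the set $D$ of ``large'' coordinates of all the $f_r$ combined. Then $|f|_D(x)|\le \frac{(4n_{2j-1})^5}{m_i\,n_{2j}}\le \frac{1}{m_i m_{2j}}$ by the growth condition, while on $\N\setminus D$ one has $|f_r(e_k)|\le 1/m_{2j}$ uniformly, giving $|f|_{\N\setminus D}(x)|\le \frac{1}{m_i m_{2j}}$. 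This is the missing idea: rather than inducting on the tree, control the \emph{number of coordinates} where any child can be large, uniformly over all tree analyses, via the $\ell_q$ bound.
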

  \begin{proof}[\bf Proof.] The case $i\ge 2j$ is obvious. For the case $i<2j$
  we need the following    claim.
  (We shall also use the next claim later in the proofs of Proposition \ref{Nprop4}
  and Lemma \ref{Nlem9}.)
  \begin{claim}
  If $g\in W'$ and $j\in\N$ then
  \[  \#\{k\in\N:\;|g(e_k)|>\frac{1}{m_{2j}}\}\le (4n_{2j-1})^4.  \]
  \end{claim}
  \begin{proof}[\bf Proof.]
  Without loss of generality we may assume that $g(e_k)>\frac{1}{m_{2j}}$ for every
  $k\in\supp g$. Then the functional $g$ has a tree in which appear only the operations
  $(\mc{A}_{4n_i},\frac{1}{m_i})_{1\le i\le 2j-1}$ and
  $(\mc{A}_{4n_{2i+1}},\frac{1}{m_{2i}})_{i<j}$.
   Then (see the proof of Lemma \ref{Nlem1} and  the comments before
   its statement) $\|g\|_q\le 1$ where
   \begin{eqnarray*}
   q&= &\max\Big(\{\log_{m_i}(4n_i):\;1\le i\le 2j-1\}\cup\{\log_{m_{2i}}(4n_{2i+1}):\; i<j\}\Big)\\
   &= &    \log_{m_{2j-2}}(4n_{2j-1}).
   \end{eqnarray*}
   Hence $1\ge \|g\|_q\ge\frac{1}{m_{2j}} \cdot \big(\#(\supp g)\big)^{\frac{1}{q}}$.
   Therefore
   \[ \#(\supp g)\le m_{2j}^q=m_{2j-2}^{4q}=(4n_{2j-1})^4.\]
  \end{proof}

  Let now $f\in W'$ with $w(f)=m_i$, $i<2j$. Then the functional $f$ takes the form
  $f=\frac{1}{m_i}\sum\limits_{r=1}^df_r$ with $f_1<f_2<\cdots<f_d$ in $W'$ and $d\le 4n_{2j-1}$.

  We set $D_r=\{l:\; |f_r(e_l)|>\frac{1}{m_{2j}}\}$ for $r=1,2,\ldots,d$
  and $D=\bigcup\limits_{r=1}^dD_r$. From the claim above we get that
  $\# (D_r)\le (4n_{2j-1})^4$
   for each $r$, thus $\#(D)\le (4n_{2j-1})^5$. Therefore
   \begin{eqnarray*}
   |f(\frac{1}{n_{2j}}\sum\limits_{k=1}^{n_{2j}}e_k)|&  \le &
         |f_{|D}(\frac{1}{n_{2j}}\sum\limits_{k=1}^{n_{2j}}e_k)|
         +|f_{|(\N\setminus D)}(\frac{1}{n_{2j}}\sum\limits_{k=1}^{n_{2j}}e_k)|\\
         & \le & \frac{1}{m_i}\cdot\frac{1}{n_{2j}}\cdot\#(D)+\frac{1}{m_i}\cdot\frac{1}{m_{2j}}\\
         & \le & \frac{1}{m_i} \big( \frac{(4n_{2j-1})^5}{n_{2j}}+\frac{1}{m_{2j}}\big)\\
         & \le & \frac{1}{m_i}\big( \frac{1}{m_{2j}}+\frac{1}{m_{2j}}\big)=\frac{2}{m_i\cdot m_{2j}}.
   \end{eqnarray*}
   \end{proof}

 \begin{lemma}\label{Nlem0002} Let $f\in W_{j_0}'$. Then
      \[|f(\frac{1}{n_{2j_0+1}}\sum\limits_{k=1}^{n_{2j_0+1}}e_k)|\le
      \begin{cases}
  \frac{2}{m_{2j_0+1}m_{i}}\,,\quad &\text{ if }w(f)=m_{i},\; i\le 2j_0\\
  \frac{1}{m_i}\quad ,&\text{ if }w(f)=m_i,\;  i\ge 2j_0+1\\
  \end{cases}  \]
  and therefore $|f(\frac{1}{n_{2j_0+1}}\sum\limits_{k=1}^{n_{2j_0+1}}e_k)|\le \frac{1}{m_{2j_0+1}}$.
   \end{lemma}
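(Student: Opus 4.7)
The plan is to mirror the proof of Lemma \ref{Nlem5}, replacing the pair $(2j-1, 2j)$ by $(2j_0, 2j_0+1)$, while carefully exploiting the fact that the auxiliary space $W'_{j_0}$ excludes exactly the operation $(\mc{A}_{4n_{2j_0+1}}, \frac{1}{m_{2j_0}})$ from the second family. This exclusion is the whole point: in $W'_{j_0}$, any functional of weight $m_{2j_0}$ must come from the first family, hence can have at most $4n_{2j_0}$ successors, rather than $4n_{2j_0+1}$.

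First I would reduce to the case $w(f)=m_i$ with $i\le 2j_0$, the case $i\ge 2j_0+1$ being immediate from $|f(e_l)|\le 1/m_i$. Writing $f=\frac{1}{m_i}(f_1+\cdots+f_d)$ as a $W'_{j_0}$-decomposition, I would observe that in all three subcases ($i$ odd with $i\le 2j_0-1$; $i=2j$ with $j<j_0$; $i=2j_0$) one has $d\le 4n_{2j_0}$, the new case being $i=2j_0$ where only the first-family operation is available.

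The main step is the analogue of the Claim inside the proof of Lemma \ref{Nlem5}: for every $g\in W'_{j_0}$ such that $g(e_l)>\frac{1}{m_{2j_0+1}}$ for every $l\in\supp g$, one has $\#\supp g\le (4n_{2j_0})^2$. The weights appearing in a tree of such a $g$ must be strictly less than $m_{2j_0+1}$, hence bounded by $m_{2j_0}$. In $W'_{j_0}$ these correspond to the operations $(\mc{A}_{4n_i},\frac{1}{m_i})$ for $i\le 2j_0$ and $(\mc{A}_{4n_{2j+1}},\frac{1}{m_{2j}})$ for $j<j_0$. Using the Bellenot--Deliyanni argument for the modified version of this space, one has $\|g\|_q\le 1$ with
\[q=\max\bigl(\{\log_{m_i}(4n_i):i\le 2j_0\}\cup\{\log_{m_{2j}}(4n_{2j+1}):j<j_0\}\bigr);\]
using $m_{2j_0}=m_{2j_0-2}^4=m_{2j_0-1}^2$ and the growth $n_{2j_0}\ge(4n_{2j_0-1})^5 m_{2j_0}$, an elementary comparison shows this maximum is attained at $q=\log_{m_{2j_0}}(4n_{2j_0})$. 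Since $m_{2j_0+1}=m_{2j_0}^2$, this yields $\#\supp g\le m_{2j_0+1}^q=(4n_{2j_0})^2$, as required.

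With the claim in hand, the conclusion is immediate. Setting $D_r=\{l:|f_r(e_l)|>1/m_{2j_0+1}\}$ and $D=\bigcup_{r\le d} D_r$, one has $\#D\le d\cdot(4n_{2j_0})^2\le (4n_{2j_0})^3$, and the growth condition $n_{2j_0+1}\ge(4n_{2j_0})^5 m_{2j_0+1}$ gives $\#D/n_{2j_0+1}\le 1/m_{2j_0+1}$. Splitting as in Lemma \ref{Nlem5},
\[\Bigl|f\bigl(\tfrac{1}{n_{2j_0+1}}\sum_{k=1}^{n_{2j_0+1}}e_k\bigr)\Bigr|\le \tfrac{1}{m_i}\Bigl(\tfrac{\#D}{n_{2j_0+1}}+\tfrac{1}{m_{2j_0+1}}\Bigr)\le\tfrac{2}{m_i m_{2j_0+1}}.\]
The final bound $1/m_{2j_0+1}$ follows because $m_1=2$ (handling $i\le 2j_0$), and directly from the first branch when $i\ge 2j_0+1$; the trivial case $f=\pm e_k^*$ is handled using $n_{2j_0+1}\ge m_{2j_0+1}$. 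The most delicate point is the identification of the correct exponent $q$ in the modified space $W'_{j_0}$ and the verification that the exclusion of the operation indexed by $j_0$ gives exactly the tighter cardinality bound $d\le 4n_{2j_0}$ needed to absorb a factor into the growth condition on $n_{2j_0+1}$.
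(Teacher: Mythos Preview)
Your proposal is correct and follows essentially the same approach as the paper's own proof: both establish the same Claim that $\#\{k:|g(e_k)|>1/m_{2j_0+1}\}\le (4n_{2j_0})^2$ via the $\ell_q$ bound with $q=\log_{m_{2j_0}}(4n_{2j_0})$, then split $f$ over $D$ and its complement to obtain $\frac{2}{m_i m_{2j_0+1}}$. Your explicit identification of why the exclusion of the $j=j_0$ operation forces $d\le 4n_{2j_0}$ in the critical case $i=2j_0$ is a welcome clarification that the paper leaves implicit.
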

   \begin{proof}[\bf Proof.]
   The estimate for $i\ge 2j_0+1$ is obvious.
   For the case $i\le 2j_0$ we shall use the following claim.
   \begin{claim}
    For every $g\in W_{j_0}'$, we have that
  $\#\{k:\;|g(e_k)|>\frac{1}{m_{2j_0+1}}\}\le (4n_{2j_0})^2$.
  \begin{proof}[\bf Proof.]
  Let $g\in W_{j_0}'$. Without loss of generality, we may assume that $g(e_k)>\frac{1}{m_{2j_0+1}}$
  for every $k\in\supp g$. The functional $g$ then, has a tree in which appear  only
  the operations $(\mc{A}_{4n_i},\frac{1}{m_i})_{i\le 2j_0}$ and
  $(\mc{A}_{4n_{2i+1}},\frac{1}{m_{2i}})_{i<j_0}$. Then $\|g\|_q\le 1$, where
  \[q=\max\big(\{ \log_{m_i}(4n_i):\;i\le 2j_0\}\cup \{\log_{m_{2i}}(4n_{2i+1}):\;i<j_0\}\big)
       =\log_{m_{2j_0}}(4n_{2j_0}).\]
  It follows that $1\ge \|g\|_q\ge \frac{1}{m_{2j_0+1}} \#(\supp g)^{1/q}$ therefore
   \[\#(\supp(g)) \le m_{2j_0+1}^q=m_{2j_0+1}^{\log_{m_{2j_0}}(4n_{2j_0})}
     =m_{2j_0}^{2\log_{m_{2j_0}}(4n_{2j_0})}=(4n_{2j_0})^2.\]
    \end{proof}
    \end{claim}

    Let $f\in W_{j_0}'$ with $w(f)=m_i$, $i\le m_{2j_0}$.
   Then the functional $f$ takes the form $f=\frac{1}{m_i}\sum\limits_{r=1}^df_r$
   with $d\le 4n_{2j_0}$. For $r=1,\ldots,d$ we set
   $D_r=\{k:\;|f_r(e_k)|>\frac{1}{m_{2j_0+1}}\}.$
   We also set $D=\bigcup\limits_{r=1}^dD_r$. Then, using the claim,
   we get that $\#(D)\le \sum\limits_{r=1}^d\#(D_r)\le d\cdot (4n_{2j_0})^2\le (4n_{2j_0})^3$.
   Therefore
   \begin{eqnarray*}
   |f(\frac{1}{n_{2j_0+1}}\sum\limits_{k=1}^{n_{2j_0+1}}e_k)| & \le  &
   |f_{|D}(\frac{1}{n_{2j_0+1}}\sum\limits_{k=1}^{n_{2j_0+1}}e_k)|+
   |f_{|(\N\setminus D)|}(\frac{1}{n_{2j_0+1}}\sum\limits_{k=1}^{n_{2j_0+1}}e_k)|
   \\ &   \le &
   \frac{1}{m_i}\cdot \frac{1}{n_{2j_0+1}}\cdot\#(D)+\frac{1}{m_i}\cdot \frac{1}{m_{2j_0+1}}\\
   & \le & \frac{2}{m_i\cdot m_{2j_0+1}}.
   \end{eqnarray*}
   The proof of the lemma is complete.
   \end{proof}

\begin{definition}\label{Ndef8}[R.I.S.]
 A block sequence $(x_{k})_k$ in $\eqs_{d}$ is said to be a
 $(C,\varepsilon)$ rapidly increasing sequence (R.I.S.), if
 $\|x_k\|\le C$ for all $k$, and
  there exists a strictly
 increasing sequence $(j_{k})_k$  of positive integers such that
 \begin{enumerate}
 \item[(a)] $\frac{1}{m_{j_1}}\le \e$ and
 $\frac{1}{m_{j_{k+1}}}\cdot \#\supp x_{k}\le\varepsilon$ for each $k$.
 \item[(b)] For every $k=1,2,\ldots$ and every $f\in K_d$ with
 $w(f)=m_i$, $i<j_{k}$  we have that $|f(x_{k})| \le
 \frac{C}{m_i}$.
 \end{enumerate}
 The sequence $(j_k)_k$ is called the associated sequence of the R.I.S. $(x_k)_k$.
 \end{definition}
 The next proposition is the fundamental tool for
 providing upper bounds
  of the norm for certain vectors in $\eqs_{d}$.

  \begin{proposition}\label{Nprop5}[The basic inequality]
 Let $(x_k)_{k}$ be a $(C,\e)$ R.I.S. in $\eqs_{d}$ with associated sequence $(j_k)_k$, and let
 $(\lambda_k)_k$ be a sequence of scalars. Then for every $f\in K_d$ and every interval $I$ there exists
 a functional \[g\in W'=W[(\mc{A}_{4n_{j}},\frac{1}{m_j})_{j\in\N},(\mc{A}_{4n_{2j+1}},\frac{1}{m_{2j}})_{j\in\N}]\]
  with either $w(g)=w(f)$ of $g=e_r^*$
  such that
  \[
   |f(\sum\limits_{k\in I}\lambda_kx_k)|
  \le C \Big(g(\sum\limits_{k\in I}|\lambda_k|e_k)+\e\sum\limits_{k\in I}|\lambda_k|\Big).
  \]
 Moreover if $f$ is the result of an $(\mc{A}_{n_i},\frac{1}{m_i})$
 operation then either $g=e_r^*$ or $g$ is the result of  an
 $(\mc{A}_{4n_i},\frac{1}{m_i})$operation.

    If we additionally assume that for some $2j_0+1<j_1$ we have that
  for every subinterval $J$ of $I$ and every $2j_0+1$ special functional $f$ it holds that
  \begin{equation}\label{n74}
   |f(\sum\limits_{k\in J}\lambda_kx_k)| \le
  C \Big(\max\limits_{k\in J}|\lambda_k|+\e\sum\limits_{k\in J}|\lambda_k|\Big).
  \end{equation}
 then we may select the functional $g$ to be  in
 \[W'_{j_0}=W[(\mc{A}_{4n_{j}},\frac{1}{m_j})_{j\in\N},
        (\mc{A}_{4n_{2j+1}},\frac{1}{m_{2j}})_{j\neq j_0}].\]
   \end{proposition}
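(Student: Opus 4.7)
My plan is to prove the inequality by induction on a fixed tree analysis $T_f=(f_a)_{a\in\mathcal{A}}$ of $f$. At each node $a$ I will associate an auxiliary functional $g_a\in W'$ (respectively $W_{j_0}'$ under the stronger hypothesis) such that, setting $I_a=\{k\in I:\supp(f_a)\cap\ran(x_k)\neq\emptyset\}$,
\[
\Bigl|f_a\Bigl(\sum_{k\in I_a}\lambda_k x_k\Bigr)\Bigr|\le
C\Bigl(g_a\Bigl(\sum_{k\in I}|\lambda_k|e_k\Bigr)+\e\sum_{k\in I_a}|\lambda_k|\Bigr),
\]
and such that either $g_a=\pm e_r^*$ or $g_a$ arises from the same operation type as $f_a$, but with admissibility cap $4n_i$ in place of $n_i$. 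Evaluating at the root $a=0$ produces the conclusion together with both assertions relating $w(g)$ and $w(f)$.

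The inductive step at a non-leaf node $a$ with $f_a=\frac{1}{m_i}\sum_{b\in S_a}f_b$ rests on the following dichotomy. For each $k\in I_a$, either (i) there is a unique $b\in S_a$ with $\ran(x_k)\subset\supp(f_b)$ (``interior''), or (ii) $\ran(x_k)$ meets the supports of at least two $b\in S_a$ (``boundary''). Boundary indices occur for at most two values of $k$ per node, so by Definition \ref{Ndef8}(a) their total contribution is bounded by $C\e\sum|\lambda_k|$ and is absorbed into the error using added indicators $\pm e_k^*$. Interior indices are delegated to the inductive hypothesis at the corresponding child. At the minimal ancestor of each $x_k$ — i.e.\ the smallest node $a$ with $\ran(x_k)\subset\supp(f_a)$, necessarily with some weight $w(f_a)=m_i$ — Definition \ref{Ndef8}(b) gives $|f_a(x_k)|\le C/m_i$ whenever $i<j_k$, which we represent in $W'$ by a single coordinate $e_k^*$ absorbed at weight $m_i$; in the opposite regime $i\ge j_k$ we use $|f_a(x_k)|\le C$ and encode $x_k$ by a bare $\pm e_k^*$. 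Consequently $g_a$ takes the form $\frac{1}{m_i}\sum_{b\in S_a}g_b$ augmented by at most a bounded number of leaf indicator terms created at node $a$.

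The refinement producing $g\in W_{j_0}'$ proceeds identically, except at any node $f_a$ that happens to be a $2j_0+1$ special functional: hypothesis \eqref{n74} applied to $J=I\cap I_a$ yields
\[
\Bigl|f_a\Bigl(\sum_{k\in J}\lambda_k x_k\Bigr)\Bigr|\le C\Bigl(\max_{k\in J}|\lambda_k|+\e\sum_{k\in J}|\lambda_k|\Bigr),
\]
which is exactly $C\bigl(e_{k_\ast}^*(\sum|\lambda_k|e_k)+\e\sum|\lambda_k|\bigr)$ for a maximizing $k_\ast$. Thus the entire subtree below $a$ collapses to a single coordinate functional, and the $(\mathcal{A}_{4n_{2j_0+1}},\tfrac{1}{m_{2j_0}})$ operation is never invoked. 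The principal technical obstacle is the admissibility bookkeeping: verifying that across the induction the auxiliary $g_a$ has at each node at most $4n_i$ \emph{successive} children. This requires charging every $x_k$ to a single ancestor — namely its minimal covering node — so that no $x_k$ is counted twice, and then tallying at each level the children of $f_a$ together with the at-most-two boundary indicators and the fresh leaf indicators. The factor $4$ in $4n_i$ is precisely what these bounded corrections consume.
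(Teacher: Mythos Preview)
Your overall inductive framework is close to the paper's, but two concrete errors break the argument.

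First, the claim that boundary indices occur for at most two values of $k$ per node is false. If $f_a=\frac{1}{m_i}\sum_{b=1}^d f_b$ and the $x_k$ are arranged so that each $\ran(x_k)$ straddles the gap between $\supp(f_k)$ and $\supp(f_{k+1})$, then all $d-1$ indices are boundary. The correct bound is $\#I_0\le d$, and this is exactly what forces the admissibility cap up to $2d\le 4n_i$: the factor $4$ accommodates $d$ inductive children plus up to $d$ boundary indicators $e_k^*$, not two. (Your dichotomy is also incomplete: a $k$ for which $\ran(x_k)$ meets exactly one $\supp(f_b)$ without being contained in it is neither interior nor boundary under your definitions; the paper defines the sets $I_i$ by intersection, not containment.)

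Second, and more structurally, boundary terms cannot be absorbed into the $\e$-error via condition (a) of Definition~\ref{Ndef8}. That condition reads $\frac{1}{m_{j_{k+1}}}\#\supp x_k\le\e$ and is useful only when $w(f)\ge m_{j_{k+1}}$. The paper's organizing move is a case split, at each inductive step, on how $w(f)$ sits relative to the sequence $(m_{j_k})_{k\in I}$. When $w(f)<m_{j_k}$ for every $k\in I$ (the only case in which one recurses on the children), condition (b) applies to every boundary $k$, giving $|f(x_k)|\le C/w(f)$; these contribute $e_k^*$ terms to $g$ inside the $\frac{1}{w(f)}(\cdots)$ bracket and do not go into the $\e$-error. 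When instead $m_{j_{k_0}}\le w(f)<m_{j_{k_0+1}}$ for some $k_0\in I$, the recursion terminates: condition (a) handles all $k<k_0$ and condition (b) handles all $k>k_0$, each yielding $|f(x_k)|\le C\e$, and one simply sets $g=e_{k_0}^*$. Your sketch collapses these regimes; in particular, placing a ``bare $\pm e_k^*$'' at an internal node whenever $i\ge j_k$ would produce a $g_a$ that is not of the form $\frac{1}{m_i}(\cdots)$ and hence not in $W'$ with the asserted weight. The missing case split is precisely what makes the weight bookkeeping and the ``$g=e_r^*$'' alternative in the statement come out cleanly.
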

  \begin{proof}[\bf Proof.]
  We first treat the case that for some $j_0$, the additional assumption \eqref{n74}
   in the statement of the proposition
  is satisfied. We proceed by induction on the order $o(f)$ of the functional $f$.

  If $o(f)=1$, i.e. if  $f=\pm e_r^*$, then we set $g=e_k^*$ for the unique $k\in I$ for
  which $r\in\ran(x_k)$ if such a $k$ exists, otherwise we set $g=0$.

  Suppose now that the result holds for every functional in $K_d$ with order less than $q$ and
  consider $f\in K_d$ with $o(f)=q$. Then
  \[f=\frac{1}{w(f)}(f_1+f_2+\cdots +f_d)  \]
  where $f_1<f_2<\cdots<f_d$ are in $K_d$ with $o(f_i)<q$, and either $w(f)=m_j$ and $d\le n_j$,
  or $f$ is a $2j+1$ special functional (then $w(f)=\sqrt{m_{2j+1}}=m_{2j}$ and $d\le n_{2j+1}$).
    We distinguish four cases.

   {\bf Case 1.} $f$ is a $2j_0+1$ special functional.\\
   We
   choose $k_0\in I$  with $|\lambda_{k_0}|=\max\limits_{k\in I}|\lambda_k|$ and we set
   $g=e_{k_0}^*$. Then from our assumption \eqref{n74} it follows that
   \begin{eqnarray*}
   |f(\sum\limits_{k\in I}\lambda_k x_k)|& \le & C(\max_{k\in I}|\lambda_k |
                                  +\e \sum\limits_{k\in I}|\lambda_k|\Big)\\
                  & \le & C\Big(g(\sum\limits_{k\in I}|\lambda_k |e_k)+
                   \e\sum\limits_{k\in I}|\lambda_k|\Big).
   \end{eqnarray*}

   {\bf Case 2.} $w(f)<m_{j_k}$ for all $k\in I$ and $f$ is not a $2j_0+1$ special functional.\\
   For $i=1,\ldots,d$ we set $E_i=\ran(f_i)$, and
   \[ I_i=\{k\in I:\; \ran(x_k)\cap E_i\neq\emptyset\mbox{ and }\ran(x_k)\cap E_{i'}=\emptyset
   \mbox{ for all } i'\in I\setminus\{i\}\}.\]
   We also set
   \[ I_0=\big\{k\in I:\; \ran(x_k)\cap E_i\neq\emptyset\mbox{ for at least two }i\in \{1,\ldots,\}\big\}\]
   and $I'=I\setminus\bigcup\limits_{i=0}^dI_i$.

   We observe that $|I_0|\le d$. For each $k\in I_0$ assumption (b) in the definition of R.I.S.
   yields that
   \begin{equation}\label{n75}
   |f(x_k)|\le \frac{C}{w(f)}.
   \end{equation}
   Observe also, that for each $i=1,\ldots,d$, $I_i$ is a subinterval of $I$, hence our inductive
   assumption yields that there exists $g\in W_{j_0}'$ with $\supp g_i\subset I_i$ such that
   \begin{equation}\label{n76}
   |f_i(\sum\limits_{k\in I_i}\lambda_kx_k)|
  \le C \Big(g_i(\sum\limits_{k\in I_i}|\lambda_k|e_k)+\e\sum\limits_{k\in I_i}|\lambda_k|\Big).
  \end{equation}
   The family $\{I_1,\ldots,I_d\}\cup\{\{k\}:\;k\in I_0\}$ consists of pairwise disjoint intervals
   and has cardinality less than or equal to $2d$. We set
   \[ g=\frac{1}{w(f)}\Big(\sum\limits_{i=1}^d g_i+\sum\limits_{k\in I_0}e_k^*\Big).\]
   Then $g\in W_{j_0}'$, $\supp g\subset I$, while from \eqref{n75},\eqref{n76} we get that
   \begin{eqnarray*}
   |f(\sum\limits_{k\in I}\lambda_kx_k)| & \le &
   \sum\limits_{k\in I_0}|\lambda_k||f(x_k)|+
   \frac{1}{w(f)}\sum\limits_{i=1}^d|f_i(\sum\limits_{k\in I_i}\lambda_kx_k)|\\
   & \le & \sum\limits_{k\in I_0}|\lambda_k|\frac{C}{w(f)}+\frac{1}{w(f)}
   \sum\limits_{i=1}^dC\Big(g_i(\sum\limits_{k\in I_i}|\lambda_k|e_k)+\e\sum\limits_{k\in I_i}|\lambda_k|\Big)\\
    & \le & C\Big(g(\sum\limits_{k\in I}|\lambda_k|e_k)+\e\sum\limits_{k\in I}|\lambda_k|\Big).
   \end{eqnarray*}

   {\bf Case 3.} $m_{j_{k_0}}\le w(f)<m_{j_{k_0}+1}$ for some $k_0\in I$.\\
   In this case, for $k\in I$ with $k<k_0$ we have that $m_{j_{k+1}}\le m_{j_{k_0}}\le w(f)$, hence,
   using assumption (a) in the definition of R.I.S. it follows that
   \begin{equation}\label{n77}
   |f(x_k)|\le \frac{1}{w(f)}\|x_k\|_{\ell_1}\le \frac{1}{m_{j_{k+1}}}\cdot C\cdot\#\supp(x_k)\le C\e.
   \end{equation}
   For $k\in I$ with $k>k_0$,  from assumptions (a), (b) in the definition of R.I.S. we get that
    \begin{equation}\label{n78}
   |f(x_k)|\le \frac{C}{w(f)}\le \frac{C}{m_{j_1}}\le C\e.
   \end{equation}
   Thus, setting $g=e_{k_0}^*$ and using \eqref{n77}, \eqref{n78} we get that
   \begin{eqnarray*}
    |f(\sum\limits_{k\in I}\lambda_k x_k) | & \le&
      |\lambda_{k_0}||f(x_{k_0})|+\sum\limits_{\genfrac{}{}{0pt}{}{k\in I}{k\neq k_0}}|\lambda_k| |f(x_k)|\\
      & \le&  |\lambda_{k_0}| C+\sum\limits_{\genfrac{}{}{0pt}{}{k\in I}{k\neq k_0}}|\lambda_k|C\e\\
      & \le&  C\Big(g(\sum\limits_{k\in I}|\lambda_k|e_k+\e\sum\limits_{k\in I}|\lambda_k|\Big)
    \end{eqnarray*}

   {\bf Case 4.} $m_{j_{k+1}}\le w(f)$ for all $k\in I$.\\
   In this case, as in Case 3, we get that $|f(x_k)|\le C\e$ for all $k\in I$ so we may set $g=0$.

   This completes the proof in the case we have made the additional assumption about $j_0$.
   When no assumption about $j_0$ is made, the induction is similar to the previous one, with the
   only difference concerning  Case 2, where  we include $f$ which is a $2j_0+1$ special functional
   (thus Case 1 does not appear). In each inductive step the resulting functional $g$ belongs to
   $W'$.
      \end{proof}

 From  Proposition \ref{Nprop5} and  Lemma \ref{Nlem5}
 we conclude the following.
  \begin{proposition}\label{Nprop7}
 Let $(x_k)_{k=1}^{n_{2j}}$ be a $(C,\e)$ R.I.S.  with
 $\e\le\frac{1}{m_{2j}^2}$.
 Let also $f\in K_d$. Then
  \[|f(\frac{1}{n_{2j}}\sum\limits_{k=1}^{n_{2j}}x_{k})| \le
 \begin{cases}
 \frac{3C}{m_{2j}m_{i}}\,,\quad &\text{ if }w(f)=m_{i},\; i<2j\\
 \frac{C}{m_{i}}+C\e\,,&\text{ if }w(f)=m_i,\;  i\ge 2j\\
 \end{cases}\]
 In particular
 $\|\frac{1}{n_{2j}}\sum\limits_{k=1}^{n_{2j}}x_{k}\|
 \le\frac{2C}{m_{2j}}$.
  \end{proposition}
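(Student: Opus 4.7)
The plan is to apply the basic inequality (Proposition \ref{Nprop5}) to the vector $\frac{1}{n_{2j}}\sum_{k=1}^{n_{2j}} x_k$ and then use Lemma \ref{Nlem5} to evaluate the resulting auxiliary functional on the average $\frac{1}{n_{2j}}\sum_{k=1}^{n_{2j}} e_k$ of the first $n_{2j}$ basis vectors of $T'$.

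More precisely, fix $f\in K_d$ and apply Proposition \ref{Nprop5} with $I=\{1,\ldots,n_{2j}\}$ and $\lambda_k = \frac{1}{n_{2j}}$ for every $k\in I$. This yields a functional $g\in W'$ with either $g=\pm e_r^*$ for some $r$, or $w(g)=w(f)$, such that
\[
\Bigl|f\bigl(\tfrac{1}{n_{2j}}\sum_{k=1}^{n_{2j}}x_k\bigr)\Bigr|
\le C\Bigl(g\bigl(\tfrac{1}{n_{2j}}\sum_{k=1}^{n_{2j}} e_k\bigr)+\e\Bigr).
\]
If $g=\pm e_r^*$, then $g(\frac{1}{n_{2j}}\sum e_k)\le \frac{1}{n_{2j}}$, which is negligible compared to the desired bounds. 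Otherwise $w(g)=w(f)=m_i$, and Lemma \ref{Nlem5} applies directly.

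In the case $i<2j$, Lemma \ref{Nlem5} gives $g(\frac{1}{n_{2j}}\sum e_k)\le \frac{2}{m_i m_{2j}}$, so
\[
\Bigl|f\bigl(\tfrac{1}{n_{2j}}\sum x_k\bigr)\Bigr|\le C\Bigl(\tfrac{2}{m_i m_{2j}}+\e\Bigr)\le \tfrac{2C}{m_im_{2j}}+\tfrac{C}{m_{2j}^2}\le \tfrac{3C}{m_i m_{2j}},
\]
where for the last inequality we use that $m_i\le m_{2j}$, so $\frac{1}{m_{2j}^2}\le \frac{1}{m_i m_{2j}}$. In the case $i\ge 2j$, Lemma \ref{Nlem5} yields $g(\frac{1}{n_{2j}}\sum e_k)\le \frac{1}{m_i}$, whence
\[
\Bigl|f\bigl(\tfrac{1}{n_{2j}}\sum x_k\bigr)\Bigr|\le \tfrac{C}{m_i}+C\e,
\]
as required.

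For the ``in particular'' statement, take the supremum over $f\in K_d$. In the case $i<2j$ we have $m_i\ge m_1=2$, so $\frac{3C}{m_i m_{2j}}\le \frac{3C}{2m_{2j}}\le \frac{2C}{m_{2j}}$. In the case $i\ge 2j$ we have $\frac{C}{m_i}+C\e\le \frac{C}{m_{2j}}+\frac{C}{m_{2j}^2}\le \frac{2C}{m_{2j}}$. The remaining case $f=\pm e_r^*$ and the situation $g=\pm e_r^*$ give bounds of order $C/n_{2j}$, which is far smaller. This is essentially a mechanical combination of two results stated above; there is no genuine obstacle, only the routine bookkeeping of ensuring that the $\e\le 1/m_{2j}^2$ hypothesis absorbs the additive $C\e$ error into the main term in each case.
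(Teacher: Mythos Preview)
Your proof is correct and follows exactly the approach the paper indicates: the paper's own proof is the single sentence ``From Proposition~\ref{Nprop5} and Lemma~\ref{Nlem5} we conclude the following,'' and you have carried out precisely that combination, filling in the routine arithmetic to absorb the $C\e$ term (using $\e\le 1/m_{2j}^2$ and $m_i\le m_{2j}$ for $i<2j$) and to deduce the final $\frac{2C}{m_{2j}}$ bound.
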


 \begin{definition}\label{Ndef9} A vector $x\in \eqs_d$ is said to be a $C-\ell^k_1$
 average if $x$ takes the form
 $x=\frac{1}{k}\sum\limits_{i=1}^kx_i$,
 with $\|x_i\|\le C$ for each $i$, $x_1<\cdots<x_k$ and $\|x\|\ge 1$.
 \end{definition}

 \begin{lemma}\label{Nlem6} Let $Y$ be a block subspace of $\eqs_d$ and let $k\in \N$ .
 Then there exists a vector $x\in Y$ which is a $2-\ell_1^k$
 average.
 \end{lemma}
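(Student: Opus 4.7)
The plan is to argue by contradiction, following the classical Schlumprecht--Gowers--Maurey iteration scheme. Assume $Y$ contains no $2$-$\ell_1^k$ average and fix a normalized block basis $(w_n)_{n\in\N}$ of $Y$. A scaling argument shows that the assumption is equivalent to the following: for every nonzero $z\in Y$ and every decomposition $z=z_1+z_2+\cdots+z_k$ as a sum of successive vectors in $\eqs_d$,
\[\|z\|<\tfrac{k}{2}\max_{1\le i\le k}\|z_i\|.\]

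The strategy is to exhibit a specific $z\in Y$ whose norm is forced to be simultaneously very small (by iterating the reformulated hypothesis) and rather large (by a direct computation in the norming set). First I would choose $j$ large enough that $n_j^{\log_k 2}>km_j$; this is possible because in Definition~\ref{Ndef4} the recursion $n_j\ge(4n_{j-1})^5 m_j$ makes $\log_2 n_j$ grow essentially like $5^j$, while $\log_2 m_j$ only grows like $2^{j-2}$. Setting $L=\lceil\log_k n_j\rceil$ and $N=k^L$, I would then consider $z=w_1+w_2+\cdots+w_N\in Y$. Iterating the displayed inequality $L$ times---at each stage partitioning the current block of consecutive $w_n$'s into $k$ equal sub-blocks, passing to the sub-block whose sum has the largest norm, and picking up a factor $k/2$ each time---one arrives at a single $w_{n_0}$ of norm $1$, yielding
\[\|z\|<\bigl(\tfrac{k}{2}\bigr)^{L}\le\tfrac{k}{2}\cdot n_j^{1-\log_k 2}.\]

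For the matching lower bound, for each $1\le n\le n_j$ I would pick $\phi_n\in K_d$ supported in $\ran w_n$ with $\phi_n(w_n)\ge\tfrac12$. Since the $\phi_n$ are successive, Definition~\ref{Ndef6}(iv) gives $\phi:=\tfrac{1}{m_j}(\phi_1+\cdots+\phi_{n_j})\in K_d$. Evaluating $\phi$ on $z$ yields $\|z\|\ge\phi(z)\ge n_j/(2m_j)$. Combining the two bounds produces $n_j^{\log_k 2}<km_j$, contradicting the choice of $j$.

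The main obstacle is arithmetic: one must verify that Definition~\ref{Ndef4} really is restrictive enough to realise $n_j^{\log_k 2}>km_j$ for every $k$. It is also worth noting that the iteration genuinely requires $(w_n)_{n\in\N}$ to be a block basis of $Y$ rather than the basis of $\eqs_d$, so that the intermediate partial sums $\sum_{n\in I}w_n$ remain in $Y$; the reformulated hypothesis (which allows the $z_i$ to lie anywhere in $\eqs_d$) then covers these decompositions as a special case.
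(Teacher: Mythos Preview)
Your argument is correct and is precisely the classical Schlumprecht--Gowers--Maurey iteration; the paper itself does not give a proof but simply refers to \cite{ArTo}, Lemma~II.22, which is exactly this argument. The arithmetic point you flag is genuine but routine: since $\log_2 m_j=2^{j-2}$ while the recursion $n_j\ge(4n_{j-1})^5m_j$ forces $\log_2 n_j$ to grow at least like a constant times $5^{\,j}$, for every fixed $k$ one has $\log_2 n_j>(\log_2 k)^2+(\log_2 k)\log_2 m_j$ for all large $j$, which is the required inequality.
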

 For a proof we refer to \cite{ArTo} Lemma II.22.

 \begin{lemma}\label{Nlem7} If $x$ is a $C-\ell_1^k$ average, $d\le k$ and $E_1<\cdots
 <E_d$ is a sequence of intervals then
 $\sum\limits_{i=1}^d\|E_ix\|\le C(1+\frac{2d}{k})$.
 In particular
 if $x$ is a $C-\ell_{1}^{n_{2j}}$ average then for every
 $f\in K_d$ with $w(f)=m_{i}$, $i<2j$ we have that $|f(x)|\le
 \frac{1}{m_{i}}C(1+\frac{2n_{2j-1}}{n_{2j}})\le
 \frac{3C}{2}\frac{1}{w(f)}$.
 \end{lemma}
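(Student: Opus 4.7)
The plan is to reduce the first inequality to a combinatorial incidence bound, and then derive the ``in particular'' statement as a direct corollary. First I would write $x=\tfrac{1}{k}\sum_{s=1}^k x_s$ with $x_1<x_2<\cdots<x_k$ successive and $\|x_s\|\le C$. Since $K_d$ is closed under restriction to intervals of $\N$, the basis of $\eqs_d$ is bimonotone, so $\|Ey\|\le\|y\|$ for every interval $E$ and every $y\in\eqs_d$; in particular $\|E_i x_s\|\le C$ for all $s,i$. The triangle inequality then yields
\[
\sum_{i=1}^d \|E_i x\| \;\le\; \frac{1}{k}\sum_{s=1}^k \sum_{i=1}^d \|E_i x_s\| \;\le\; \frac{C}{k}\cdot N,
\]
where $N=\#\{(s,i):\ran(x_s)\cap E_i\ne\emptyset\}$. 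The lemma then reduces to the combinatorial estimate $N\le k+2d$.

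To bound $N$, I would classify each incidence pair $(s,i)$ according to the relative position of the intervals $\ran(x_s)$ and $E_i$: (a) $\ran(x_s)\subseteq E_i$, (b) $E_i\subseteq\ran(x_s)$, or (c) proper overlap. Because the $E_i$ are pairwise disjoint, each $s$ contributes to type (a) at most once, giving at most $k$ type-(a) pairs. Because the ranges $\ran(x_s)$ are pairwise disjoint, each $E_i$ is contained in at most one $\ran(x_s)$, so type (b) contributes at most $d$. For type (c), proper overlap forces exactly one of $\min E_i,\max E_i$ to lie in the interior of $\ran(x_s)$, and each of these endpoints belongs to at most one $\ran(x_s)$, so type (c) contributes at most $2d$. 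The refinement to exactly $k+2d$ uses the fact that for each $s$ with $|I_s|\ge 2$ (where $I_s=\{i:\ran(x_s)\cap E_i\ne\emptyset\}$, automatically an interval in $\{1,\ldots,d\}$ because $\ran(x_s)$ is an interval) the count $|I_s|$ equals $2$ plus the number of $E_j$'s strictly interior to $\ran(x_s)$, and these strict interior containments are pairwise disjoint as $s$ varies. This combinatorial sharpening is the only subtle step.

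For the ``in particular'' statement I would apply the first inequality to the canonical decomposition of $f$. Any $f\in K_d$ with $w(f)=m_i$ takes the form $f=\tfrac{1}{m_i}\sum_{r=1}^{d'} f_r$ with $f_1<\cdots<f_{d'}$ in $K_d$ and $d'\le n_i\le n_{2j-1}$; setting $E_r=\ran(f_r)$ produces disjoint successive intervals. Since $|f_r(y)|\le\|y\|$ for every $y\in\eqs_d$,
\[
|f(x)| \;\le\; \frac{1}{m_i}\sum_{r=1}^{d'} |f_r(E_r x)| \;\le\; \frac{1}{m_i}\sum_{r=1}^{d'}\|E_r x\| \;\le\; \frac{C}{m_i}\Bigl(1+\frac{2n_{2j-1}}{n_{2j}}\Bigr).
\]
The growth condition $n_{2j}\ge(4n_{2j-1})^5 m_{2j}$ from Definition \ref{Ndef4} gives $\tfrac{2n_{2j-1}}{n_{2j}}\le\tfrac12$, so the right-hand side is at most $\tfrac{3C}{2m_i}=\tfrac{3C}{2w(f)}$. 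The main obstacle is the combinatorial sharpening for $N$ in the middle paragraph; the remaining steps are formal once bimonotonicity of the basis is in hand.
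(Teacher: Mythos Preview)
The paper does not give its own proof here but defers to Lemma~II.23 of \cite{ArTo}; your argument is exactly the standard one used there. The reduction to counting incidence pairs $N=\#\{(s,i):\ran(x_s)\cap E_i\ne\emptyset\}\le k+2d$ is correct. A slightly cleaner way to finish the count than your $I_s$-sketch is to sum over $i$ instead: each $J_i=\{s:\ran(x_s)\cap E_i\ne\emptyset\}$ is an interval of $\{1,\dots,k\}$, the blocks indexed by $J_i\setminus\{\min J_i,\max J_i\}$ satisfy $\ran(x_s)\subset E_i$ and are therefore disjoint as $i$ varies, giving $\sum_i\max(|J_i|-2,0)\le k$, while trivially $\sum_i\min(|J_i|,2)\le 2d$.

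One small correction in the ``in particular'' paragraph: the claim $d'\le n_i$ is not quite right when $f$ is a special functional. A $2i'+1$ special functional has $w(f)=\sqrt{m_{2i'+1}}=m_{2i'}$ but may have up to $n_{2i'+1}$ pieces, not $n_{2i'}$. This does not affect the conclusion, since in that case $i=2i'$ is even and $i<2j$ forces $i\le 2j-2$, hence $d'\le n_{i+1}\le n_{2j-1}$ (cf.\ Remark~\ref{Nrem3}).
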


  For a proof we refer to \cite{ArTo} Lemma II.23. The next lemma is a direct consequence
 of Lemma \ref{Nlem7}.

 \begin{lemma}\label{Nlem8} Let \seq{x}{k} be a block sequence in
 $\eqs_{d}$
 such that each $x_k$ is a $C-\ell_{1}^{n_{2l_k}}$ average, where
 \seq{l}{k} is a strictly increasing
  sequence of integers, and let $\e>0$.
 Then there exists a subsequence of \seq{x}{k} which is a
 $(\frac{3C}{2},\e)$ R.I.S.
 \end{lemma}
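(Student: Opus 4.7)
The plan is to extract the subsequence by a straightforward diagonal / recursive procedure, using Lemma \ref{Nlem7} to take care of condition (b) of the R.I.S. definition (Definition \ref{Ndef8}), and using the fact that $l_k\to\infty$ together with freedom in choosing the associated sequence $(j_k)$ to take care of condition (a).

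First I would record the trivial fact that if $x$ is any $C$-$\ell_1^n$ average, then $\|x\|\le C$, so the uniform bound $\|x_k\|\le C\le 3C/2$ needed in Definition \ref{Ndef8} is automatic. Next, Lemma \ref{Nlem7} gives that for each $k$ and each $f\in K_d$ with $w(f)=m_i$ and $i<2l_k$ one has $|f(x_k)|\le \tfrac{3C/2}{m_i}$; this is exactly the content of condition (b) in the R.I.S. definition, provided we arrange that the associated index $j_k$ satisfies $j_k<2l_k$ (so that ``$i<j_k$'' forces ``$i<2l_k$'').

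I would then build the subsequence $(x_{k_n})_{n\in\N}$ and an associated strictly increasing sequence $(j_n)_{n\in\N}$ recursively, as follows. Pick $j_1$ large enough that $1/m_{j_1}\le\e$, and then (using $l_k\to\infty$) choose $k_1$ so large that $2l_{k_1}>j_1$. Having chosen $k_1<\cdots<k_n$ and $j_1<\cdots<j_n$ with $j_m<2l_{k_m}$ for each $m\le n$, pick $j_{n+1}>j_n$ large enough that
\[
\frac{1}{m_{j_{n+1}}}\cdot \#\supp(x_{k_n})\le\e,
\]
and then pick $k_{n+1}>k_n$ with $2l_{k_{n+1}}>j_{n+1}$; the latter choice is possible because $(l_k)$ is strictly increasing and hence tends to infinity. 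By construction the sequence $(x_{k_n})_n$ satisfies clause (a) of Definition \ref{Ndef8} with parameter $\e$, and by the observation in the previous paragraph it also satisfies clause (b) with constant $3C/2$, so it is a $(3C/2,\e)$ R.I.S. with associated sequence $(j_n)_n$.

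There is no real obstacle here: the statement is essentially a bookkeeping corollary of Lemma \ref{Nlem7}, whose estimate $|f(x)|\le \tfrac{3C/2}{w(f)}$ already has the correct constant $3C/2$ built in, so the only work is to align the two inequalities $j_n<2l_{k_n}$ and $\tfrac{1}{m_{j_{n+1}}}\#\supp(x_{k_n})\le\e$ simultaneously, which is achieved by first fixing $j_{n+1}$ and then pushing $k_{n+1}$ far enough along the original sequence.
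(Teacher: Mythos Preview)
Your proof is correct and is precisely the argument the paper has in mind: the paper gives no details at all, simply stating that the lemma is a direct consequence of Lemma~\ref{Nlem7}, and your recursive extraction (choose $j_{n+1}$ large to control $\#\supp(x_{k_n})$, then push $k_{n+1}$ far enough so that $2l_{k_{n+1}}>j_{n+1}$ and invoke Lemma~\ref{Nlem7} for condition~(b)) is exactly that direct consequence spelled out.
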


 \begin{proposition}\label{Nprop6}[Existence of R.I.S.]
 For every $\e>0$ and every block subspace $Z$ of $\eqs_{d}$
 there exists a $(3,\e)$ R.I.S. $(x_k)_{k\in\N}$ in $Z$ with $\|x_k\|\ge 1$.
 \end{proposition}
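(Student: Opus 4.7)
The plan is to obtain the R.I.S.\ as essentially a direct combination of Lemmas \ref{Nlem6}, \ref{Nlem7} and \ref{Nlem8}, since each of these has already done the hard technical work (the Bourgain-type extraction of $\ell_1^k$-averages, the $\ell_1$-distribution estimate, and the passage from $\ell_1^{n_{2l_k}}$-averages to a R.I.S., respectively).

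First I would fix a strictly increasing sequence $(l_k)_{k\in\N}$ of positive integers, to be used as the exponents of the $\ell_1^{n_{2l_k}}$-averages. Then I would construct inductively a block sequence $(x_k)_{k\in\N}$ in $Z$ such that each $x_k$ is a $2$-$\ell_1^{n_{2l_k}}$ average in the sense of Definition \ref{Ndef9}. The inductive step is immediate: once $x_1<\cdots<x_{k-1}$ have been chosen, the tail $Z_k=Z\cap\overline{\mathrm{span}}\{e_n:n>\max\supp x_{k-1}\}$ is still an infinite dimensional block subspace of $\eqs_d$, and Lemma \ref{Nlem6} applied to $Z_k$ with the integer $n_{2l_k}$ produces a vector $x_k\in Z_k$ which is a $2$-$\ell_1^{n_{2l_k}}$ average; by Definition \ref{Ndef9} this automatically gives $\|x_k\|\ge 1$ and $x_{k-1}<x_k$.

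Next, given the prescribed $\e>0$, Lemma \ref{Nlem8} applied to the block sequence $(x_k)_k$ with constant $C=2$ yields a subsequence $(x_{k_n})_{n\in\N}$ which is a $(\tfrac{3\cdot 2}{2},\e)=(3,\e)$ R.I.S. Relabeling $x_{k_n}$ as $x_n$, we obtain the required $(3,\e)$ R.I.S.\ in $Z$, each of whose terms still satisfies $\|x_n\|\ge 1$.

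There is essentially no obstacle at this stage: the delicate estimate is hidden in Lemma \ref{Nlem7}, which translates the geometric property of an $\ell_1^{n_{2j}}$ average into the uniform bound $|f(x)|\le\tfrac{3C}{2}\tfrac{1}{w(f)}$ for all $f\in K_d$ with $w(f)<m_{2j}$, and this is exactly condition (b) in Definition \ref{Ndef8}. Condition (a) is then arranged by passing to a sufficiently sparse subsequence, which is what Lemma \ref{Nlem8} formalizes via the rapid increase of the sequence $(l_{k_n})$. Hence the proposition follows without further work.
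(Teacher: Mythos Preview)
Your proposal is correct and follows essentially the same route as the paper, which simply says the result ``follows from Lemma \ref{Nlem6} and Lemma \ref{Nlem8}''. You have just unpacked those two citations: use Lemma \ref{Nlem6} inductively in tail block subspaces of $Z$ to build a block sequence of $2$-$\ell_1^{n_{2l_k}}$ averages (each automatically of norm $\ge 1$ by Definition \ref{Ndef9}), then invoke Lemma \ref{Nlem8} with $C=2$ to pass to a $(\tfrac{3\cdot 2}{2},\e)=(3,\e)$ R.I.S.\ subsequence.
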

 \begin{proof}[\bf Proof.] It follows from Lemma \ref{Nlem6} and Lemma
 \ref{Nlem8}.
 \end{proof}

 \begin{definition}\label{Ndef10}[Exact pairs.]
  A pair $(x,\phi)$ with $x\in \eqs_{d}$
 and $\phi\in K_{d}$ is said to be a $(C,2j)$ exact pair
 (where  $C\ge 1$,  $j\in\N$)
 if the  following conditions are satisfied:
 \begin{enumerate}
 \item[(i)] $1\le \|x\|\le C$, for every
 $\psi\in K_{d}$ with $w(\psi)<m_{2j}$,  we have
 that $|\psi(x)|\le\frac{3C}{w(\psi)}$, while
 for $\psi\in K_{d}$ with $w(\psi)>m_{2j}$,
  $|\psi(x)|\le\frac{C}{m_{2j}^2}$.
 \item[(ii)] $\phi\in K_d$ with $w(\phi)=m_{2j}$.
 \item[(iii)] $\phi(x)=1$ and $\ran x=\ran \phi$.
 \end{enumerate}
 \end{definition}

 \begin{proposition}\label{Nprop8}
 Let  $j\in\N$. Then for every block subspace $Z$ of
 $\eqs_{d}$,  there exists  a $(6,2j)$ exact pair $(x,\phi)$ with $x\in Z$.
 \end{proposition}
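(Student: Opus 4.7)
The proof should follow the standard R.I.S.-based construction of exact pairs in an HI space with a mixed-Tsirelson norming set. I would carry it out in three steps.

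First, apply Proposition \ref{Nprop6} with $\e$ chosen sufficiently small (roughly $\e\le m_{2j}^{-3}$, so that all error terms are negligible relative to the target bounds) to extract from $Z$ a $(3,\e)$ R.I.S.\ $(x_k)_{k=1}^{n_{2j}}$ with $\|x_k\|\ge 1$. Because $\eqs_d$ is normed by $K_d$ and $K_d$ is closed under restriction to intervals (Definition \ref{Ndef6}(iii)), one can choose for each $k$ a functional $f_k\in K_d$ with $\ran(f_k)\subseteq\ran(x_k)$ and $f_k(x_k)\ge 1$. Since the $x_k$ are successive, $(f_k)_{k=1}^{n_{2j}}$ is $\mc{A}_{n_{2j}}$-admissible, so clause (iv) of Definition \ref{Ndef6} yields
\[
\phi=\frac{1}{m_{2j}}\sum_{k=1}^{n_{2j}}f_k\in K_d,\qquad w(\phi)=m_{2j}.
\]

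Second, define $y=\frac{m_{2j}}{n_{2j}}\sum_{k=1}^{n_{2j}}x_k$ and set $x=y/\phi(y)$, so that $\phi(x)=1$ is automatic. By construction $\phi(y)\ge 1$, and Proposition \ref{Nprop7} applied to $\frac{1}{n_{2j}}\sum x_k$ with $C=3$ gives $\|y\|\le 6$; hence $1\le\|x\|\le 6$. The range condition $\ran(x)=\ran(\phi)$ can then be arranged by restricting $\phi$ to $\ran(y)$ (still in $K_d$ by Definition \ref{Ndef6}(iii)) or, equivalently, by extending the outermost $f_k$'s to the full ranges of $x_1$ and $x_{n_{2j}}$.

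Third, the decay conditions in Definition \ref{Ndef10}(i) follow from a second application of Proposition \ref{Nprop7}. For $\psi\in K_d$ with $w(\psi)=m_i<m_{2j}$, one obtains $|\psi(y)|\le m_{2j}\cdot 9/(m_{2j}m_i)=9/m_i$, hence $|\psi(x)|\le 9/w(\psi)\le 3C/w(\psi)$ with $C=6$. For $w(\psi)=m_i>m_{2j}$, one uses that $m_i\ge m_{2j+1}=m_{2j}^2$ (Definition \ref{Ndef4}), so the $C/m_i$ term from Proposition \ref{Nprop7}, scaled by $m_{2j}$, together with the $C\e$ error term (also scaled by $m_{2j}$), is controlled by the initial choice of $\e$.

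The main point requiring care is the calibration of $\e$ so that all three clauses of Definition \ref{Ndef10} come out with the advertised constant $C=6$, especially the quantitative estimate for large-weight $\psi$; once $\e$ is fixed small enough at the outset, the rest of the construction is essentially forced by clause (iv) of Definition \ref{Ndef6} together with Proposition \ref{Nprop7}.
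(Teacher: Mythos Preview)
Your proposal is correct and follows essentially the same route as the paper: pick a $(3,\e)$ R.I.S.\ in $Z$ with $\e$ of order $m_{2j}^{-3}$, norm each $x_k$ by some $f_k\in K_d$ supported in $\ran x_k$, form $\phi=\frac{1}{m_{2j}}\sum f_k$ and the average $y=\frac{m_{2j}}{n_{2j}}\sum x_k$, and normalise so that $\phi(x)=1$; the paper writes this normalisation as $x=\theta y$ with $\theta\in[1/6,1]$, which is exactly your $x=y/\phi(y)$. The only cosmetic difference is that the paper secures $\ran x=\ran\phi$ up front by choosing $\ran x_k^*=\ran x_k$, whereas you adjust the ranges after the fact; both work.
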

 \begin{proof}[\bf Proof.]
 From Proposition \ref{Nprop6} there exists $(x_{k})_{k=1}^{n_{2j}}$ a
 $(3,\e)$-R.I.S. in $Z$ with $\e\le \frac{1}{2m_{2j}^{3}}$ and $\|x_k\|\ge 1$. Choose
 $x_{k}^{*}\in K_{d}$ with $x_{k}^{*}(x_{k})\ge 1$ and $\ran
 x_{k}^{*}=\ran x_{k}$. Then Proposition \ref{Nprop7} yields
 that for some $\theta$ with $\frac{1}{6}\le \theta\le 1$,
 \[\big(\theta\frac{m_{2j}}{n_{2j}}\sum_{k=1}^{n_{2j}}x_{k},
 \frac{1}{m_{2j}}\sum_{k=1}^{n_{2j}}x_{k}^{*}\big)\] is a $(6,2j)$
 exact pair.
 \end{proof}

 \begin{definition}\label{Ndef11}[Dependent sequences.]
 A double sequence $(x_k,x_k^*)_{k=1}^{n_{2j+1}}$ with $x_k\in
 \eqs_{d}$ and $x_k^*\in K_{d}$ is said to be a $(C,2j+1)$
 dependent sequence  if there exists a sequence
 $(2j_k)_{k=1}^{n_{2j+1}}$ of even integers such that the following
 conditions are fulfilled:
 \begin{enumerate}
  \item[(i)] $(x^*_k)_{k=1}^{n_{2j+1}}$ is an $n_{2j+1}$ special
               sequence with $w(x^*_{k})=m_{2j_{k}}$
               for all $1\le k\le n_{2j+1}$.
  \item[(ii)] Each $(x_{k},x_{k}^*)$ is a $(C,2j_{k})$ exact pair.
  \end{enumerate}
 \end{definition}

 \begin{remark}\label{Nrem4}
 It follows easily, that if $(x_k,x_k^*)_{k=1}^{n_{2j+1}}$ is a
 $(C,2j+1)$ dependent sequence then the sequence
 $(x_k)_{k=1}^{n_{2j+1}}$ is a $(3C,\e)$ R.I.S. where
 $\e=\frac{1}{n_{2j+1}^2}$.
 \end{remark}

 \begin{proposition}\label{Nprop9} Let
   $j\in\N$. Then for every pair of block subspaces $Z,W$ of
  $\eqs_{d}$ there exists  a  $(6,2j+1)$ dependent sequence
  $(x_k,x_k^*)_{k=1}^{n_{2j+1}}$ with  $x_{2k-1}\in Z$
    and $x_{2k}\in W$ for all $k$.
 \end{proposition}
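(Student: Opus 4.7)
The plan is to build the dependent sequence inductively, using the coding function $\sigma$ to prescribe the even weight $2j_i$ at each step and then invoking Proposition \ref{Nprop8} on the appropriate block subspace ($Z$ when $i$ is odd, $W$ when $i$ is even) to realize that weight as the weight of an exact pair. For the base step, I would choose $k_1 \in \Omega_1$ large enough that $m_{2k_1}^{1/2} > n_{2j+1}$, set $2j_1 := 2k_1$, and apply Proposition \ref{Nprop8} to $Z$ with parameter $j_1$ to obtain a $(6,2j_1)$ exact pair $(x_1,x_1^*)$ with $x_1 \in Z$. By Definition \ref{Ndef10}, $w(x_1^*)=m_{2j_1}$, and the condition $k_1\in\Omega_1$ with $m_{2k_1}^{1/2}>n_{2j+1}$ matches the initialization clause of Definition \ref{Ndef5}.

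For the inductive step, assume pairs $(x_\ell,x_\ell^*)$ have been produced for $\ell<i$ so that $x_\ell\in Z$ when $\ell$ is odd, $x_\ell\in W$ when $\ell$ is even, and $(x_1^*,\ldots,x_{i-1}^*)$ is an initial segment of an $n_{2j+1}$ special sequence. A standard rational discretization (discussed below) lets us assume $(x_1^*,\ldots,x_{i-1}^*)\in \Q_s$, so we may define $2j_i := \sigma(x_1^*,\ldots,x_{i-1}^*)$; by the defining property of $\sigma$ this is an even integer with $m_{2j_i}>\max\supp x_{i-1}^*$, and in particular $j_i\in\Omega_2$. Set $N:=\max\supp x_{i-1}^*$ and let $Y$ be the tail block subspace of $Z$ (if $i$ is odd) or $W$ (if $i$ is even) obtained by intersecting with $\overline{\spann}\{e_n:n>N\}$. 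Applying Proposition \ref{Nprop8} to $Y$ with parameter $j_i$ produces a $(6,2j_i)$ exact pair $(x_i,x_i^*)$ with $x_i\in Y$ and $\supp x_i^*>\supp x_{i-1}^*$. Since $w(x_i^*)=m_{2j_i}=m_{\sigma(x_1^*,\ldots,x_{i-1}^*)}$, the enlarged tuple $(x_1^*,\ldots,x_i^*)$ is still a valid initial segment of an $n_{2j+1}$ special sequence.

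After $n_{2j+1}$ steps the sequence $(x_k,x_k^*)_{k=1}^{n_{2j+1}}$ satisfies Definition \ref{Ndef11}: clause (i) is precisely what the recursive use of $\sigma$ forces, with $w(x_k^*)=m_{2j_k}$ for every $k$, while clause (ii) is delivered by Proposition \ref{Nprop8} at every step, and the alternation $x_{2k-1}\in Z$, $x_{2k}\in W$ is built into the choice of the tail block subspace at each step.

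The only genuinely delicate point is the rationality requirement encoded in the domain $\Q_s$ of $\sigma$. As is standard in Gowers--Maurey-style constructions, this is handled either by redefining $K_d$ so that it is generated by functionals with rational coefficients (the norm is unchanged on $c_{00}(\N)$), or by perturbing each $x_\ell^*$ slightly inside the $(\mc{A}_{n_{2j_\ell}},\frac{1}{m_{2j_\ell}})$ operation that produced it, which preserves both $w(x_\ell^*)$ and the exact pair property; thereafter the inductive scheme runs exactly as described with no further obstacle.
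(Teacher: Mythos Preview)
Your proof is correct and is exactly the approach the paper has in mind: an inductive application of Proposition~\ref{Nprop8}, alternating between tail block subspaces of $Z$ and $W$ and using $\sigma$ to dictate each successive weight. The rationality worry is in fact moot here, since every element of $K_d$ automatically has rational coefficients (all the weights $\frac{1}{m_j}$ and $\frac{1}{\sqrt{m_{2j-1}}}=\frac{1}{m_{2j-2}}$ are rational), so no perturbation or redefinition is needed.
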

\begin{proof}[\bf Proof.] It follows easily from an inductive application of
              Proposition \ref{Nprop8}.
\end{proof}

 We need the next lemma  in order
 to apply Proposition \ref{Nprop5} with the additional assumption.

\begin{lemma}\label{Nlem0000}
  Let $(x_k,x_k^*)_{k=1}^{n_{2j+1}}$ be a
 $(C,2j+1)$ dependent sequence. Then for every $2j+1$ special functional $f$ and every
 subinterval $I$ of $\{1,2,\ldots, n_{2j+1}\}$ we have that
  $|f(\sum\limits_{k\in I}(-1)^{k+1}x_k)|\le C$.
    \end{lemma}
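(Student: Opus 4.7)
The plan is to write $f = Eh$ with $E$ an interval and $h = m_{2j}^{-1}\sum_{i=1}^{n_{2j+1}} g_i$ for some $n_{2j+1}$ special sequence $(g_i)$ (Remark \ref{Nrem3}), and then bound $|h(Ey)|$, where $y = \sum_{k \in I}(-1)^{k+1}x_k$. The central step is a case analysis obtained by comparing the two $n_{2j+1}$ special sequences $(g_i)$ and $(x_k^*)$ through the tree-like property (Proposition \ref{Nprop2}).

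First I would dispose of the case $g_i = x_i^*$ for every $i$. Here the range identity $\ran x_i^* = \ran x_i$ from Definition \ref{Ndef10}(iii), together with the successivity of the $(x_k)$'s, forces $x_i^*(x_k) = 0$ when $i \neq k$, so $h(Ey) = m_{2j}^{-1} \sum_{k \in I}(-1)^{k+1} x_k^*(Ex_k)$. All but at most two $k$'s satisfy $\ran x_k \subseteq E$ and thus contribute $\pm 1$ that cancel pairwise by alternation, while each of the at most two boundary $k$'s contributes at most $C$ in absolute value (using $\|x_k^*\| \le 1$ and $\|x_k\| \le C$); the total is at most $(1+2C)/m_{2j} \le C$.

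In the contrary case, Proposition \ref{Nprop2} supplies a bifurcation index $k_0$ with $g_i = x_i^*$ for $i < k_0$, with $w(g_{k_0}) = w(x_{k_0}^*)$ though the two functionals differ, and with $w(g_i) \neq w(x_l^*)$ for every $l$ whenever $i > k_0$. Splitting $h = m_{2j}^{-1}(h_{<} + g_{k_0} + h_{>})$ with $h_< = \sum_{i<k_0} g_i$ and $h_> = \sum_{i>k_0} g_i$, the piece $h_<$ is handled verbatim as in the previous paragraph. For the single functional $g_{k_0}$ I bound $|g_{k_0}(Ex_{k_0})| \le C$ directly and, for $k \neq k_0$, apply Definition \ref{Ndef10}(i) since $w(g_{k_0}) \neq m_{2j_k}$; the resulting contributions form a rapidly convergent series in $k$. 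For $h_{>}$, each $g_i$ satisfies $w(g_i) \neq m_{2j_k}$ for every $k$, so Definition \ref{Ndef10}(i) bounds $|g_i(Ex_k)|$ by $\max\!\bigl(3C/w(g_i),\, C/m_{2j_k}^2\bigr)$, and the built-in growth of the weights inside a special sequence (notably $m_{2j_k}^{1/2} > n_{2j+1}$ for the leading element, from Definition \ref{Ndef5}(ii)) makes the double sum $\sum_{i > k_0,\, k \in I} |g_i(Ex_k)|$ small compared to $m_{2j}$.

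The hard part will be this last estimate. One has to split the indices $k$ according to whether $m_{2j_k} < w(g_i)$ or $m_{2j_k} > w(g_i)$ and verify that in both regimes the total, after division by $m_{2j}$, fits inside $C$. A secondary point is to carefully track the boundary contributions arising from the interval $E$ (which cuts at most two of the $x_k$'s), but these are always controlled by the exact pair norm bounds $\|x_k^*\| \le 1$ and $\|x_k\| \le C$.
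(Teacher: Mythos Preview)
Your proposal is correct and follows essentially the same approach as the paper: both arguments write $f$ via an $n_{2j+1}$ special sequence, invoke the tree-like property (Proposition~\ref{Nprop2}) to locate the bifurcation index, exploit the alternating signs on the matching segment where $g_i=x_i^*$, and apply the exact-pair estimates of Definition~\ref{Ndef10}(i) together with the growth condition $w(x_1^*)>n_{2j+1}^2$ from Definition~\ref{Ndef5}(ii) on the non-matching tail. The only difference is organizational: you decompose $h$ by the functional index $i$ into $h_{<}+g_{k_0}+h_{>}$, while the paper fixes $f$ and computes $f(x_k)$ case-by-case in the vector index $k$ (distinguishing $k<t$, $k=t$, $t<k<r$, $k=r$, $k>r$), then splits $I$ into four pieces and sums. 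The paper's ordering makes the double-sum estimate for the tail slightly more transparent (one bounds $|f(x_k)|\le \tfrac{5C}{m_{2j}n_{2j+1}^2}$ for each $k>r$ and then sums over at most $n_{2j+1}$ such $k$), but your splitting by $i$ reaches the same endpoint once you observe that $g_i(x_k)=0$ whenever $i>k_0$ and $k<k_0$ (by range disjointness), so that only $O(n_{2j+1})$ effective pairs $(i,k)$ remain in the tail, each controlled exactly as in the paper.
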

    \begin{proof}[\bf Proof.]
    The functional $f$ takes the form
    \[ f=\frac{1}{m_{2j}}(Ex_t^*+x_{t+1}^*+\cdots+x_{r-1}^*+f_r+f_{r+1}+\cdots+f_d)\]
    where $(x_1^*,x_2^*,\ldots,x_{r-1}^*,f_r,f_{r+1},\ldots,f_{n_{2j+1}})$
    is an $n_{2j+1}$ special sequence
    with\\ $w(f_r)=w(x_r^*)$, $f_r\neq x_r^*$, $E$ is an interval
    of the form $E=[m,\max\supp x_t^*]$ and
      $d\le n_{2j+1}$.

      Using  the definitions of dependent sequences and exact
       pairs we obtain the following.\\
      For  $k<t$ we have that $f(x_k)=0$.\\
      For $k=t$, $|f(x_t)|=\frac{1}{m_{2j}}|Ex_t^*(x_t)|\le \frac{1}{m_{2j}}\cdot \|x_t\|\le \frac{C}{m_{2j}}$.\\
      For $t<k<r$, we get that $f(x_k)=\frac{1}{m_{2j}}x_k^*(x_k)=\frac{1}{m_{2j}}$.\\
      For the case $k=r$ we shall say later.\\
      Let $k$ with $r<k\le n_{2j+1}$. For $i\le r-1$ we have that $\ran(x_i^*)\cap \ran x_k=\emptyset$
      thus $x_i^*(x_k)=0$. Also, the injectivity of the coding function $\sigma$ yields that
      $w(f_i)\neq m_{2j_k}=w(x_k^*)$ for $r\le i\le d$. Setting
      \[  J_k^-=\{i:\; w(f_i)<m_{2j_k}\} \mbox{\quad and \quad} J_k^+=\{i:\; w(f_i)> m_{2j_k}\} \]
      we get that
      \begin{eqnarray*}
      |f(x_k)| & \le & \frac{1}{m_{2j}}\big(\sum\limits_{i\in J_k^-}|f_i(x_k)|+
                                          \sum\limits_{i\in J_k^+}|f_i(x_k)|\big)\\
  & \le & \frac{1}{m_{2j}}\big(\sum\limits_{i\in J_k^-}\frac{3C}{w(f_i)}+
                                          \sum\limits_{i\in J_k^+}\frac{C}{m_{2j_k}^2}\big)\\
      & \le & \frac{C}{m_{2j}}\big( \frac{4}{w(x_1^*)}+n_{2j+1}\cdot\frac{1}{m_{2j_1}^2}\big)\\
         & \le & \frac{C}{m_{2j}}(\frac{4}{n_{2j+1}^2}+n_{2j+1}\cdot\frac{1}{n_{2j+1}^4}\big)\le
         \frac{5C}{m_{2j}}\cdot\frac{1}{n_{2j+1}^2}.
      \end{eqnarray*}
      For $k=r$ using similar arguments it follows that $|f(x_r)|\le \frac{2C}{m_{2j}}$.

     We set $I_1=I\cap\{t\}$,  $I_2=I\cap\{t+1,\ldots, r-1\}$,
     $I_3=I\cap\{r\}$,\\ $I_4=I\cap\{r+1,\ldots, n_{2j+1}\}$ and we conclude that
     \begin{eqnarray*}
     |f(\sum\limits_{k\in I}(-1)^{k+1}x_k)|&\le& \sum\limits_{k\in I_1}|f(x_k)|+
                              |f(\sum\limits_{k\in I_2}(-1)^{k+1}x_k)|\\
                              && \qquad\qquad+
                  \sum\limits_{k\in I_3}|f(x_k)|+
                  \sum\limits_{k\in I_4}|f(x_k)|\\
      &\le&  \frac{C}{m_{2j}}+\frac{1}{m_{2j}}+\frac{2C}{m_{2j}}
      +n_{2j+1}\cdot\frac{5C}{m_{2j}}\cdot\frac{1}{n_{2j+1}^2}            \\
      & \le &  C.
     \end{eqnarray*}
    \end{proof}

 \begin{proposition}\label{Nprop10} Let $(x_k,x_k^*)_{k=1}^{n_{2j+1}}$ be a
 $(C,2j+1)$ dependent sequence. Then
  \[
 \|\frac{1}{n_{2j+1}}\sum\limits_{k=1}^{n_{2j+1}}(-1)^{k+1}x_k\|
 \le\frac{4C}{m_{2j+1}}.
 \]
 \end{proposition}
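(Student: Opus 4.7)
The plan is to combine Remark \ref{Nrem4} (which turns the dependent sequence into a R.I.S.) with the basic inequality (Proposition \ref{Nprop5}), using Lemma \ref{Nlem0000} to install the extra hypothesis \eqref{n74} that allows us to use the refined conclusion with norming set $W'_{j_0}$ instead of $W'$, and finally bounding $g$ via Lemma \ref{Nlem0002}. Setting $\lambda_k=(-1)^{k+1}/n_{2j+1}$, so that $|\lambda_k|=1/n_{2j+1}$ and $\sum_k|\lambda_k|=1$, the goal is to estimate $f\bigl(\sum_k\lambda_k x_k\bigr)$ uniformly in $f\in K_d$.

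First, by Remark \ref{Nrem4}, $(x_k)_{k=1}^{n_{2j+1}}$ is a $(3C,\varepsilon)$ R.I.S.\ with $\varepsilon=1/n_{2j+1}^{2}$. Its associated sequence is (the increasing enumeration of) $(2j_k)_{k=1}^{n_{2j+1}}$, and the very first weight $m_{2j_1}=w(x_1^*)$ must satisfy $m_{2j_1}^{1/2}>n_{2j+1}$ by Definition \ref{Ndef5}(ii); in particular $2j_1>2j+1$, so the threshold $2j_0+1<j_1$ in Proposition \ref{Nprop5} is available with $j_0=j$. Next, Lemma \ref{Nlem0000} gives $|f(\sum_{k\in J}(-1)^{k+1}x_k)|\le C$ for every subinterval $J$ and every $2j+1$ special functional $f$, which after dividing by $n_{2j+1}$ reads
\[
\Bigl|f\Bigl(\sum_{k\in J}\lambda_k x_k\Bigr)\Bigr|\le \frac{C}{n_{2j+1}}=C\max_{k\in J}|\lambda_k|\le 3C\Bigl(\max_{k\in J}|\lambda_k|+\varepsilon\sum_{k\in J}|\lambda_k|\Bigr),
\]
so the auxiliary hypothesis \eqref{n74} of Proposition \ref{Nprop5} is satisfied with R.I.S.\ constant $3C$ and $j_0=j$.

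Applying Proposition \ref{Nprop5} with this $j_0$, I obtain, for an arbitrary $f\in K_d$, a functional $g\in W'_j$ (where $W'_j$ is built without the $(\mc A_{4n_{2j+1}},1/m_{2j})$ operation) such that
\[
\Bigl|f\Bigl(\frac{1}{n_{2j+1}}\sum_{k=1}^{n_{2j+1}}(-1)^{k+1}x_k\Bigr)\Bigr|\le 3C\Bigl(g\Bigl(\frac{1}{n_{2j+1}}\sum_{k=1}^{n_{2j+1}}e_k\Bigr)+\frac{1}{n_{2j+1}^{2}}\Bigr).
\]
Lemma \ref{Nlem0002} now supplies the bound $g\bigl(\tfrac{1}{n_{2j+1}}\sum_{k=1}^{n_{2j+1}}e_k\bigr)\le 1/m_{2j+1}$. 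Combining these two estimates,
\[
\Bigl|f\Bigl(\frac{1}{n_{2j+1}}\sum_{k=1}^{n_{2j+1}}(-1)^{k+1}x_k\Bigr)\Bigr|\le 3C\Bigl(\frac{1}{m_{2j+1}}+\frac{1}{n_{2j+1}^{2}}\Bigr)\le \frac{4C}{m_{2j+1}},
\]
since $n_{2j+1}^{2}\ge 3m_{2j+1}$ by the growth conditions of Definition \ref{Ndef4}. Taking the supremum over $f\in K_d$ yields the desired norm estimate.

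The main obstacle is the bookkeeping that justifies using Proposition \ref{Nprop5} with the stronger conclusion $g\in W'_j$ rather than $g\in W'$: without this refinement, the natural bound on $g(\tfrac{1}{n_{2j+1}}\sum e_k)$ in $W'$ would be of order $1/m_{2j}$ (produced by a $(\mc A_{4n_{2j+1}},1/m_{2j})$ special operation itself), yielding only $\|\cdot\|\le 3C/m_{2j}$, which is far too weak. Lemma \ref{Nlem0000} is exactly what kills the offending $2j+1$ special operation by the alternation of signs and the tree-like property of special sequences, and it is the verification of its hypotheses together with the threshold $2j+1<2j_1$ that carries the real content of the argument.
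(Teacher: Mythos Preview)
Your proof is correct and follows essentially the same approach as the paper: Remark \ref{Nrem4} gives the $(3C,\varepsilon)$ R.I.S., Lemma \ref{Nlem0000} verifies hypothesis \eqref{n74} for $j_0=j$, Proposition \ref{Nprop5} produces $g\in W'_j$, and Lemma \ref{Nlem0002} bounds $g$ by $1/m_{2j+1}$. Your explicit verification of the threshold $2j_0+1<2j_1$ and your closing remark on why the refined space $W'_j$ (rather than $W'$) is essential are both accurate and add clarity that the paper's own proof leaves implicit.
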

 \begin{proof}[\bf Proof.]
 The sequence $(x_k)_{k=1}^{n_{2j+1}}$ is a $(3C,\e)$ R.I.S. for $\e=\frac{1}{n_{2j+1}^2}$
 (Remark \ref{Nrem4}).
 It follows from Lemma \ref{Nlem0000} that the additional assumption of
  Proposition \ref{Nprop5}   concerning the number $j_0=j$ and the sequence
  $(\frac{(-1)^{k+1}}{n_{2j+1}})_{k=1}^{n_{2j+1}}$ is fulfilled.
   Thus applying  Proposition \ref{Nprop5} and
    Lemma  \ref{Nlem0002} we get that for every $f\in K_d$ there exists $g\in W'_j$
    such that
   \begin{eqnarray*}
   |f(\frac{1}{n_{2j+1}}\sum\limits_{k=1}^{n_{2j+1}}(-1)^{k+1}x_k)| & \le  &
    3C\big(g(\frac{1}{n_{2j+1}}\sum\limits_{k=1}^{n_{2j+1}}e_k)+\frac{1}{n_{2j+1}^2}\big))  \\
         & \le & 3C (\frac{1}{m_{2j+1}}+\frac{1}{n_{2j+1}^2})\\
      & \le & \frac{4C}{m_{2j+1}}.
      \end{eqnarray*}
   This completes the proof of the proposition.
   \end{proof}

 \begin{theorem}\label{Nth1}
 The space $\eqs_{d}$ is a reflexive HI space.
 \end{theorem}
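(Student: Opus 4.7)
The theorem splits into two assertions: that $\eqs_d$ is HI, and that $\eqs_d$ is reflexive. The HI part is the main content and follows the classical dependent-sequences scheme now that all the necessary ingredients (exact pairs, $(C,2j+1)$-dependent sequences, and the alternating-sum estimate of Proposition \ref{Nprop10}) have been assembled in this section. Reflexivity will be obtained from the R.I.S. small-average machinery of Proposition \ref{Nprop7}.

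For the HI property, fix two block subspaces $Y, W$ of $\eqs_d$ and $\delta>0$. Choose $j\in\N$ large enough that $m_{2j}/24>1/\delta$. By Proposition \ref{Nprop9} there is a $(6,2j+1)$-dependent sequence $(x_k,x_k^*)_{k=1}^{n_{2j+1}}$ with $x_{2k-1}\in Y$ and $x_{2k}\in W$. Set
\[
y=\sum_{k\text{ odd}}x_k\in Y\qquad\text{and}\qquad z=\sum_{k\text{ even}}x_k\in W.
\]
Since $\sqrt{m_{2j+1}}=m_{2j}$ and $(x_k^*)_{k=1}^{n_{2j+1}}$ is by definition an $n_{2j+1}$ special sequence, clause (v) of Definition \ref{Ndef6} guarantees that $\phi=\frac{1}{m_{2j}}\sum_{k=1}^{n_{2j+1}}x_k^*$ lies in $K_d$. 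Using $x_k^*(x_k)=1$ and $\ran x_k=\ran x_k^*$ from the exact-pair definition one computes $\phi(y+z)=n_{2j+1}/m_{2j}$, hence $\|y+z\|\ge n_{2j+1}/m_{2j}$. On the other hand Proposition \ref{Nprop10} gives $\|y-z\|=\|\sum_{k=1}^{n_{2j+1}}(-1)^{k+1}x_k\|\le 24\,n_{2j+1}/m_{2j+1}$. Therefore
\[
\frac{\|y+z\|}{\|y-z\|}\ \ge\ \frac{m_{2j+1}}{24\,m_{2j}}\ =\ \frac{m_{2j}}{24}\ >\ \frac{1}{\delta},
\]
which is the standard HI criterion.

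For reflexivity I would prove that the basis $(e_n)_n$ is both shrinking and boundedly complete, which is equivalent to reflexivity. Shrinking is an immediate consequence of the R.I.S. estimates: were $(e_n)$ not shrinking, one would find $x^*\in\eqs_d^*$, $c>0$, and a normalized block sequence $(x_n)$ with $|x^*(x_n)|\ge c$; extracting a $(3,1/m_{2j}^2)$-R.I.S. subsequence via Lemma \ref{Nlem8} and Proposition \ref{Nprop6} and invoking Proposition \ref{Nprop7} yields averages of norm at most $6/m_{2j}$ on which $x^*$ still attains value at least $c$, forcing $c\le 6\|x^*\|/m_{2j}\to 0$, a contradiction. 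For boundedly completeness I plan to argue dually on differences of partial sums: a seminormalized block sequence $(u_n)$ with uniformly bounded partial sums, after passing to a R.I.S. subsequence and averaging via Proposition \ref{Nprop7}, would yield a $c_0$-like block basis, contradicting the already-established HI property since an HI space contains no unconditional basic sequence.

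I expect the boundedly complete step to be the main technical point. The HI argument and the shrinking part both fall out cleanly from the estimates of this section, whereas ruling out $c_0$-like seminormalized block bases requires either invoking the general non-existence of unconditional basic sequences in HI spaces or a finer dual R.I.S. argument on $\eqs_d^*$; either route is by now standard in the HI literature.
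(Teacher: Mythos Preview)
Your HI argument is essentially the paper's own: the paper likewise chooses a $(6,2j+1)$ dependent sequence straddling the two subspaces, lower-bounds $\|\sum_k x_k\|$ via the special functional $\frac{1}{\sqrt{m_{2j+1}}}\sum_k x_k^*=\frac{1}{m_{2j}}\sum_k x_k^*$, and upper-bounds $\|\sum_k(-1)^{k+1}x_k\|$ by Proposition~\ref{Nprop10}, obtaining the same ratio $m_{2j}/24$. One omission: the HI property concerns arbitrary infinite-dimensional closed subspaces, not just block subspaces; the paper inserts the standard gliding-hump reduction, which you should mention.

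For reflexivity the paper gives no argument at all, simply asserting that $(e_n)$ is shrinking and boundedly complete ``by similar arguments with the corresponding result in \cite{GM1}''. Your sketches are more explicit, but both need adjustment. For shrinking, you cannot ``extract a $(3,\varepsilon)$-R.I.S.\ subsequence'' from an arbitrary normalized block sequence $(x_n)$: condition~(b) of Definition~\ref{Ndef8} is an $\ell_1$-average property that a generic $x_n$ need not satisfy. The fix is that the very functional $x^*$ with $x^*(x_n)\ge c$ already forces $\|\frac{1}{k}\sum_{n\in I}x_n\|\ge c/\|x^*\|$, so simple averages of the $x_n$ are $C$-$\ell_1^k$ averages (with $C=\|x^*\|/c$); Lemma~\ref{Nlem8} and Proposition~\ref{Nprop7} then apply to \emph{those}, and $x^*$ still takes value $\ge c$ on them, giving the contradiction.

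Your boundedly-complete plan is where the real gap lies. Again there is no R.I.S.\ subsequence available, and ``averaging via Proposition~\ref{Nprop7} would yield a $c_0$-like block basis'' does not follow: Proposition~\ref{Nprop7} gives \emph{upper} bounds on R.I.S.\ averages, not lower bounds on the individual $u_n$, and the bounded-partial-sums hypothesis only controls \emph{interval} sums of the $u_n$, not the signed sums you would need to exhibit unconditionality. The clean argument, which is what \cite{GM1} actually does, avoids R.I.S.\ entirely: choose the blocks \emph{consecutively}, $u_k=P_{N_k}-P_{N_{k-1}}$ with $\|u_k\|\ge\delta$ and $\|\sum_{k\le m}u_k\|=\|P_{N_m}\|\le C$; pick $f_k\in K_d$ with $\ran f_k\subset\ran u_k$ and $f_k(u_k)\ge\delta$, and note that $f=\frac{1}{m_j}\sum_{k=1}^{n_j}f_k\in K_d$ gives $C\ge\|\sum_{k=1}^{n_j}u_k\|\ge f(\sum_{k=1}^{n_j}u_k)\ge n_j\delta/m_j$, a contradiction since $n_j/m_j\to\infty$.
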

 \begin{proof}[\bf Proof.]
 The Schauder basis \seq{e}{n} of the space $\eqs_d$ is boundedly complete
 and shrinking (this follows by similars arguments with the corresponding
 result in \cite{GM1}). Therefore $\eqs_{d}$ is a reflexive space.

 Let $Z,W$ be a pair of infinite dimensional subspaces of
 $\eqs_d$. We shall show that for every $\e>0$ there exist
 $z\in Z$, $w\in W$ with $\|z-w\|<\e\|z+w\|$. It is easy to check that this
 yields the HI property of $\eqs_d$. From the well known
 gliding hump argument we may assume that $Z,W$ are block
 subspaces. Then for $j\in\N$, using Proposition \ref{Nprop9},
 we select  $(x_k,x_k^*)_{k=1}^{n_{2j+1}}$ a $(6,2j+1)$
 dependent sequence with
 $x_{2k-1}\in Z$ and $x_{2k}\in W$ for all $k$.
 From Proposition \ref{Nprop10} we get that
 \[\|\frac{1}{n_{2j+1}}\sum\limits_{k=1}^{n_{2j+1}}(-1)^{k+1}x_k\|
 \le\frac{24}{m_{2j+1}}.\]
  On the other hand, since
 $(x_k^*)_{k=1}^{n_{2j+1}}$ is an $n_{2j+1}$ special sequence,
 the functional
 $f=\frac{1}{\sqrt{m_{2j+1}}}\sum\limits_{k=1}^{n_{2j+1}}x_k^*$
 belongs to the norming set $K_d$, while the action of $f$ on the
 vector
 $\frac{1}{n_{2j+1}}\sum\limits_{k=1}^{n_{2j+1}}x_k$
 yields that
 \[ \|\frac{1}{n_{2j+1}}\sum\limits_{k=1}^{n_{2j+1}}x_k\|\ge \frac{1}{\sqrt{m_{2j+1}}}.  \]

 Thus setting $z=\sum\limits_{k=1}^{n_{2j+1}/2}x_{2k-1}$ and
 $w=\sum\limits_{k=1}^{n_{2j+1}/2}x_{2k}$ we get that
 $z\in Z$, $w\in W$ and $\|z-w\|\le\frac{24}{\sqrt{m_{2j+1}}}\|z+w\|$
 which for sufficiently large $j$ yields the desired result.

 Therefore the space $\eqs_d$ is HI.
 \end{proof}

\section{A class of bounded diagonal operators on $\eqs_{d}$}\label{Nsec3}

 In this section we present the construction of a class of bounded
 diagonal operators on the space $\eqs_d$. These operators are of the form
 $\sum\limits_k\lambda_k D_{j_k}$ where $\{j_k:\;k\in\N\}$ is a lacunary
 set and \seq{\lambda}{k} is any bounded sequence of real numbers. Each
 $D_{j_k}$ is of the form
 $D_{j_k}(x)=\frac{1}{m_{j_k}}\sum\limits_{i=1}^{p_k}I^{j_k}_ix$.
 We pass to the details of the construction.

  Let $\{I_i^j:\;  1\le i\le p_j,\;j=1,2,\ldots\}$ be
 any  family of intervals of $\N$ such that, for every $j$
 \begin{equation*}
 I_1^j<I_2^j<\cdots <I_{p_j}^j<I_1^{j+1}.
 \end{equation*}
 For each $j\in {\mathbb N}$, we define the diagonal operator
 $D_j:\eqs_{d}\rightarrow \eqs_{d}$ by the rule
 \begin{equation*}
 D_j(x)=\frac{1}{m_j}\sum_{i=1}^{p_j}I_i^jx.
 \end{equation*}
 We also define
 \[\alpha_j(x)=\frac{1}{m_j}\sum_{i=1}^{p_j}\| I_i^jx\|\]
 and we observe that for every $j\in\N$ and $x\in \eqs_{d}$ we
 have that
 \[  \|D_jx\|\le \alpha_j(x)\le \|x\|.\]
 Indeed, the left inequality is obvious
 while, in order to prove the right one,
 for $i=1,\ldots ,p_j$, we select $\phi_i\in K_d$ such that
 $\supp(\phi_i)\subset I_i^j$ and $\phi_i(x)=\|
 I_i^jx\|$. Then, $\phi
 =\frac{1}{m_j}\sum\limits_{i=1}^{p_j}\phi_i\in K_d$, thus
 \[
 \alpha_j(x)= \frac{1}{m_j}\sum_{i=1}^{p_j}\| I_i^jx\|=\phi(x)\le
 \|x\|.\]

 \begin{lemma}\label{Nlem3}
 Let $L\subset {\mathbb N}$ with $\#L\le\min L$.
 Then, for every  $x\in \eqs_d$, we have that
 \[\sum_{j\in L}\alpha_j(x)\le\| x\|.\]
 \end{lemma}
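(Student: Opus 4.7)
The plan is to adapt the tree-building argument from the Claim in Proposition~\ref{Nprop1} to the present setting. The key observation is that this lemma is essentially the $\eqs_d$-analogue of that Claim, with the basis functionals $e_k^*$ replaced by norming functionals $\phi_i^{j_l}\in K_d$ for the blocks $I_i^{j_l}x$.

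Concretely, I would fix $x\in\eqs_d$, enumerate $L=\{j_1<\cdots<j_k\}$ with $k\le j_1$ (setting aside the trivial case $k=1$), and choose $\phi_i^{j_l}\in K_d$ with $\supp\phi_i^{j_l}\subset I_i^{j_l}$ and $\phi_i^{j_l}(x)=\|I_i^{j_l}x\|$. Setting $\phi_{j_l}=\frac{1}{m_{j_l}}\sum_{i=1}^{p_{j_l}}\phi_i^{j_l}\in K_d$ (admissible since $p_{j_l}\le n_{j_l}$), one has $\phi_{j_l}(x)=\alpha_{j_l}(x)$, so it suffices to exhibit $\sum_{l=1}^k\phi_{j_l}$ as an element of $K_d$ and then evaluate on $x$.

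The technical core is a rewriting of $\phi_{j_l}$ for $l\ge 2$. Mimicking the Claim of Proposition~\ref{Nprop1}, I would partition $\{1,\ldots,p_{j_l}\}$ into $p_{j_1}$ successive blocks $G_s^l$ of size $n_{j_1}n_{j_1+1}\cdots n_{j_l-1}$ and build
\[\psi_s^{(l)}=\frac{1}{m_{j_1}m_{j_1+1}\cdots m_{j_l-1}}\sum_{i\in G_s^l}\phi_i^{j_l}\]
as a cascade of $(\mc{A}_{n_r},\tfrac{1}{m_r})$ operations (grouping by $n_{j_l-1}$, then $n_{j_l-2}$, \ldots, then $n_{j_1}$ at the top), which gives $\psi_s^{(l)}\in K_d$ of weight $m_{j_1}$ and $\phi_{j_l}=\frac{1}{m_{j_1}}\sum_{s=1}^{p_{j_1}}\psi_s^{(l)}$. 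Combining,
\[\sum_{l=1}^k\phi_{j_l}=\frac{1}{m_{j_1}}\Big(\sum_{i=1}^{p_{j_1}}\phi_i^{j_1}+\sum_{l=2}^k\sum_{s=1}^{p_{j_1}}\psi_s^{(l)}\Big),\]
which is an $(\mc{A}_{n_{j_1}},\tfrac{1}{m_{j_1}})$ operation applied to $k\cdot p_{j_1}$ successive functionals of $K_d$; admissibility holds since $k\cdot p_{j_1}\le j_1\cdot p_{j_1}\le n_{j_1}$, while successiveness follows from $I_i^{j_1}<I_i^{j_2}<\cdots$ together with the successive nature of the blocks $G_s^l$.

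The main obstacle is the bookkeeping in the cascaded construction of $\psi_s^{(l)}$: at each level of the cascade one must check that the group cardinality fits within the corresponding $n_r$ and that the product of the chosen weights telescopes to exactly $m_{j_1}/m_{j_l}$. Both follow cleanly from $m_j=\prod_{i=1}^{j-1}m_i$ (valid for $j\ge 2$) and from the factorization $p_{j_l}/p_{j_1}=n_{j_1}\cdots n_{j_l-1}$, but the indexing must be handled carefully.
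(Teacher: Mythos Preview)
Your proposal is correct and follows essentially the same approach as the paper: choose norming functionals $\phi_i^{j_l}$ for each block, form $\phi_{j_l}=\frac{1}{m_{j_l}}\sum_i\phi_i^{j_l}$, and then invoke the combinatorial rewriting from the Claim in Proposition~\ref{Nprop1} to show $\sum_l\phi_{j_l}\in K_d$. The paper's proof simply says ``as in the proof of Proposition~\ref{Nprop1}'' at the rewriting step, whereas you have spelled out the cascade and the count $k\cdot p_{j_1}\le j_1\cdot p_{j_1}\le n_{j_1}$ explicitly.
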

 \begin{proof}[\bf Proof.] Let $L=\{ j_1,j_2,\ldots ,j_s\}$ with
 $s\le j_1<j_2<\cdots <j_s$. For every $k=1,\ldots ,s$ and
 $i=1,\ldots ,p_{j_k}$ we choose $\phi_i^k\in K_d$ such that
 $\supp(\phi_i^k)\subset I_i^{j_k}$ and $\phi_i^k(x)=\|
  I_i^{j_k}x\|$. Then, for every $k=1,\ldots ,s$, we have that
 $\phi^k=\frac{1}{m_{j_k}}\sum\limits_{i=1}^{p_{j_k}}\phi_i^k\in K_d$ and
 $\phi^k(x)=\alpha_{j_k}(x)$. Moreover, as in the proof of
 Proposition \ref{Nprop1},  the functional
 $f=\sum\limits_{k=1}^s\phi^k$ takes the form
 \[f=\frac{1}{m_{j_1}}\sum_{l=1}^{n_{j_1}}\psi_l\]
  with $(\psi_l)_{l=1}^{n_{j_1}}$ being successive members of $K_d$, hence $f\in K_d$.
   It follows that
 \[\sum_{k=1}^s\alpha_{j_k}(x)=f(x)\le\| x\|.\]
 \end{proof}

 \begin{proposition}\label{Nprop4}
 Let  $M=\{j_k:\;j\in\N\}$ be a subset of $\N$
 such that for every $k$ the following conditions are satisfied:
 \begin{enumerate}
 \item[(i)] $m_{j_{k+1}}\ge 2^{k+1}\cdot n_{{j_k}+1}$.
 \item[(ii)] $m_{j_{k+1}} \ge 2^k\cdot\max I_{p_{j_k}}^{j_k}$.
 \item[(iii)]  $j_k>n_{2k}$.
 \end{enumerate}
 Then for every
 $(\lambda_k)_{k\in\N}\in\ell_{\infty}(\N)$, the operator
 $\sum\limits_k\lambda_k D_{j_k}$  is bounded and strictly singular
  with \[\|\sum\limits_k\lambda_k D_{j_k}\|\le C_0\cdot \sup\limits_k|\lambda_k|\] where
  $C_0=3+\sum\limits_{i=1}^{\infty }\frac{i+1}{m_{2i}}$.
 \end{proposition}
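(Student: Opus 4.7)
The plan is to prove the norm bound $\|\sum_k\lambda_kD_{j_k}\|\le C_0\sup_k|\lambda_k|$; strict singularity then follows since $\eqs_d$ is HI and the operator is a diagonal whose coordinates tend to zero (for each $n$ there is at most one $k$ with $n\in F_k:=\bigcup_l I_l^{j_k}$, so the entry at $e_n$ is either $0$ or $\lambda_k/m_{j_k}$ with $k\to\infty$ as $n\to\infty$), invoking Proposition 1.2 of \cite{AT1}.

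Assume $\sup_k|\lambda_k|=1$, fix $x\in B_{\eqs_d}$ and $f\in K_d$, and aim at $|f(\sum_k\lambda_k D_{j_k}x)|\le C_0$. The starting observation is
\[
f\Bigl(\sum_k\lambda_k D_{j_k}x\Bigr)=\sum_k\lambda_k\psi_k(x),\qquad \psi_k:=\frac{1}{m_{j_k}}\sum_{l=1}^{p_{j_k}}I_l^{j_k}f,
\]
where each $\psi_k$ lies in $K_d$ with $w(\psi_k)=m_{j_k}$: the restrictions $I_l^{j_k}f$ belong to $K_d$ (Definition \ref{Ndef6}(iii)), are successive, and $p_{j_k}\le n_{j_k}$ (from $n_j\ge jp_j$ in Definition \ref{Ndef4}). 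The trivial case $f=\pm e_\ell^*$ yields at most one non-zero term, bounded by $1$.

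In the main case $w(f)=m_I$, the task is to estimate $\sum_k|\psi_k(x)|$, and the bound splits into three contributions. First, $|\psi_k(x)|\le\alpha_{j_k}(x)$ for every $k$; combined with the growth condition (iii) $j_k>n_{2k}$ (which forces the relevant initial segment of $k$'s to be very short relative to $j_1$), Lemma \ref{Nlem3} yields a contribution of at most $2$ in total. Second, condition (ii) $m_{j_{k+1}}\ge 2^k\max I_{p_{j_k}}^{j_k}$ ensures that, when $f=m_I^{-1}(f_1+\cdots+f_d)$ is decomposed, the components $f_r$ whose ranges straddle adjacent intervals $I_l^{j_k}$ contribute a total bounded by a geometric series in $2^{-k}$, hence by $\le 1$. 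Finally, the ``interior'' contributions, where each $f_r$ lies inside a single $I_l^{j_k}$, are handled by descending into the tree of $f$; at every weight level $m_{2i}$ appearing in the tree, the level-set claim from the proof of Lemma \ref{Nlem5} (giving $\#\{n:|g(e_n)|>m_{2i}^{-1}\}\le(4n_{2i-1})^4$ for $g\in W'$), together with condition (i) $m_{j_{k+1}}\ge 2^{k+1}n_{j_k+1}$, caps the contribution at $(i+1)/m_{2i}$. Summing produces the claimed $C_0=3+\sum_{i\ge 1}(i+1)/m_{2i}$.

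The main obstacle is the tree recursion in the interior part: one must simultaneously bookkeep the straddler and interior contributions across all scales, and exploit the three growth conditions in concert so that each weight level $m_{2i}$ contributes at most $(i+1)/m_{2i}$. The even indices in $C_0$ reflect the fact that both the $n_{2j-1}$ special sequences of Definition \ref{Ndef6}(v) and the condition $j_k>n_{2k}$ in (iii) privilege even weight levels.
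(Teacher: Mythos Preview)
Your observation that $\psi_k:=\frac{1}{m_{j_k}}\sum_l I_l^{j_k}f\in K_d$ with $w(\psi_k)=m_{j_k}$ and $f(D_{j_k}x)=\psi_k(x)$ is correct and neat, but the three-piece decomposition you build on it does not hold together as a proof.

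The ``first contribution'' is the clearest gap. You write that $|\psi_k(x)|\le\alpha_{j_k}(x)$ for every $k$ and that Lemma~\ref{Nlem3} together with condition~(iii) bounds this piece by~$2$. But Lemma~\ref{Nlem3} only controls $\sum_{k\in L}\alpha_{j_k}(x)$ for a \emph{finite} set $L$ with $\#L\le\min\{j_k:k\in L\}$; it says nothing about the full infinite sum $\sum_k\alpha_{j_k}(x)$, which can certainly exceed any fixed constant. Condition~(iii) does not rescue this: it tells you $j_1$ is large, not that the tail is small. In the paper the constant~$2$ arises for a quite different reason---there are exactly two exceptional indices $k_0,k_0+1$ (determined by where $w(f)$ falls among the $m_{j_k}$) for which one uses the crude bound $|f(D_{j_k}x)|\le\alpha_{j_k}(x)\le\|x\|$ directly, no Lemma~\ref{Nlem3} needed.

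The ``interior'' part is the real work and you acknowledge it as the main obstacle without carrying it out. The paper does not attempt to bound contributions weight-level by weight-level through a direct tree descent as you suggest. Instead it builds, by induction on $o(f)$, a single functional $g\in W'$ supported on $\N$ (viewed as indexing the $k$'s) such that $|f(D_{j_k}x)|\le\alpha_{j_k}(x)\,g(e_k)+2^{-k}\|x\|$ for all but the two exceptional $k$'s (Proposition~\ref{Nprop3}; conditions~(i) and~(ii) enter via Lemma~\ref{Nlem4}). Only then does the level-set estimate on $g$ (Lemma~\ref{Nlem12}) come in, and it is here---not in your first piece---that Lemma~\ref{Nlem3} and condition~(iii) are used, on finite sets $L_i$ cut out by the thresholds $|g(e_k)|>1/m_{2i}$, with the weights $1/m_{2i-2}$ providing the convergent factor $\sum_i(i+1)/m_{2i}$. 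Without constructing such a $g$ (or an equivalent device that converts the tree structure of $f$ into coefficients in $W'$ acting on the index set), the bookkeeping you describe cannot be closed.
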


    We divide   the proof of Proposition
       \ref{Nprop4} in several steps. The main step
      is done in the following proposition.

 \begin{proposition}\label{Nprop3}
 For every $f\in K_d$ and every interval $I$ there exists $g\in
 W'$ (recall that $W'$ is the norming set of the space
 $T'=T[(\mc{A}_{4n_{i}},\frac{1}{m_{i}})_{i\in\N},\;
 (\mc{A}_{4n_{2j+1}},\frac{1}{m_{2j}})_{j\in\N}]$, see Definition
 \ref{Ndef7}) having nonnegative coordinates, with $\supp g\subset I$, such that for every $x\in
 \eqs_d$ is holds that
  \[ |f(D_{j_k}x)|\le \alpha_{j_k}(x) g(e_k)+\frac{1}{2^k}\|x\|
  \] for all $k\in I$ with the potential exception for
  $k\in\{k_0,k_0+1\}$ where $k_0+1<\supp g$.
   \end{proposition}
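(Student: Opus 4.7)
The plan is to prove the proposition by induction on the order $o(f)$ of the functional $f\in K_d$, closely mirroring the pattern of the basic inequality (Proposition \ref{Nprop5}): the block sequence $(D_{j_k}x)_{k\in I}$ plays the role of the R.I.S., and the auxiliary functional $g\in W'$ is indexed by the abstract coordinates $(e_k)_{k\in I}$ rather than by the original basis of $\eqs_d$.

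For the base case $f=\pm e_l^*$, the ranges $E_k:=\bigcup_{i=1}^{p_{j_k}}I_i^{j_k}$ are strictly successive in $k$, so the coordinate $l$ belongs to at most one $E_{k_*}$. For this $k_*$ we have $|f(D_{j_{k_*}}x)|\le\tfrac{1}{m_{j_{k_*}}}\|I_i^{j_{k_*}}x\|\le\alpha_{j_{k_*}}(x)$ and $f(D_{j_k}x)=0$ for all other $k$. Setting $g=e_{k_*}^*$ (if $k_*\in I$) or $g=0$ completes the base case with no exceptions.

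For the inductive step, decompose $f=\tfrac{1}{w(f)}\sum_{s=1}^d f_s$ with $d\le n_i$ (if $w(f)=m_i$) or $d\le n_{2i+1}$ (if $f$ is $2i+1$-special with $w(f)=m_{2i}$). For each $s$ set $I_s=\{k\in I:E_k\subseteq\ran(f_s)\}$, a subinterval of $I$, and apply the induction hypothesis to $(f_s,I_s)$ to obtain $g_s\in W'$ with $\supp g_s\subseteq I_s$ apart from the two leftmost exceptional indices of $I_s$. Let $S(k)=\{s:\ran(f_s)\cap E_k\ne\emptyset\}$ and $B=\{k\in I:|S(k)|\ge 2\}$ be the split indices; since the $E_k$ are successive and each seam between consecutive $\ran(f_s)$'s lies in at most one $E_k$, we have $|B|\le d$. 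Assemble $g=\tfrac{1}{w(f)}\bigl(\sum_s g_s+\sum_{k\in B}e_k^*+\sum_{k\in\mathrm{Exc}}e_k^*\bigr)$, where $\mathrm{Exc}$ gathers the left-exceptional indices of the $I_s$ for $s\ge 2$; the total number of outer summands is bounded by $4n_i$ (respectively $4n_{2i+1}$), so $g\in W'$. Only the two leftmost exceptional indices of $I_1$ remain uncovered, and these form the promised $k_0,k_0+1<\supp g$.

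The main obstacle is verifying that each added $e_k^*$ atom actually provides a sufficient upper bound. For a split $k$ with $|S(k)|\ge 2$, the crude estimate $|f(D_{j_k}x)|\le\tfrac{|S(k)|}{w(f)}\alpha_{j_k}(x)$ must be decomposed as $\alpha_{j_k}(x)g(e_k)+\tfrac{1}{2^k}\|x\|$; this requires the growth conditions (i)--(iii) on the sequence $(j_k)$ to absorb the excess factor $|S(k)|-1$ into $\tfrac{1}{2^k}\|x\|$, which is feasible because $m_{j_k}$ dominates $2^k\cdot\max\supp f$ once $k$ is past a small initial segment (namely the two potential exceptions). The additional structural claim---that if $f$ is an $(\mathcal{A}_{n_i},\tfrac{1}{m_i})$ result then $g$ can be taken as an $(\mathcal{A}_{4n_i},\tfrac{1}{m_i})$ result---follows directly from the outer layer of the assembly.
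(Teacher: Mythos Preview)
Your inductive scaffolding is the same as the paper's, but the core step is mishandled in two linked places.

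\textbf{The split indices.} For $k\in B$ you propose the crude bound
$|f(D_{j_k}x)|\le\frac{|S(k)|}{w(f)}\alpha_{j_k}(x)$
and then claim the excess $\frac{|S(k)|-1}{w(f)}\alpha_{j_k}(x)$ can be absorbed into $\frac{1}{2^k}\|x\|$ via the growth conditions. This fails: $|S(k)|$ can be as large as $d$, and for $w(f)=m_j$, $d\le n_j$ one has $\frac{d-1}{w(f)}\approx\frac{n_j}{m_j}$, which is enormous and certainly not $\le 2^{-k}$. The justification you offer (``$m_{j_k}$ dominates $2^k\cdot\max\supp f$'') is condition~(ii), which bounds $\max I^{j_k}_{p_{j_k}}$ against $m_{j_{k+1}}$ and is irrelevant here. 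What is actually needed is Lemma~\ref{Nlem4}(i): when $w(f)\le m_{j_{k-1}}$ one decomposes at the level of the sub-intervals $I_i^{j_k}$ (not at the level of the blocks $E_k$), observes that at most $d\le n_{j_{k-1}+1}$ of them are split by the pieces of $f$, and uses condition~(i) to get $\frac{d}{m_{j_k}}\|x\|\le\frac{1}{2^k}\|x\|$. This yields $|f(D_{j_k}x)|\le\frac{1}{w(f)}\alpha_{j_k}(x)+\frac{1}{2^k}\|x\|$ directly for the whole $f$, with no $|S(k)|$ factor.

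\textbf{The exceptional indices.} You declare the new exceptions to be ``the two leftmost exceptional indices of $I_1$'', inherited from the induction on $f_1$. This is wrong: the exceptions must be the pair $k_0,k_0+1$ determined by $m_{j_{k_0}}\le w(f)<m_{j_{k_0+1}}$, i.e.\ by the weight of the \emph{current} $f$. The reason is that Lemma~\ref{Nlem4}(i) only applies when $k\ge k_0+2$; for $k<k_0$ one instead uses Lemma~\ref{Nlem4}(ii) (this is where condition~(ii) enters) to get $|f(D_{j_k}x)|\le\frac{1}{2^k}\|x\|$ with no $\alpha$-term; and $k_0,k_0+1$ are genuinely uncontrolled. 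The paper then sets $g=[k_0+2,\infty)g'$ to enforce $k_0+1<\supp g$. Your scheme never identifies this $k_0$ and gives no mechanism for the range $k\le k_0+1$ when such $k$ happen to lie in $B$ or outside all $I_s$.

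In short, the missing ingredient is Lemma~\ref{Nlem4}: both its parts are essential, and they dictate where the two exceptions sit.
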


 \noindent For the proof, we need the following Lemma.

 \begin{lemma}\label{Nlem4} Let $k\in \N$,  $\phi\in K_d$ and $x\in \eqs_d$.
 \begin{enumerate}
 \item[(i)] If $w(\phi)\le m_{j_{k-1}}$ then
 \[ |\phi(D_{j_k}(x)) |=\left |\phi\left
 (\frac{1}{m_{j_k}}\sum_{i=1}^{p_{j_k}}I_i^{j_k}x\right )\right
 |\le\frac{1}{w(\phi )}\alpha_{j_k}(x)+\frac{1}{2^k}\| x\|.\]
 \item[(ii)]
 If $w(\phi)\ge m_{j_{k+1}}$ then
 $|\phi(D_{j_k}(x))|\le\frac{1}{2^k}\|x\|$.
 \end{enumerate}
 \end{lemma}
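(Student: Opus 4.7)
My plan is to handle the two parts of the lemma with two different estimates: part (i) by a standard decomposition of the functional, and part (ii) by an $\ell_\infty$--$\ell_1$ duality argument. In both parts, the point is to extract the correct power of $\frac{1}{2^k}$ from the growth conditions (i) and (ii) of Proposition~\ref{Nprop4}.

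For part (i), I would write $\phi = \frac{1}{w(\phi)}(f_1+\cdots+f_d)$ with $f_1<\cdots<f_d$ in $K_d$, where $d\le n_{j_{k-1}+1}$ (this uses Remark~\ref{Nrem3}: either $w(\phi)=m_j$ with $d\le n_j\le n_{j_{k-1}}$, or $\phi$ is a $2j+1$ special functional with $w(\phi)=m_{2j}$ and $d\le n_{2j+1}\le n_{j_{k-1}+1}$). Then I would split the index set $\{1,\dots,p_{j_k}\}$ into $G=\{i: I_i^{j_k}\ \text{meets at most one}\ \supp f_r\}$ and $B=\{1,\dots,p_{j_k}\}\setminus G$. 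Because the $f_r$'s are successive, there are only $d-1$ ``gaps'' between consecutive supports, and since the $I_i^{j_k}$'s are themselves pairwise disjoint intervals, each such gap lies in at most one $I_i^{j_k}$; hence $|B|\le d-1$. For $i\in G$ one has $|\phi(I_i^{j_k}x)|\le \frac{1}{w(\phi)}\|I_i^{j_k}x\|$ (only one $f_r$ contributes), while for $i\in B$ I would simply use $|\phi(I_i^{j_k}x)|\le \|I_i^{j_k}x\|\le \|x\|$. Summing and multiplying by $\frac{1}{m_{j_k}}$ gives
\[
|\phi(D_{j_k}x)|\le \frac{1}{w(\phi)}\alpha_{j_k}(x)+\frac{d-1}{m_{j_k}}\|x\|.
\]
The growth hypothesis (i) of Proposition~\ref{Nprop4}, shifted by one, yields $m_{j_k}\ge 2^{k}\, n_{j_{k-1}+1}$, so $\frac{d-1}{m_{j_k}}\le \frac{1}{2^k}$, as required.

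For part (ii), the idea is that when $w(\phi)$ is very large, $\phi$ has small $\ell_\infty$-norm, while $D_{j_k}x$ has small $\ell_\infty$-norm (because of the factor $\frac{1}{m_{j_k}}$) and bounded support (contained in $[1,\max I_{p_{j_k}}^{j_k}]$). Writing $\phi=\frac{1}{w(\phi)}\sum_r f_r$ with successive $f_r\in K_d$, the disjointness of supports together with $\|f_r\|_\infty\le 1$ gives $\|\phi\|_\infty\le \frac{1}{w(\phi)}$. On the other hand, $\|D_{j_k}x\|_\infty\le \frac{1}{m_{j_k}}\|x\|_\infty\le \frac{1}{m_{j_k}}\|x\|$ and $\#\supp(D_{j_k}x)\le \max I_{p_{j_k}}^{j_k}$, so $\|D_{j_k}x\|_1\le \frac{\max I_{p_{j_k}}^{j_k}}{m_{j_k}}\|x\|$. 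Combining,
\[
|\phi(D_{j_k}x)|\le \|\phi\|_\infty\,\|D_{j_k}x\|_1\le \frac{\max I_{p_{j_k}}^{j_k}}{w(\phi)\,m_{j_k}}\|x\|,
\]
and assumption (ii) of Proposition~\ref{Nprop4} together with $w(\phi)\ge m_{j_{k+1}}$ bounds $\frac{\max I_{p_{j_k}}^{j_k}}{w(\phi)}\le \frac{1}{2^k}$, finishing the proof.

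The step I expect to require the most care is the counting argument in part (i): verifying that $d\le n_{j_{k-1}+1}$ in the $2j+1$ special case (since the outer weight is $m_{2j}$ but the admissibility bound is $n_{2j+1}$), and checking $|B|\le d-1$ by carefully using that the $I_i^{j_k}$'s are successive and the $f_r$'s have disjoint successive supports. The actual estimates are then straightforward plug-ins of the growth conditions.
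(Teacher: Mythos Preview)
Your proposal is correct and follows essentially the same approach as the paper's proof. In part (i) your sets $G$ and $B$ coincide with the paper's $R_l$'s and $A$ (the paper records the slightly weaker $\#A\le d$ rather than your $|B|\le d-1$, but this is immaterial), and in part (ii) the paper uses the identical $\ell_\infty$--$\ell_1$ duality bound $|\phi(D_{j_k}x)|\le \frac{1}{w(\phi)}\|D_{j_k}x\|_{\ell_1}$ together with condition (ii) on the sequence $(j_k)$.
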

 \begin{proof}[\bf Proof.]

 (i) Let $\phi\in K_d$ with $w(\phi)\le m_{j_{k-1}}$.
 Then $\phi =\frac{1}{w(\phi
 )}\sum\limits_{l=1}^d\phi_l$ where, for some $j\in {\mathbb N}$,
 either $w(\phi )=m_j$ or $w(\phi )=\sqrt{m_j}$ and $d\le n_j$.
 Since $w(\phi )\le m_{j_{k-1}}$, in either case we get that
 $d\le n_{j_{k-1}+1}$.

 For $l=1,\ldots ,d$, we set
 \[ R_l  = \big\{ i :\;\ran(\phi_l)\cap I_i^{j_k}\neq\emptyset \mbox{ and }
                       \ran(\phi_{l'})\cap I_i^{j_k}=\emptyset  \mbox{ for }l'\neq l
                       \big\}.
      \]
 We also set
 \[
 A=
  \big\{ i\in\{ 1,\ldots ,p_{j_k}\}:\;\ran(\phi_l)\cap
  I_i^{j_k}\neq\emptyset\mbox{ for at least two }l\big\}.
 \]
  It is easy to see that $\#A\le d\le
 n_{j_{k-1}+1}$. For every $x\in \eqs_d$ we get that
 \begin{eqnarray*}
 |\phi (D_{j_k}(x))| &=&  \big|\frac{1}{m_{j_k}}\sum_{i\in A}\phi
 (I_i^{j_k}x)+\frac{1}{w(\phi)}\frac{1}{m_{j_k}}\sum_{l=1}^d\phi_l
 \big(\sum_{i\in R_l}I_i^{j_k}x \big)\big|\\
 &\le & \frac{1}{m_{j_k}}\sum_{i\in A}\| I_i^{j_k}x\|
 +\frac{1}{w(\phi )}\frac{1}{m_{j_k}}\sum_{i=1}^{p_{j_k}}\|
 I_i^{j_k}x\|\\
 &\le & \frac{1}{m_{j_k}}n_{j_{k-1}+1}\| x\| +\frac{1}{w(\phi
 )}\alpha_{j_k}(x).
 \end{eqnarray*}
 From property (i) of the sequence $(j_k)_{k=1}^{\infty }$, we get
 that
 \[|\phi (D_{j_k}(x))|\le \frac{1}{2^k}\| x\|+\frac{1}{w(\phi
 )}\alpha_{j_k}(x).\]

 (ii) Let now $\phi\in K_d$ with $w(\phi)\ge m_{j_{k+1}}$ and
  $x\in \eqs_d$. We have that
  \begin{eqnarray*}
\big | \phi \big ( D_{j_k}(x)\big )\big | &= & \left |\phi\left
(\frac{1}{m_{j_k}}\sum_{i=1}^{p_{j_k}}I_i^{j_k}x\right )\right |
 \le  \frac{1}{m_{j_k}}\frac{1}{w(\phi)}\left\|
\sum_{i=1}^{p_{j_k}}I_i^{j_k}x\right\|_{\ell_1}\\
&\le & \frac{1}{w(\phi)} \cdot\max I_{p_{j_k}}^{j_k}\cdot\|
x\|_{\infty }
\le  \frac{1}{m_{j_{k+1}}}\cdot\max I_{p_{j_k}}^{j_k}\cdot\| x\|\\
 &\le &\frac{1}{2^k}\|x\|
\end{eqnarray*}
where the last inequality follows from property (ii) of the
sequence $(j_k)_{k=1}^{\infty}$.
 \end{proof}

  \begin{proof}[\bf Proof of Proposition \ref{Nprop3}.]
 For each $k=1,2,\ldots$ let $I^k$ be the minimal interval
 containing $\bigcup\limits_{i=1}^{p_{j_k}}I_i$. We proceed to the
 proof by induction on the order $o(f)$ of the functional $f$.

 If $o(f)=1$, i.e. if $f=\pm e_r^*$, then, if $r\in I^k$ for some
 $k\in I$ we set $g=e_k^*$, otherwise we set $g=0$.
 Suppose now that the conclusion holds for every functional in
 $K_d$ having order less than $q$ and consider $f\in K_d$ with
 $o(f)=q$. Then
 $f=\frac{1}{w(f)}(f_1+f_2+\cdots+f_d)$ with $o(f_i)<q$ for each
 $i$, while either $w(f)=m_j$ and $d\le n_j$ for some $j$, or $f$
 is a $2j+1$ special functional, in which case
 $w(f)=\sqrt{m_{2j+1}}=m_{2j}$ and $d\le n_{2j+1}$.

 For $i=1,\ldots, d$ we set
 \[ I_i=\big\{k\in I:\; \ran(f_i)\cap I^k\neq\emptyset \mbox{ and }
                    \ran(f_{i'})\cap I^k=\emptyset\mbox{ for
                    }i'\in I\setminus\{i\}\big\}.\]
 We also set
 \[ I_0=\big\{k\in I:\;\ran(f_i)\cap I^k\neq\emptyset\mbox{ for at
 least two }i\big\}\]
 and we observe that $\#I_0\le d$.

 Let now $k_0\in \N$ be such that $m_{j_{k_0}}\le w(f)<
 m_{j_{k_0+1}}$ (the modifications in the rest of the proof are
 obvious if no such $k_0$ exists, i.e. if $w(f)<m_{j_1}$).
 For $k<k_0$, Lemma \ref{Nlem4} (ii) yields that $|f(D_{j_k}(x))|\le
 \frac{1}{2^k}\|x\|$ for every $x\in \eqs_d$, while for $k>k_0+1$, Lemma
 \ref{Nlem4} (i) yields that $|f(D_{j_k}(x))|\le
 \frac{1}{w(f)}\alpha_{j_k}(x)+\frac{1}{2^k}\|x\|$ for every $x\in \eqs_d$.

 For each $i=1,\ldots,d$ from our inductive assumption there exists
 $g_i\in W'$ with $\supp g_i\subset I_i$ such that
 \[  |f(D_{j_k}(x))|\le
 \frac{1}{w(f)}\alpha_{j_k}(x)+\frac{1}{2^k}\|x\|  \]
 for all $k\in I_i$, with the potential exception for $k\in
 \{k_i,k_i+1\}$ where $k_i+1<\supp g_i$. For the rest of the proof
 suppose that $k_i,k_i+1\in I_i$ are indeed exceptions to the
 above inequality.

 We set
 \[
 g=\frac{1}{w(f)}\big(\sum\limits_{i=1}^d(e_{k_i}^*+e_{k_i+1}^*+g_i)
  +\sum\limits_{k\in I_0}e_k^*\big)\]
  and $g=[k_0+2,+\infty)g'$. The family
  $\{e_{k_i}^*, \;e_{k_i+1}^*,\;
  g_i,\;i=1,\ldots,d\}\cup\{e_k^*:\;k\in I_0\}$ consists of
  successive functionals belonging to $W'$, while its cardinality
  does not exceed $4d$. Thus the functional $g'$ belongs to $W'$
  hence the same holds for the functional $g$.
 We have to check that the functional $g$ satisfies the conclusion
 of the proposition.

 Let $x\in \eqs_d$. For $k<k_0$, as we have observed earlier, we have
 that $|f(D_{j_k}(x))|\le
 \frac{1}{2^k}\|x\|.$ The numbers $k_0,k_0+1$, if belong to $I$,
 are the potential exceptions to the required inequality; observe
 also that $k_0+1<\supp g$.
 Let now $k\in I$ with $k>k_0+1$. We distinguish four  cases.\\
  {\bf Case 1.}\quad $k\in \{k_i,k_i+1\}\subset I_i$ for some $i\in
 \{1,\ldots,d\}$.\\
 Then
 \[ |f(D_{j_k}(x))|=
 \frac{1}{w(f)}|f_i(D_{j_k}(x))|\le
 \frac{1}{w(f)}\|D_{j_k}(x)\|\le
 \frac{1}{w(f)}\alpha_{j_k}(x)=\alpha_{j_k}(x)g(e_k).\]
 {\bf Case 2.}\quad $k\in I_i\setminus\{k_i,k_i+1\}$ for some
 $i\in\{1,\ldots,d\}$.\\
 Then
  \begin{eqnarray*} |f(D_{j_k}(x))| &= &
 \frac{1}{w(f)}|f_i(D_{j_k}(x))|\le
\frac{1}{w(f)}\big(
\alpha_{j_k}(x)g_i(e_k)+\frac{1}{2^k}\|x\|\big)\\
 & \le &
 \alpha_{j_k}(x)g(e_k)+\frac{1}{2^k}\|x\|.
 \end{eqnarray*}
 {\bf Case 3.}\quad $k\in I_0$.\\
 Then, since also $k>k_0+1$ we get that
 \[  |f(D_{j_k}(x))|\le
 \frac{1}{w(f)}\alpha_{j_k}(x)+\frac{1}{2^k}\|x\|=
 \alpha_{j_k}(x)g(e_k)+\frac{1}{2^k}\|x\|.\]
 {\bf Case 4.}\quad $k\in I\setminus \bigcup\limits_{i=0}^dI_i$.\\
 Then $|f(D_{j_k}(x))|=0$.

 The proof of the proposition is complete.
 \end{proof}

 \begin{lemma}\label{Nlem12}
 Let $g\in W'$ and $x\in\eqs_d$. Then
 \[ \sum\limits_{k=1}^{\infty}\alpha_{j_k}(x)|g(e_k)|\le C_1
 \|x\|\]
 where $C_1=\sum\limits_{i=1}^{\infty }\frac{i+1}{m_{2i}}$.
 \end{lemma}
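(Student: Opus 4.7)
The strategy is to bound $S:=\sum_{k}\alpha_{j_k}(x)|g(e_k)|$ by partitioning $\supp g$ according to the size of $|g(e_k)|$, then applying Lemma \ref{Nlem3} to each level set. Since $W'$ is closed under coordinate-wise sign changes we may assume $g(e_k)\ge 0$; and since the case $g=\pm e_{k_0}^*$ reduces at once to $S=\alpha_{j_{k_0}}(x)\le\|x\|\le C_1\|x\|$, we may further assume $g(e_k)\le 1/m_2$ for every $k$. Define $E_i=\{k:\,1/m_{2i+2}<g(e_k)\le 1/m_{2i}\}$ for $i\ge 1$; these partition $\supp g$.

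The first, routine, estimate is $\#E_i\le (4n_{2i+1})^4$, obtained as in the Claim inside the proof of Lemma \ref{Nlem0002}. Setting $F=\{k:g(e_k)>1/m_{2i+2}\}\supset E_i$, no weight $\ge m_{2i+2}$ can appear on a path of the tree of $g$ to an element of $F$ (otherwise the product of weights would force $g(e_k)\le 1/m_{2i+2}$), so after pruning, $g|_F$ is represented inside the norming set of $T[(\mc{A}_{4n_l},1/m_l)_{l\le 2i+1},\,(\mc{A}_{4n_{2l+1}},1/m_{2l})_{l\le i}]$, whose dual embeds in $\ell_q$ for $q=\log_{m_{2i}}(4n_{2i+1})$ by the standard induction plus H\"older argument. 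Hence $1\ge\|g|_F\|_q\ge(1/m_{2i+2})(\#F)^{1/q}$, giving $\#F<m_{2i+2}^q=m_{2i}^{4q}=(4n_{2i+1})^4$.

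The more delicate estimate is $\sum_{k\in E_i}\alpha_{j_k}(x)\le(i+1)\|x\|$. Enumerate $E_i=\{k^1<\cdots<k^s\}$ and partition it greedily into consecutive blocks $L_1,\ldots,L_R$: take $L_1=\{k^1,\ldots,k^{s_1}\}$ with $s_1=\min(s,j_{k^1})$, then $L_{r+1}$ begins at $k^{s_r+1}$ and has length $\min(s-s_r,j_{k^{s_r+1}})$. By construction $\#L_r\le j_{\min L_r}$, so applying Lemma \ref{Nlem3} to the set $\{j_k:k\in L_r\}$ (whose cardinality and minimum inherit the inequality because $k\mapsto j_k$ is strictly increasing) yields $\sum_{k\in L_r}\alpha_{j_k}(x)\le\|x\|$. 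The crucial claim is $R\le i+1$: since $j_k>n_{2k}$ by hypothesis (iii) and $(n_j)$ grows super-exponentially, after $r$ blocks the covered count dominates an iterated tower $n_{2\cdot n_{2\cdot n_{\cdots}}}$ of height $r$, which already surpasses $(4n_{2i+1})^4\ge s$ by the time $r=i+1$.

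Combining,
\[S=\sum_{i\ge 1}\sum_{k\in E_i}\alpha_{j_k}(x)g(e_k)\le\sum_{i\ge 1}\frac{1}{m_{2i}}\cdot(i+1)\|x\|=C_1\|x\|,\]
as required. The main obstacle is the tower-counting bound $R\le i+1$ for the greedy partition, which rests decisively on the hierarchical growth hypothesis (iii) on $(j_k)$ together with the super-exponential growth of $(n_j)$ built into Definition \ref{Ndef4}.
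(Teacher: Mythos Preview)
Your approach is correct in outline and reaches the same constant, but the crucial step $R\le i+1$ is left as a sketch, and the paper's argument avoids this step entirely by a much simpler split.

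The paper uses essentially the same level-set decomposition (with a shifted index: its $F_i=\{k:1/m_{2i}<|g(e_k)|\le 1/m_{2i-2}\}$ is your $E_{i-1}$) and the cardinality bound $\#F_i\le(4n_{2i-1})^4\le n_{2i}$ from the Claim in Lemma~\ref{Nlem5} (not Lemma~\ref{Nlem0002}, though the arguments are parallel). But instead of a greedy block decomposition, the paper splits $F_i$ into just two pieces:
\[
G_i=\{k\in F_i:\,j_k\le n_{2i}\},\qquad L_i=\{k\in F_i:\,j_k>n_{2i}\}.
\]
On $L_i$, Lemma~\ref{Nlem3} applies in a single stroke because $\#L_i\le\#F_i\le n_{2i}<\min\{j_k:k\in L_i\}$. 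On $G_i$, hypothesis~(iii) gives $n_{2k}<j_k\le n_{2i}$, forcing $k<i$; hence $G_i\subset\{1,\ldots,i-1\}$ and each of its at most $i-1$ terms contributes $\alpha_{j_k}(x)\le\|x\|$. This yields $\sum_{k\in F_i}\alpha_{j_k}(x)\le i\|x\|$ directly, with no recursion.

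Your greedy partition does work, but the bound $R\le i+1$ needs more than the hand-wave you give. The clean inductive statement is: if $R>r$ then $c_r:=|L_1|+\cdots+|L_r|$ satisfies $c_r>n_{2r}$. Indeed $\min L_r=k^{c_{r-1}+1}\ge c_{r-1}+1$, so $|L_r|=j_{\min L_r}>n_{2(c_{r-1}+1)}$, and inductively $c_r>n_{2r}$. Then $R\ge i+2$ would give $c_{i+1}>n_{2i+2}>(4n_{2i+1})^4\ge\#E_i$, a contradiction. Note that the full iterated tower you invoke is not needed; the linear-in-$r$ bound $c_r>n_{2r}$ already suffices, and this is exactly the content of the paper's one-line observation that $G_i\subset\{1,\ldots,i-1\}$.
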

 \begin{proof}[\bf Proof.]
 We set
 \[F_1=\{k:\; \frac{1}{m_2}<|g(e_k)|\}\]
 and for $i=2,3,\ldots$ we set
 \[F_i=\{k:\; \frac{1}{m_{2i}}<|g(e_k)|\le \frac{1}{m_{2i-2}}\}.\]
 Since $m_1=m_2=2$, if $F_1\neq\emptyset$ then $g=\pm e_r^*$ and
 the conclusion  trivially follows (since $C_1\ge 1$).
 Suppose now that $F_1=\emptyset$.
 From  the claim in the proof of Lemma \ref{Nlem5} we get that
 $\#F_i\le (4n_{2i-1})^4\le n_{2i}$ for each $i=2,3,\ldots$.
  We set
 \[L_i=\{ k\in F_i:\; n_{2i}<j_k\}\;mbox{ and }\;G_i=F_i\setminus
 L_i=\{ k\in F_i:\;j_k\le n_{2i}\}.\] Since
 \[\#\{ j_k:k\in L_i\}\le \#L_i\le \#F_i\le n_{2i}<\min\{ j_k:k\in L_i\},\]
  Lemma \ref{Nlem3} yields that
 \[\sum_{k\in L_i}\alpha_{j_k}(x)\le\| x\|.\]
 On the other hand, by Property (iii) of the sequence $(j_k)_{k=1}^{\infty }$,
  we have $n_{2i}<j_i$, and hence, $G_i\subset\{1,\ldots,i-1\}$. Thus, for $i\geq 2$,
 \[\sum_{k\in F_i}\alpha_{j_k}(x)=\sum_{k\in G_i}\alpha_{j_k}(x)+\sum_{k\in L_i}\alpha_{j_k}(x)
 \le (i-1)\|x\|+\|x\|=i\|x\|.\] We conclude that
 \begin{eqnarray*}
 \sum\limits_{k=1}^{\infty}\alpha_{j_k}(x)|g(e_k)|& =
 &\sum\limits_{i=2}^{\infty}\sum\limits_{k\in
 F_i}\alpha_{j_k}(x)|g(e_k)|\\
 & \le  &
 \sum\limits_{i=2}^{\infty}\frac{1}{m_{2i-2}}\big(\sum\limits_{k\in
 F_i}\alpha_{j_k}(x)\big)\\
 & \le
 &\big(\sum\limits_{i=2}^{\infty}\frac{i}{m_{2i-2}}\big)\|x\|=C_1\|x\|.
 \end{eqnarray*}
  \end{proof}

\begin{proof}[\bf Proof of Proposition \ref{Nprop4}.]
 Firstly we shall show the bound of the norm of the
operator  $D=\sum\limits_k\lambda_k D_{j_k}$.
 Let $x\in\eqs_d$. We shall show that for every $f\in K_d$, it
 holds that
 \[ |f(\sum\limits_k\lambda_k D_{j_k}(x))|\le C_0\cdot
 \sup\limits_k|\lambda_k|\cdot \|x\|.\]

 Let $f\in K_d$. From Proposition
  \ref{Nprop3} there exists $g\in W'$ having nonnegative
  coordinates and $k_0\in\N$ such the
  \[  |f(D_{j_k}(x))|\le
  \alpha_{j_k}(x)g(e_k)+\frac{1}{2^k}\|x\|\]
  for all $k\not\in \{k_0,k_0+1\}$.
  Therefore
  \begin{eqnarray*}
  |f(\sum\limits_k\lambda_k D_{j_k}(x))| & \le &
    \sum\limits_k|\lambda_k |\cdot |f(D_{j_k}(x))|\le
    \sup\limits_k|\lambda_k|\cdot \sum\limits_k |f(D_{j_k}(x))|\\
    &\le & \sup\limits_k|\lambda_k|\cdot
    \bigg(
    |f(D_{j_{k_0}}(x))|+|f(D_{j_{k_0+1}}(x))|\\
     & &\quad+\sum\limits_{k\not\in\{k_0,k_0+1\}}
    \big(\alpha_{j_k}(x)g(e_k)+\frac{1}{2^k}\|x\|\big)\bigg)\\
 &\le & \sup\limits_k|\lambda_k|\cdot
    \bigg(\|D_{j_{k_0}}(x)\|+\|D_{j_{k_0+1}}(x)\|
    +\sum\limits_{k=1}^{\infty}\frac{1}{2^k}\|x\|\\
    & &\quad
    +\sum\limits_{k=1}^{\infty}\alpha_{j_k}(x)g(e_k)\bigg)\\
  &\le & \sup\limits_k|\lambda_k|\cdot \bigg(3\|x\|
  +\sum\limits_{k=1}^{\infty}\alpha_{j_k}(x)g(e_k)\bigg).
  \end{eqnarray*}
  From Lemma \ref{Nlem12} we get that
  \begin{equation*}
 \sum\limits_{k=1}^{\infty}\alpha_{j_k}(x)g(e_k)\le C_1\|x\|,
 \end{equation*}
 where $C_1=\sum\limits_{i=1}^{\infty }\frac{i+1}{m_{2i}}$.
 Thus the  operator $D=\sum\limits_k\lambda_k D_{j_k}$ is bounded with $\|D\|\le
 C_0\cdot\sup\limits_k|\lambda_k|$ where $C_0=3+C_1$.

 The fact that the operator $D:\eqs_d\to\eqs_d$ is strictly singular
 follows from the fact
 that $\lim\limits_n D(e_n)=0$ in conjunction to the HI property
 of $\eqs_d$ (see Proposition 1.2 of \cite{AT1}).
 \end{proof}

 \section{The  structure of the space $\mc{L}_{\diag}(\eqs_d)$}\label{Nsec4}

 In this section we
  define the space $J_{T_0}$, which is
 the Jamesification of the space $T_0$ studied in section \ref{Nsec1}.
 We state the finitely block representability of $J_{T_0}$
  in $\eqs_d$ (the proof of this result is presented in the next section)
  and apply it in order to study  the structure of the
  space $\mc{L}_{\diag}(\eqs_d)$ of  diagonal operators
  on $\eqs_d$.
    We start with the definition of
 the space $J_{T_0}$.

 \begin{definition}\label{Ndef12}
 The space $J_{T_0}$ is defined to be the space
  \[
 J_{T_0}=T\big[G,\big(\mc{A}_{n_j},\frac{1}{m_j}\big)_{n\in\N}\big]
 \]
 where $G=\{\pm \chi_I:\;I\text{ finite interval of }\N\}$. This means
 that $J_{T_0}$ is the completion of $(c_{00}(\N),\|\cdot\|_{D_0})$
 where $D_0$ is the
 minimal subset of $c_{00}(\N)$ such that:
 \begin{enumerate}
 \item[(i)] The set $G$ is a subset of $D_0$.
 \item[(ii)] The set $D_0$ is closed in the
 $(\mc{A}_{n_j},\frac{1}{m_j})$ operation for every $j\in\N$.
 \end{enumerate}
 \end{definition}

 \begin{remark}\label{Nrem5}
 An alternative description of the space $J_{T_0}$ is the
 following.
Let $(t_n)_{n\in\N}$ be the standard Hamel basis of $c_{00}(\N)$.
The norm $\|\cdot\|_{J_{T_0}}$ is defined as follows: For every
$x=\sum\limits_{n=1}^{\infty}x(n)t_n\in c_{00}(\N)$ we set
\[\|x\|_{J_{T_0}}=\sup\big \{ \|\sum_{k=1}^l \big ( \sum_{n\in
I_k} x(n)\big )e_k \|_{T_0}, \; l\in \N,\; I_1<I_2<\ldots <I_l
\text{ intervals  of }\N\big \}.\] The space $J_{T_0}$ is the
completion of $(c_{00}(\N), \|\cdot\|_{J_{T_0}}).$
\end{remark}

 \begin{proposition}\label{Nprop11}
 For the space $J_{T_0}$ the following hold.
 \begin{enumerate}
 \item[(i)] The sequence $(t_n)_n$ is a normalized bimonotone
 Schauder   basis of the space  $J_{T_0}$.
 \item[(ii)] For every  $j\in {\mathbb N}$, we have the following estimates:
   \begin{eqnarray*}
   && \|\frac{1}{2p_j}\sum\limits_{k=1}^{p_j}t_{2k-1}\|_{J_{T_0}}=\frac{1}{2}\\
   && \|\frac{1}{2p_j}\sum_{k=1}^{2p_j}(-1)^{k+1} t_{k}\|_{J_{T_0}}
    =\|\frac{1}{2p_j}\sum\limits_{k=1}^{2p_j}e_k\|_{T_0}\le\frac{4}{m_j}.
 \end{eqnarray*}
 In particular the basis $(t_n)_{n\in\N}$ is not unconditional.
 \end{enumerate}
 \end{proposition}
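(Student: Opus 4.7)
The plan is to work throughout with the alternative description $\|x\|_{J_{T_0}}=\sup\|\sum_{k=1}^l(\sum_{n\in I_k}x(n))e_k\|_{T_0}$ from Remark \ref{Nrem5}, with the supremum running over all successive finite intervals $I_1<\cdots<I_l$ of $\N$, combined with the $1$-unconditionality and subsymmetry of the basis $(e_k)$ of $T_0$ (Remark \ref{Nrem2}). For part (i), the normalization $\|t_n\|_{J_{T_0}}=1$ is immediate, since any interval decomposition contributes exactly one nonzero coefficient, of absolute value $1$, coming from the unique interval containing $n$ (if any), and $\|e_1\|_{T_0}=1$. For bimonotonicity it suffices to show that for every interval $J\subset\N$ the basis projection $x\mapsto\sum_{n\in J}x(n)t_n$ is a contraction: given any decomposition $I_1<\cdots<I_l$ computing the $J_{T_0}$-norm of the projection, replace each $I_k$ by $I_k\cap J$ and discard empties; the new intervals form a valid decomposition for $x$, the nonzero coefficients $a_k=\sum_{n\in I_k\cap J}x(n)$ are unchanged, and $1$-unconditionality (to drop zero entries) together with subsymmetry (to reindex positions) leaves the $T_0$-norm unchanged. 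This establishes the bimonotone Schauder basis property.

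For the first estimate in (ii), set $x=\frac{1}{2p_j}\sum_{k=1}^{p_j}t_{2k-1}$. Taking $I_1=[1,2p_j-1]$ gives $\sum_{n\in I_1}x(n)=\frac{1}{2}$, hence the lower bound $\frac{1}{2}$. Conversely, any decomposition produces coefficients $a_k=\frac{N_k}{2p_j}\ge 0$, where $N_k$ counts the odd indices in $I_k\cap[1,2p_j-1]$, so $\sum_k|a_k|\le\frac{1}{2}$; combined with the inequality $\|\cdot\|_{T_0}\le\|\cdot\|_{\ell_1}$ (a trivial induction on trees, using $\|f\|_\infty\le 1$ for every $f\in K_0$), this yields the matching upper bound.

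For the second estimate, set $y=\frac{1}{2p_j}\sum_{k=1}^{2p_j}(-1)^{k+1}t_k$. The singleton decomposition $I_k=\{k\}$, $k=1,\ldots,2p_j$, yields, by $1$-unconditionality, the lower bound $\frac{1}{2p_j}\|\sum_{k=1}^{2p_j}e_k\|_{T_0}$. For the matching upper bound, observe that for any interval $I$ the alternating sum $\sum_{n\in I\cap[1,2p_j]}(-1)^{n+1}$ telescopes to a value in $\{-1,0,1\}$, so every coefficient $a_k$ satisfies $|a_k|\le\frac{1}{2p_j}$, with at most $2p_j$ indices $k$ giving nonzero $a_k$. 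By $1$-unconditionality and subsymmetry, we may collapse the support of the nonzero $a_k$'s to an initial segment and bound it by the full segment $\{1,\ldots,2p_j\}$, giving $\|\sum_k a_k e_k\|_{T_0}\le\frac{1}{2p_j}\|\sum_{k=1}^{2p_j}e_k\|_{T_0}$, which produces the stated equality. Finally, splitting $\sum_{k=1}^{2p_j}e_k$ into two consecutive blocks of length $p_j$ and applying Lemma \ref{Nlem2} to each block yields $\|\sum_{k=1}^{2p_j}e_k\|_{T_0}\le\frac{8p_j}{m_j}$, whence the bound $\frac{4}{m_j}$. The failure of unconditionality is then immediate: the two computed vectors differ only by sign flips and the insertion of zero coefficients, yet their norms differ by a factor of order $m_j$.

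No step presents a significant obstacle. The one point requiring some care is the upper bound in the second estimate, where one must verify that no clever interval decomposition can beat the trivial singleton one; this is precisely where the telescoping of the alternating sum inside each interval, combined with the subsymmetry of $(e_k)$, is essential.
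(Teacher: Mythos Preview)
Your proof is correct and follows essentially the same approach as the paper's. The paper dismisses part (i) as ``standard'' and the upper bound $\|x\|_{J_{T_0}}\le\frac{1}{2}$ in the first estimate as ``obvious'', whereas you spell these out explicitly via the $\ell_1$ bound and the interval-restriction argument; for the second estimate both you and the paper use the singleton decomposition for the lower bound, the telescoping observation $\sum_{n\in I}(-1)^{n+1}\in\{-1,0,1\}$ combined with $1$-unconditionality and subsymmetry for the upper bound, and Lemma~\ref{Nlem2} (with your two-block splitting making the factor of $2$ explicit) for the final inequality.
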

  \begin{proof}[\bf Proof.] The proof that $(t_n)_{n\in\N}$ is a normalized
  bimonotone Schauder
  basis is standard. We set
  $x=\frac{1}{2p_j}\sum\limits_{k=1}^{p_j}t_{2k-1}$. The inequality
  $\|x\|_{J_{T_0}}\le \frac{1}{2}$ is obvious while from the
  action of the functional $f=\chi_I\in D_0$, where
  $I=\{1,2,\ldots,2p_j-1\}$, on the vector $x$ we obtain that $\|x\|_{J_{T_0}}\ge
  \frac{1}{2}$.

  Setting $l=2p_j$ and $I_k=\{k\}$ for $1\le k\le l$ we get
  $\|\sum\limits_{k=1}^{2p_j}(-1)^{k+1}t_k\|_{J_{T_0}}\ge
  \|\sum\limits_{k=1}^{2p_j}(-1)^{k+1}e_k\|_{T_0}=
  \|\sum\limits_{k=1}^{2p_j}e_k\|_{T_0}$
  where  the inequality follows from Remark \ref{Nrem5}, while the
  equality is a consequence of  the 1-unconditionality of the basis
  $(e_k)_{k\in\N}$ of $T_0$ (Remarks \ref{Nrem1} and \ref{Nrem2}).

 Let's explain now the inequality
 $\|\sum\limits_{k=1}^{2p_j}(-1)^{k+1}t_k\|_{J_{T_0}}\le
 \|\sum\limits_{k=1}^{2p_j}e_k\|_{T_0}$. We observe that for
 every interval $I$ of $\N$ the quantity $\sum\limits_{k\in
 I}(-1)^{k+1}$ is either equal to $-1$ or to 0 or to $1$.
 Thus the inequality follows from Remarks \ref{Nrem5} and \ref{Nrem1}.

 Finally the inequality
 $\|\frac{1}{2p_j}\sum\limits_{k=1}^{2p_j}e_k\|_{T_0}\le\frac{4}{m_j}$
 follows from Lemma \ref{Nlem2}.
 \end{proof}

 \begin{theorem}\label{Nth2} There exists a positive constant $c$ such that
 the  basis $(t_n)_{n\in\N}$ of $J_{T_0}$ is $c$ - finitely
 representable in every block subspace of $\eqs_d$. This means that,
 for every block subspace $Z$ of $\eqs_d$ and every
 $N\in\N$, there exists a finite block sequence
 $(z_k)_{k=1}^N$ in $Z$ such that, for every
 choice of scalars $(\mu_k)_{k=1}^N$, we have that
 \[\|\sum\limits_{k=1}^N  \mu_kt_k\|_{J_{T_0}}\le
 \|\sum\limits_{k=1}^N  \mu_kz_k\|_{\eqs_d}\le c\cdot
 \|\sum\limits_{k=1}^N  \mu_kt_k\|_{J_{T_0}}\; .\]
 \end{theorem}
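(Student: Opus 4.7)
The plan is to construct $(z_k)_{k=1}^N$ as a sufficiently lacunary sequence of exact-pair vectors in $Z$ and then prove both inequalities by inductive tree arguments that translate $K_d$-functionals into $D_0$-functionals, and conversely $K_0$-functionals into $K_d$-functionals. Using Proposition \ref{Nprop8} repeatedly, I would inductively select a fast-growing sequence $(j_k)_{k=1}^N$ of positive integers and successive $(6, 2j_k)$-exact pairs $(z_k, z_k^*)$ with $z_k \in Z$ and $z_k^* \in K_d$ of weight $m_{2j_k}$. The growth of $(j_k)$ is chosen so that $(z_k)$ is a rapidly increasing sequence in the sense of Definition \ref{Ndef8}, and additionally so that each $n_{2j_k}$ is large enough to accommodate the admissibility counts produced in the tree arguments below.

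For the upper bound $\|\sum \mu_k z_k\|_{\eqs_d} \le c\,\|\sum \mu_k t_k\|_{J_{T_0}}$, I would prove a signed variant of the basic inequality (Proposition \ref{Nprop5}) tailored to $J_{T_0}$. The idea is that, for every $f \in K_d$, one builds by induction on the tree of $f$ a parallel tree of a functional $\phi \in D_0$: each leaf $f = \pm e_r^*$ with $r \in \ran z_{k_0}$ is matched by $\pm \chi_{\{k_0\}} \in G$, and each internal node $f = \frac{1}{w(f)}(f_1+\cdots+f_d)$ with $w(f)=m_j$ is matched by the same $(\mc{A}_{n_{j}}, \frac{1}{m_{j}})$-operation applied to the sub-functionals $\phi_1,\dots,\phi_d$. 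The crucial new ingredient, compared with Proposition \ref{Nprop5}, is that closure of $K_d$ under restriction to intervals of $\N$ corresponds exactly to the $\chi_I$-generators of $D_0$: indices $k$ for which $\ran(f_i)$ meets $\ran(z_k)$ cluster into intervals of $\{1,\dots,N\}$, which become the intervals inside the $\chi_I$'s appearing in $\phi$. The exact-pair property bounds the boundary-overlap contributions by $C/w(f_i)$ (when $w(f_i)<w(z_k^*)$) or by $C/m_{2j_k}^2$ (when $w(f_i)>w(z_k^*)$), and these errors are absorbed by the R.I.S. term $\varepsilon \sum |\mu_k|$, which is made negligible by the fast growth of $(j_k)$.

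For the lower bound $\|\sum \mu_k t_k\|_{J_{T_0}} \le \|\sum \mu_k z_k\|_{\eqs_d}$, the alternative description of $\|\cdot\|_{J_{T_0}}$ in Remark \ref{Nrem5} reduces the task to showing, for every choice of intervals $I_1<\cdots<I_l$ of $\{1,\dots,N\}$ and every $\psi \in K_0$, the inequality
\[
\psi\Bigl(\sum_{k=1}^l\Bigl(\sum_{n\in I_k}\mu_n\Bigr) e_k\Bigr) \le \Bigl\|\sum_{n=1}^N \mu_n z_n\Bigr\|_{\eqs_d}.
\]
I would lift $\psi$ to a functional $\Phi \in K_d$ with $\Phi(\sum_n \mu_n z_n) = \psi(\sum_k y_k e_k)$, where $y_k=\sum_{n\in I_k}\mu_n$, by induction on the tree of $\psi$: each leaf $\pm e_{k_i}^*$ is replaced by the $K_d$-functional obtained by combining the exact-pair functionals $\{z_n^* : n\in I_{k_i}\}$ (legally, as an $(\mc{A}_{n_j}, \frac{1}{m_j})$-combination whose weight and admissibility match those of the parent node), while each internal node is preserved by applying the same operation.

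The main obstacle will be this lower bound: one must reconcile, at every internal node of $\psi$'s tree, the operation weight $m_j$ demanded by $\psi$ with the weights $m_{2j_n}$ carried by the $z_n^*$'s, and check that the resulting $\Phi$, with the trees of the $z_n^*$'s grafted onto its leaves, remains a legitimate element of $K_d$. Meeting these admissibility and weight-matching constraints uniformly for all $\psi$ is what forces the strong lacunarity imposed on $(j_k)$ and is what produces the comparison constant $c$.
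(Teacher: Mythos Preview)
Your proposal has a genuine gap, and it lies exactly where you yourself flag the ``main obstacle'': the lower bound. With exact pairs $(z_k,z_k^*)$ of \emph{different} weights $m_{2j_k}$, there is no way to realise the generators $\chi_I\in D_0$ as functionals in $K_d$. Concretely, for the single functional $g=\chi_{[1,N]}\in D_0$ you would need some $f\in K_d$ with $f(\sum_k\mu_k z_k)\ge c^{-1}\big|\sum_k\mu_k\big|$. The only candidates built from the $z_k^*$'s are of the form $\frac{1}{m_p}\sum_{k\in I}z_k^*$ for some $p$ with $|I|\le n_p$, and these lose a factor $\frac{1}{m_p}$; no lacunarity of $(j_k)$ removes that loss, and no tree-grafting trick can repair it, since the leaf $\chi_I$ in a $D_0$-tree has \emph{no} weight to match against. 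Your remark that one should ``match the weight of the parent node'' does not cover the case where $\psi$ itself is a leaf, and even at interior nodes it forces an admissibility blow-up that is not uniform in $\psi$.

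The paper's construction avoids this by \emph{not} using exact pairs of different weights. Instead one fixes a single $(6,2i+1)$ dependent sequence $(x_r,\phi_r)_{r=1}^{n_{2i+1}}$ in $Z$, so that $\Phi=\frac{1}{m_{2i}}\sum_r\phi_r$ is a $2i+1$ special functional in $K_d$, and then sets $y_k=\frac{m_{2i}}{M}\sum_{r=(k-1)M+1}^{kM}x_r$ and $y_k^*=\frac{1}{m_{2i}}\sum_{r=(k-1)M+1}^{kM}\phi_r$ for an appropriate $M$. The crucial point is that every partial sum $\sum_{k\in I}y_k^*$ is the restriction of $\Phi$ to an interval and hence already lies in $K_d$; this is precisely what lifts $\chi_I$ without any loss and gives the left inequality with constant $1$. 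The same dependent-sequence structure is what drives the (much longer) upper-bound argument: one must analyse, for an arbitrary $f\in K_d$, which nodes of its tree are $2i+1$ special functionals overlapping with $\Phi$, and handle separately the part of $\{1,\ldots,2p_j\}$ covered by such nodes (Claim~1) and the remaining part via a careful splitting $y_k=y_k'+y_k''$ (Claim~2). A plain ``signed basic inequality'' applied to an RIS of exact pairs would land you in the auxiliary norming set $W'$, not in $D_0$, and the extra $(\mc{A}_{4n_{2j+1}},\frac{1}{m_{2j}})$ operations in $W'$ are not controlled by the $J_{T_0}$ norm.
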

 We shall give the  proof of Theorem \ref{Nth2} in the next
 section.
 Let us note that, since the basis $(t_n)_{n\in\N}$ of
 $J_{T_0}$ is not unconditional, Theorem \ref{Nth2} implies in
 particular that the space $\eqs_d$ does not contain any unconditional basic
 sequence. Of course,  in Theorem \ref{Nth1}, we  have already
 proved the stronger result that the space $\eqs_d$ is
 Hereditarily Indecomposable.

 From Theorem \ref{Nth2} and Proposition \ref{Nprop11} we
 immediately get the following.

 \begin{corollary}\label{Ncor1}
 Let $Z$ be any block subspace of $\eqs_d$ and let $j\in\N$.
 Then there exists a finite block sequence $(y_k)_{k=1}^{2p_j}$ in
 $Z$ such that
 \[
 \|\frac{1}{2p_j}\sum\limits_{k=1}^{p_j}y_{2k-1}\|\ge\frac{1}{2}
 \qquad\text{ and }\qquad
 \|\frac{1}{2p_j}\sum\limits_{k=1}^{2p_j}(-1)^{k+1}y_{k}\|\le
 \frac{4c}{m_j}\]
 \end{corollary}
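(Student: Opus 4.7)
The plan is to obtain the corollary as an immediate consequence of Theorem \ref{Nth2} together with the estimates for the basis $(t_n)_{n\in\N}$ of $J_{T_0}$ that were established in Proposition \ref{Nprop11}. Since Theorem \ref{Nth2} transfers norm estimates for finite linear combinations of $(t_n)_{n\in\N}$ to block sequences in any block subspace $Z$ of $\eqs_d$, up to the universal constant $c$, the two inequalities in the statement are simply the translations of the two inequalities of Proposition \ref{Nprop11}(ii).

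More precisely, I would first fix $j \in \N$ and apply Theorem \ref{Nth2} with $N=2p_j$ to produce a block sequence $(y_k)_{k=1}^{2p_j}$ in $Z$ such that
\[
\|\sum_{k=1}^{2p_j}\mu_k t_k\|_{J_{T_0}}\le \|\sum_{k=1}^{2p_j}\mu_k y_k\|_{\eqs_d}\le c\cdot\|\sum_{k=1}^{2p_j}\mu_k t_k\|_{J_{T_0}}
\]
for every choice of scalars $(\mu_k)_{k=1}^{2p_j}$.

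For the first inequality, I would take $\mu_{2k-1}=\frac{1}{2p_j}$ for $1\le k\le p_j$ and $\mu_{2k}=0$, and use the lower bound together with Proposition \ref{Nprop11}(ii) which gives $\|\frac{1}{2p_j}\sum_{k=1}^{p_j} t_{2k-1}\|_{J_{T_0}}=\frac{1}{2}$. For the second inequality, I would take $\mu_k=\frac{(-1)^{k+1}}{2p_j}$ for $1\le k\le 2p_j$, and use the upper bound together with the estimate $\|\frac{1}{2p_j}\sum_{k=1}^{2p_j}(-1)^{k+1}t_k\|_{J_{T_0}}\le \frac{4}{m_j}$ from Proposition \ref{Nprop11}(ii). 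Both computations are immediate.

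There is essentially no obstacle in this step: the entire technical content has been absorbed into Theorem \ref{Nth2} (whose proof is deferred to Section \ref{Nsec5}) and into the explicit estimates of Proposition \ref{Nprop11}, which in turn rely on Lemma \ref{Nlem2}. The corollary serves to package those two ingredients into a form directly usable for constructing the diagonal operators $D_j$ in the subsequent sections, by exhibiting a pair of vectors $\frac{1}{2p_j}\sum_{k=1}^{p_j}y_{2k-1}$ and $\frac{1}{2p_j}\sum_{k=1}^{2p_j}(-1)^{k+1}y_k$ whose norms differ by a factor of order $m_j$; this disparity is precisely what drives the non-compactness of the operators $D_{j_k}$ on the subspace $Z$.
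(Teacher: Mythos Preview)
Your proposal is correct and follows exactly the approach indicated in the paper, which simply states that the corollary follows immediately from Theorem \ref{Nth2} and Proposition \ref{Nprop11}. Your write-up makes explicit the two choices of scalars and the corresponding use of the lower and upper bounds from Theorem \ref{Nth2}, which is precisely what is intended.
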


 \begin{theorem}\label{Nth3}
 There exist bounded strictly singular non-compact diagonal operators on
 the space $\eqs_d$. Especially, given any infinite dimensional subspace $Z$ of
 $\eqs_d$ there exists a bounded strictly singular diagonal operator on
 $\eqs_d$ such that its restriction  on $Z$ is a non-compact operator.

 Moreover the space $\mc{L}_{\diag}(\eqs_d)$ of all bounded diagonal operators
 on the space $\eqs_d$ contains an isomorphic copy of
 $\ell_{\infty}(\N)$.
 \end{theorem}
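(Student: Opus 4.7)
The plan is to combine Corollary \ref{Ncor1} with Proposition \ref{Nprop4} in a single inductive construction, choosing the block sequences in $Z$ simultaneously with the indices $j_k$ so that both (a) the growth conditions (i)--(iii) of Proposition \ref{Nprop4} are satisfied, and (b) the block sequences $(y^{j_k}_i)_{i=1}^{2p_{j_k}}$ obtained from Corollary \ref{Ncor1} are successive across $k$. Suppose we have built $j_1<\cdots<j_{k-1}$ and $(y^{j_l}_i)_{i=1}^{2p_{j_l}}$ for $l<k$. Let $Z_k$ be the block subspace of $Z$ supported after $\max\supp y^{j_{k-1}}_{2p_{j_{k-1}}}$. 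We pick $j_k$ large enough to satisfy $m_{j_k}\ge 2^k n_{j_{k-1}+1}$, $m_{j_k}\ge 2^{k-1}\max I^{j_{k-1}}_{p_{j_{k-1}}}$ and $j_k>n_{2k}$, then apply Corollary \ref{Ncor1} inside $Z_k$ to produce $(y^{j_k}_i)_{i=1}^{2p_{j_k}}$. We set $I^{j_k}_i:=\ran(y^{j_k}_{2i-1})$ and define the diagonal operators $D_{j_k}$ as in section \ref{Nsec3}.

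By Proposition \ref{Nprop4}, for every $(\lambda_k)\in\ell_\infty(\N)$ the operator $D_\lambda:=\sum_k\lambda_k D_{j_k}$ is bounded with $\|D_\lambda\|\le C_0\sup_k|\lambda_k|$, and is strictly singular because $\eqs_d$ is HI and $\lim_n D_\lambda e_n=0$. To prove non-compactness and the lower $\ell_\infty$ estimate, I set
\[
x_{j_k}=\frac{m_{j_k}}{2p_{j_k}}\sum_{i=1}^{2p_{j_k}}(-1)^{i+1}y^{j_k}_i,
\]
so that Corollary \ref{Ncor1} gives $\|x_{j_k}\|\le 4c$. The key cancellation is that $D_{j_k}$ picks out precisely the odd-indexed summands of $x_{j_k}$: since $I^{j_k}_i=\ran(y^{j_k}_{2i-1})$ and the $y^{j_k}_{2i}$'s lie in the gaps, we obtain
\[
D_{j_k}x_{j_k}=\frac{1}{2p_{j_k}}\sum_{i=1}^{p_{j_k}}y^{j_k}_{2i-1},
\]
which has norm at least $1/2$ by Corollary \ref{Ncor1}. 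Because the block sequences have been arranged successively across $k$, every range $I^{j_l}_i$ with $l\ne k$ is disjoint from $\ran(x_{j_k})$, hence $D_{j_l}x_{j_k}=0$ for $l\ne k$ and consequently $D_\lambda x_{j_k}=\lambda_k D_{j_k}x_{j_k}$.

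Taking $\lambda_k\equiv 1$, the bounded sequence $(x_{j_k})$ satisfies $\|Dx_{j_k}\|\ge 1/2$; since $(x_{j_k})$ is a block sequence in a reflexive space it is weakly null, so if $D$ were compact we would have $\|Dx_{j_k}\|\to 0$, a contradiction. Since moreover $x_{j_k}\in Z$, the restriction $D|_Z$ is itself non-compact, which gives parts (i) and (ii) of the theorem. For the $\ell_\infty$-embedding, given $(\lambda_k)\in\ell_\infty(\N)$ choose $k_0$ with $|\lambda_{k_0}|\ge\tfrac12\sup_k|\lambda_k|$; then
\[
\|D_\lambda\|\ge\frac{\|D_\lambda x_{j_{k_0}}\|}{\|x_{j_{k_0}}\|}
=\frac{|\lambda_{k_0}|\,\|D_{j_{k_0}}x_{j_{k_0}}\|}{\|x_{j_{k_0}}\|}
\ge\frac{|\lambda_{k_0}|}{8c}\ge\frac{1}{16c}\sup_k|\lambda_k|,
\]
so the map $(\lambda_k)\mapsto D_\lambda$ is the desired isomorphic embedding of $\ell_\infty(\N)$ into $\mc{L}_{\diag}(\eqs_d)$. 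The only delicate point in the plan is the simultaneous inductive choice in the first paragraph: Proposition \ref{Nprop4} constrains $j_k$ from below in terms of $\max I^{j_{k-1}}_{p_{j_{k-1}}}$, so one must pick $j_k$ after the previous block sequence has been fixed, which is why the selection of $j_k$ and the application of Corollary \ref{Ncor1} have to be interleaved rather than done independently.
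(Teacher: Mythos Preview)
Your proof is correct and follows essentially the same approach as the paper's: define $I^{j_k}_i=\ran(y^{j_k}_{2i-1})$, set $x_{j_k}=\frac{m_{j_k}}{2p_{j_k}}\sum_i(-1)^{i+1}y^{j_k}_i$, and use Corollary \ref{Ncor1} together with Proposition \ref{Nprop4}. The only organizational difference is that the paper builds the block sequences $(y^j_i)_{i=1}^{2p_j}$ for \emph{all} $j$ first and afterwards selects a sparse subset $M=\{j_k\}$ satisfying (i)--(iii), whereas you interleave the choice of $j_k$ with the construction of $(y^{j_k}_i)$; both are valid, and your remark about why the interleaving is needed is exactly the point. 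Two cosmetic comments: you omit the routine reduction (via perturbation) to the case where $Z$ is a block subspace, and your lower $\ell_\infty$ bound $\tfrac{1}{16c}$ can be sharpened to $\tfrac{1}{8c}$ by noting that $\|D_\lambda\|\ge\frac{|\lambda_m|}{8c}$ holds for every $m$, hence also for the supremum.
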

 \begin{proof}[\bf Proof.]
 By standard perturbation arguments and passing to a subspace we
 may assume that $Z$ is a block subspace of $\eqs_d$.
 We inductively construct vectors
 $(y^j_k)_{k=1}^{2p_j}$, $j=1,2,\ldots$ in $Z$, satisfying the
 conclusion of Corollary \ref{Ncor1} and moreover
 $y^j_{2p_j}<y^{j+1}_1$ for each $j$.

 For $j=1,2\ldots$ and $1\le i\le p_j$ we set
 $I^j_i=\ran(y^j_{2i-1})$ and we define the diagonal operator
 $D_j:\eqs_d\to\eqs_d$ by the rule
 $D_j(x)=\frac{1}{m_j}\sum\limits_{i=1}^{p_j}I^j_{i}x$. We also
 consider for $j=1,2,\ldots$ the vector
 $x_j=\frac{m_j}{2p_j}\sum\limits_{k=1}^{2p_j}(-1)^{k+1}y_k^j$ which belongs to $Z$.
 Then $\|x_j\|\le 4c$, $\|D_jx_j\|\ge \frac{1}{2}$ while
 $D_lx_j=0$ for $l\neq j$.

 Let now $M=\{j_k:\;k\in\N\}$ be any subset of $\N$ satisfying
 conditions (i), (ii), (iii) in the statement of Proposition \ref{Nprop4}.
 Then, from Proposition \ref{Nprop4}, the diagonal operator
 $D=\sum\limits_{k=1}^{\infty}D_{j_k}$ is bounded and strictly
 singular. The restriction of $D$ on $Z$ is non-compact, since the
 block sequence $(x_{j_k})_{k\in\N}$ is bounded, while the
 sequence
 $(Dx_{j_k})_{k\in\N}$ does not have any convergent subsequence.

 For $M=\{j_k:\;k\in\N\}$ as above,  Proposition \ref{Nprop4}
 yields that for every
 $(\lambda_k)_{k\in\N}\in\ell_{\infty}(\N)$, the diagonal operator
 $\sum\limits_{k=1}^{\infty}\lambda_kD_{j_k}$ is bounded with
  $\|\sum\limits_{k=1}^{\infty}\lambda_kD_{j_k}\|\le
 C_0\cdot\sup\limits_k|\lambda_k|$. On the other hand the action of
 the operator $\sum\limits_{k=1}^{\infty}\lambda_kD_{j_k}$ to the
 vector $x_{j_m}$ yields that
 $\|\sum\limits_{k=1}^{\infty}\lambda_kD_{j_k}\|\ge
 \frac{|\lambda_m|\cdot\|D_{j_m}(x_{j_m})\|}{\|x_{j_m}\|}\ge
 \frac{1}{8c}\cdot |\lambda_m|$ for each $m$.
 Hence
 \[  \frac{1}{8c}\cdot \sup\limits_k|\lambda_k|\le
    \|\sum\limits_{k=1}^{\infty}\lambda_kD_{j_k}\|\le
 C_0\cdot\sup\limits_k|\lambda_k|. \]
 The proof of the theorem is complete.
 \end{proof}

 \section{The finite block representability of $J_{T_0}$ in $\eqs_d$}\label{Nsec5}

  The content of this section is  the proof of  Theorem \ref{Nth2}.
 Let $N\in\N$ and let $Z$ be any block subspace
  of $\eqs_d$. We first choose  $j\ge 2$ with $2p_j\ge N$ and $i>j$ such
  that $m_{2i-1}>38p_j$. Then
   we select $(x_r,\phi_r)_{r=1}^{n_{2i+1}}$ a $(6,2i+1)$
  dependent sequence with $x_r\in Z$ and $\min\supp x_1>m_{2i+1}$ (this is
  done with an inductive application of Proposition \ref{Nprop8}).
  The fact that $(\phi_r)_{r=1}^{n_{2i+1}}$ is a special sequence
  yields that the functional
 \[ \Phi=\frac{1}{m_{2i}}(\phi_1+\phi_2+\cdots+\phi_{n_{2i+1}}) .\]
 is a $2i+1$ special functional and thus belong to the norming set
 $K_d$.

   We set
  $M=\frac{n_{2i+1}}{2p_j}$ and observe that $M\ge (4n_{2i})^2$.
 For $1\le k\le 2p_j$ we set
 \[ y_k=\frac{m_{2i}}{M} \sum\limits_{r=(k-1)M+1}^{kM}x_r.\]
 We also consider the functionals
 \[y_k^*=\frac{1}{m_{2i}}\sum\limits_{r=(k-1)M+1}^{kM}\phi_r\]
 for $1\le k\le 2p_j$, and we notice that $y_k^*\in K_d$
 (since each $y_k^*$ is the restriction of $\Phi$ on some interval) with
 $\ran y_k=\ran y_k^*$ and $\|y_k\|\ge y_k^*(y_k)=1$.
 Observe also, that for every subinterval $I$ of
 $\{1,2,\ldots,2p_j\}$, the functionals $\pm\sum\limits_{k\in I}y_k^*$
 also belong to $K_d$.

  Our aim is prove that for every choice of scalars
 $(\mu_k)_{k=1}^{2p_j}$ we have that
   \begin{equation}\label{Neq1}
   \|\sum\limits_{k=1}^{2p_j}\mu_kt_k\|_{J_{T_0}}\le
   \|\sum\limits_{k=1}^{2p_j}\mu_ky_k\|\le
   150\cdot\|\sum\limits_{k=1}^{2p_j}\mu_kt_k\|_{J_{T_0}}.
   \end{equation}
 This will finish the proof of Theorem \ref{Nth2} for
 $c=150$.
 We begin with the  proof of the left side inequality of \eqref{Neq1}
 which is the easy one.
 \begin{proof}[\bf Proof of the left side inequality of
 \eqref{Neq1}]
 It is enough to prove that for every choice of scalars
 $(\mu_k)_{k=1}^{2p_j}$ and every $g\in D_0$ (recall that $D_0$ is
 the norming set of the space $J_{T_0}$; see Definition \ref{Ndef12})
 there exists $f\in K_d$ such that
 $g\big(\sum\limits_{k=1}^{2p_j}\mu_kt_k\big)=f\big(\sum\limits_{k=1}^{2p_j}\mu_ky_k\big)$.

 Let $g\in D_0$. We may assume that $\supp g\subset \{1,2,\ldots
 2p_j\}$. Let $(g_a)_{a\in\mc{A}}$ be a tree of the functional
 $g$. We shall build functionals $(f_a)_{a\in\mc{A}}$ in $K_d$
 such that
 $g_a\big(\sum\limits_{k=1}^{2p_j}\mu_kt_k\big)
 =f_a\big(\sum\limits_{k=1}^{2p_j}\mu_ky_k\big)$
 for each $a\in\mc{A}$. Then the functional $f=f_0$ (where
 $0\in\mc{A}$ is the root of the tree $\mc{A}$) satisfies the
 desired property.

 For $a\in \mc{A}$ which is maximal the functional $g_a$ is of the
 form $g_a=\e \chi_I$ where $\e\in\{-1,1\}$ and
 $I$ is a subinterval of $\{1,2,\ldots 2p_j\}$. We set
 $f_a=\e \sum\limits_{k\in I}y_k^*$ and the desired equality holds
 since $y_k^*(y_k)=1$ for each $k$.
  Let now $a\in\mc{A}$ be non maximal and suppose that the functionals
 $(f_\beta)_{\beta\in S_a}$ have been defined. The functional
 $g_a$ has an expression $g_a=\frac{1}{m_q}\sum\limits_{\beta\in
 S_a}g_\beta$ with $\#S_a\le n_q$, for some $q\in \N$. We set
 $f_a=\frac{1}{m_q}\sum\limits_{\beta\in S_a}f_\beta$. Then $f_a\in K_d$
 while  the required equality is obvious.
  The inductive construction is complete.
 \end{proof}

 Before passing to the proof of the right side inequality of
 \eqref{Neq1} we need some preliminary lemmas.

  \begin{lemma}\label{Nlem9}
  Consider the vector $x=\frac{1}{M}\sum\limits_{l=1}^Me_l$ in the
  auxiliary space $T'$ (recall the the auxiliary space $T'$
  and its norming set $W'$ have been defined in Definition \ref{Ndef7}).
  Then
  \begin{enumerate}
  \item[(i)] If either $f\in W'$ with $w(f)\ge m_{2i+1}$ or
   $f$ is the result of an $(\mc{A}_{4n_{2i+1}},\frac{1}{m_{2i}})$
  operation then \[|f(x)|\le \frac{1}{w(f)}.\]
  \item[(ii)] If either $f\in W'$ with $w(f)<m_{2i}$ or  $f$ is the
  result of an $(\mc{A}_{4n_{2i}},\frac{1}{m_{2i}})$ operation then
  \[|f(x)|\le \frac{2}{w(f)\cdot m_{2i}}.\]
  \end{enumerate}
  \end{lemma}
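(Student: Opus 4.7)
The plan is to decompose $f$ as its outermost weighted sum $f = \frac{1}{w(f)}\sum_{r=1}^d f_r$ with $f_1 < f_2 < \cdots < f_d$ successive elements of $W'$, and to exploit that $x$ has constant coordinates $1/M$ on $[1,M]$. For part~(i), only a trivial estimate is needed: since the supports $\supp f_r$ are pairwise disjoint and $\|f_r\|_\infty \le 1$ for every $r$, one has
\[
|f(x)| \;\le\; \frac{1}{w(f)}\sum_{r=1}^d |f_r(x)| \;\le\; \frac{1}{w(f)\,M}\sum_{r=1}^d \#\bigl(\supp f_r \cap [1,M]\bigr) \;\le\; \frac{1}{w(f)},
\]
and this works uniformly whether $w(f)\ge m_{2i+1}$ in $W'$ or $f$ arises from an $(\mc{A}_{4n_{2i+1}},\frac{1}{m_{2i}})$ operation (so $w(f)=m_{2i}$).

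For part~(ii) I would mimic the ``heavy/light'' split used in the proof of Lemma \ref{Nlem5}. Set $D_r = \{l : |f_r(e_l)| > 1/m_{2i}\}$ and $D=\bigcup_{r=1}^d D_r$. The counting Claim established inside the proof of Lemma \ref{Nlem5} (applied with the index $i$ in place of $j$) gives $\# D_r \le (4n_{2i-1})^4$ for every $r$, since each $f_r\in W'$. Using $\|f_r\|_\infty\le 1$ on $D$, the estimate $|f_r(e_l)|\le 1/m_{2i}$ off $D$, and the disjointness of the supports, one obtains
\[
|f(x)| \;\le\; \frac{1}{w(f)}\Bigl(\frac{\# D}{M} + \frac{1}{m_{2i}}\Bigr).
\]
It then suffices to verify $\# D / M \le 1/m_{2i}$ in each subcase. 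When $w(f)<m_{2i}$ one checks that in $W'$ the outermost operation has at most $4n_{2i-1}$ summands, hence $d\le 4n_{2i-1}$ and $\# D \le (4n_{2i-1})^5$; when $f$ comes from $(\mc{A}_{4n_{2i}},\frac{1}{m_{2i}})$ one has $d\le 4n_{2i}$, so $\# D \le 4n_{2i}\,(4n_{2i-1})^4$.

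The only real work is the arithmetic checking $\# D / M \le 1/m_{2i}$, and this is the main (but routine) obstacle. Both bounds follow from the calibration $M \ge (4n_{2i})^2$ established just before the statement together with the growth condition $n_{2i}\ge (4n_{2i-1})^5 m_{2i}$; the latter gives $(4n_{2i-1})^5 \le n_{2i}/m_{2i}$, from which one quickly sees that both $(4n_{2i-1})^5/M$ and $4n_{2i}(4n_{2i-1})^4/M$ are dominated by $1/m_{2i}$. This produces the desired bound $|f(x)|\le 2/(w(f)\,m_{2i})$ and completes the lemma.
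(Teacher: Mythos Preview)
Your proof is correct and follows essentially the same approach as the paper: part~(i) is dismissed as trivial, and part~(ii) uses the heavy/light decomposition via the sets $D_r=\{l:|f_r(e_l)|>1/m_{2i}\}$, invokes the counting Claim from the proof of Lemma~\ref{Nlem5} to bound $\#D_r\le(4n_{2i-1})^4$, and then checks the arithmetic using $M\ge(4n_{2i})^2$ together with the growth condition $n_{2i}\ge(4n_{2i-1})^5m_{2i}$. The only cosmetic difference is that the paper uses the uniform bound $d\le4n_{2i}$ in both subcases of~(ii), whereas you record the slightly sharper $d\le4n_{2i-1}$ when $w(f)<m_{2i}$; either choice suffices.
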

  \begin{proof}[\bf Proof.]
  Part (i) is obvious. In order to prove part (ii) consider $f\in W'$ such that
  either $w(f)<m_{2i}$ or $f$ is the result of an $(\mc{A}_{4n_{2i}},\frac{1}{m_{2i}})$
  operation. In either case the functional $f$ takes the form
  $f=\frac{1}{w(f)}\sum\limits_{k=1}^df_k$  with $f_1<f_2<\cdots<f_d$ in $W'$ and $d\le 4n_{2i}$.
  We set $D_k=\{l:\; |f_k(e_l)|>\frac{1}{m_{2i}}\}$ for $k=1,2,\ldots ,d$ and
  $D=\bigcup\limits_{k=1}^dD_k$. From the claim in the proof of Lemma \ref{Nlem5}
  we get that $\#(D_k)\le (4n_{2i-1})^4$ for each $k$, thus
  $\#(D)\le 4n_{2i}\cdot(4n_{2i-1})^4$.

   Taking into account that $M\ge (4n_{2i})^2\ge
   4n_{2i} \cdot (4n_{2i-1})^4 \cdot m_{2i}$ we deduce that
   \begin{eqnarray*}
   |f(x)| & \le & |f_{|D}(x)| +|f_{|(\N\setminus D)}(x)|\\
    &\le&       \frac{1}{w(f)}\cdot\frac{1}{M}\cdot \#(D)+\frac{1}{w(f)}\cdot \frac{1}{m_{2i}}\\
    &\le&       \frac{1}{w(f)}\Big(\frac{4n_{2i}\cdot (4n_{2i-1})^4}{M}+ \frac{1}{m_{2i}}\Big)\\
    &\le&  \frac{2}{w(f)\cdot m_{2i}}.
   \end{eqnarray*}

  \end{proof}

  \begin{lemma}\label{Nlem10}
   For $1\le k\le 2p_j$ we have the following.
 \begin{enumerate}
  \item[(i)] If either $f\in K_d$ with $w(f)<m_{2i}$ or  $f$ is the
  result of an $(\mc{A}_{n_{2i}},\frac{1}{m_{2i}})$ operation then
  \[|f(y_k)|\le \frac{54}{w(f)}.\]
  \item[(ii)] If either $f\in K_d$ with $w(f)\ge m_{2i+1}$ or $f$
  is  a $2i+1$ special functional
  (i.e. $f=Eh$ where $h$ is the result of a
  $(\mc{A}_{n_{2i+1}},\frac{1}{m_{2i}})$ operation on an $n_{2i+1}$
  special sequence)
  then
  \[ |f(y_k)|\le \frac{18 m_{2i}}{w(f)}+\frac{36m_{2i}}{M}\le \frac{19}{m_{2i}}.\]
  \end{enumerate}
  In particular $\|y_k\|\le 36$.
  \end{lemma}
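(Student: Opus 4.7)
The strategy is to bound $|f(y_k)|$ by invoking the basic inequality (Proposition~\ref{Nprop5}) and then applying the auxiliary-space estimates of Lemma~\ref{Nlem9}. By Remark~\ref{Nrem4}, the dependent sequence $(x_r)_{r=1}^{n_{2i+1}}$ is a $(18,1/n_{2i+1}^2)$-R.I.S., and so is every consecutive sub-block, in particular $(x_r)_{r\in I_k}$ with $I_k=\{(k-1)M+1,\ldots,kM\}$. Writing $y_k=\sum_{r\in I_k}\lambda_r x_r$ with constant $\lambda_r=m_{2i}/M$, Proposition~\ref{Nprop5} produces, for any $f\in K_d$, a functional $g\in W'$ with either $g=e^*_{k_0}$ or $w(g)=w(f)$ (and, when $f$ arises from an $(\mc{A}_{n_j},1/m_j)$ operation, $g$ arises from the corresponding $(\mc{A}_{4n_j},1/m_j)$ operation in $W'$), such that
\[|f(y_k)|\;\le\;18\,m_{2i}\,g\!\Big(\tfrac{1}{M}\sum_{r\in I_k}e_r\Big)+\tfrac{18\,m_{2i}}{n_{2i+1}^2}.\]
The trailing error term is dominated by $36m_{2i}/M$ since $M\le n_{2i+1}$.

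To bound the main term, I invoke Lemma~\ref{Nlem9}, noting that its proof depends only on the cardinality of the support of $g$ (via the counting estimate in the claim inside the proof of Lemma~\ref{Nlem5}), and therefore applies equally to $\tfrac{1}{M}\sum_{r\in I_k}e_r$ in place of $\tfrac{1}{M}\sum_{l=1}^{M}e_l$. For part (i), the hypothesis on $f$ places $g$ in case (ii) of Lemma~\ref{Nlem9}, giving $g(\cdot)\le 2/(w(g)m_{2i})$ and hence $|f(y_k)|\le 36/w(f)+36m_{2i}/M\le 54/w(f)$. For part (ii), the same mechanism places $g$ in case (i) of Lemma~\ref{Nlem9}, giving $g(\cdot)\le 1/w(g)$ and hence $|f(y_k)|\le 18m_{2i}/w(f)+36m_{2i}/M$; when moreover $w(f)\ge m_{2i+1}=m_{2i}^2$, the first term is at most $18/m_{2i}$ and the second is at most $1/m_{2i}$ (using $M\ge(4n_{2i})^2\ge 36m_{2i}^2$), yielding the claimed $19/m_{2i}$.

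The main subtle point lies in the $2i+1$ special functional sub-case of (ii), where $w(f)=m_{2i}$: indeed, taking $f=\Phi=\tfrac{1}{m_{2i}}\sum_{r=1}^{n_{2i+1}}\phi_r$ built from the underlying dependent sequence, direct computation with $\phi_r(x_r)=1$ and $\phi_s(x_r)=0$ for $s\neq r$ gives $|\Phi(y_k)|=1$, which is precisely accounted for by the claimed bound $18m_{2i}/w(f)=18$. The basic inequality faithfully captures this resonance by producing a $g\in W'$ arising from an $(\mc{A}_{4n_{2i+1}},1/m_{2i})$-operation with $w(g)=m_{2i}$, at which point Lemma~\ref{Nlem9}(i) supplies exactly the factor $1/m_{2i}$ needed. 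The final ``in particular'' assertion $\|y_k\|\le 36$ then follows by taking the worst-case per-functional estimate over $f\in K_d$: case (i) gives at most $54/m_1=27$, the sub-case $w(f)\ge m_{2i+1}$ of (ii) gives at most $19/m_{2i}$, and the $2i+1$ special sub-case gives at most $18+36m_{2i}/M<19$, all bounded by $36$.
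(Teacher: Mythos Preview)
Your proof is correct and follows exactly the approach sketched in the paper: invoke Remark~\ref{Nrem4} to see that $(x_r)_r$ is an $(18,1/n_{2i+1}^2)$-R.I.S., apply the basic inequality (Proposition~\ref{Nprop5}) with $\lambda_r=m_{2i}/M$, and then read off the bounds from Lemma~\ref{Nlem9}. The paper's proof is only two lines and you have simply unpacked the arithmetic, including the correct handling of the $2i+1$ special functional sub-case where $w(f)=m_{2i}$; your observation that in this case the $g$ produced in Case~2 of the proof of Proposition~\ref{Nprop5} is an $(\mc{A}_{4n_{2i+1}},1/m_{2i})$-combination (hence falls under Lemma~\ref{Nlem9}(i)) is exactly the point, even though the ``Moreover'' clause in the statement of Proposition~\ref{Nprop5} is phrased only for ordinary $(\mc{A}_{n_i},1/m_i)$ operations.
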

 \begin{proof}[\bf Proof.] From Remark \ref{Nrem4} it follows that
 the sequence  $(x_r)_{r\in\N}$ (and thus every subsequence)
 is an $(18,\frac{1}{n_{2i+1}^2})$  R.I.S. The result follows from
 Proposition \ref{Nprop5} and Lemma \ref{Nlem9}.
 \end{proof}

 \begin{proof}[\bf Proof of the left side inequality of
 \eqref{Neq1}]
 Let $f\in K_d$. We fix a tree $(f_a)_{a\in\mc{A}}$ of the
 functional $f$. We set
 \[\mc{B}'=\{a\in\mc{A}:\; f_a\mbox{ is a } 2i+1\mbox{ special
 functional}\}.
   \]
 Let $\beta\in \mc{B}'$. Then the functional $f_\beta$ takes the
 form
 \[ f_\beta=\e_\beta
 \frac{1}{m_{2i}}E(\phi_1+\cdots\phi_{l_0}+\psi_{l_0+1}+\cdots+\psi_{n_{2i+1}})\]
 where $\e_\beta\in\{-1,1\}$, $E$ is an interval of $\N$ and
 $(\phi_1,\ldots,\phi_{l_0},\psi_{l_0+1},\ldots,\psi_{n_{2i+1}})$
 is an $n_{2i+1}$ special sequence with $\psi_{l_0+1}\neq
 \phi_{l_0+1}$. For $\beta$ and $f_\beta$ as above, we set
 \[I_\beta=\big\{k\in\{1,2,\ldots,2p_j\}:\;\supp y_k\subset\ran
 E(\phi_1+\cdots\phi_{l_0})\big\}. \]
 Let
 \[\mc{B}=\{\beta\in \mc{B}':\; I_\beta\neq \emptyset\}. \]
 We notice that
 \begin{enumerate}
 \item[(i)] For every $\beta\in \mc{B}$, the set $I_\beta$ is a
 subinterval of $\{1,2,\ldots,2p_j\}$.
 \item[(ii)] For $\beta_1,\beta_2\in\mc{B}$ with
 $\beta_1\neq\beta_2$ we have that $I_{\beta_1}\cap
 I_{\beta_2}=\emptyset$. In particular $\sum\limits_{\beta\in\mc{B}}\#(I_\beta)\le
 2p_j$.
 \item[(iii)] For every $\beta\in \mc{B}$ we have that
 $f_\beta(\sum\limits_{k\in I_\beta}\mu_ky_k)=\e_\beta\sum\limits_{k\in
 I_\beta}\mu_k$.
 \end{enumerate}
 We set
 \[ F=\bigcup\limits_{\beta\in\mc{B}}I_\beta.\]

 \begin{claim1} We have \quad
 $|f(\sum\limits_{k\in F}\mu_ky_k)|\le
 3\cdot \|\sum\limits_{k=1}^{2p_j}\mu_kt_k\|_{J_{T_0}}.$
 \end{claim1}
 \begin{proof}[\bf Proof of Claim 1.]
 We partition the set $\mc{B}$ into two subsets as follows:
 \begin{eqnarray*}
   \mc{B}_1&=&\{\gamma\in\mc{B}:\; \text{ there exists
 }\beta\in\mc{B}\text{ with }\beta\prec \gamma\} \\
  \mc{B}_2&=& \{\gamma\in\mc{B}:\;\beta\not\in\mc{B} \text{ for every } \beta\prec
 \gamma\}.
 \end{eqnarray*}

 We shall first estimate
 $|f(\sum\limits_{\gamma\in\mc{B}_1}\sum\limits_{k\in
 I_{\gamma}}\mu_ky_k)|$. Let $\gamma\in\mc{B}_1$ and consider
 $\beta\in\mc{B}$ with $\beta\prec\gamma$. The functional
 $f_{\beta}$ is, as we have mentioned before, of the form
 \[ f_\beta=\e_\beta
 \frac{1}{m_{2i}}E(\phi_1+\cdots\phi_{l_0}+\psi_{l_0+1}+\cdots+\psi_{n_{2i+1}})\]
 with $\phi_{l_0+1}\neq \psi_{l_0+1}$.
 Then $\supp f_\gamma\subset \supp \psi_{l}$ for some $l\ge
 l_0+1$. Since $\psi_l$ is not a special functional we obtain that
 $f_\gamma\neq \psi_l$. Thus
 \[|\psi_l(\sum\limits_{k\in I_{\gamma}}\mu_ky_k)|\le
 \frac{1}{w(\psi_l)}|f_\gamma(\sum\limits_{k\in
 I_{\gamma}}\mu_ky_k)|.  \]
 From the definition of special functionals we get that
 $w(\psi_l)>w(\phi_1)>n_{2i+1}^2$. We also have that
 $|f_\gamma(\sum\limits_{k\in I_{\gamma}}\mu_ky_k)|
 =|\sum\limits_{k\in I_{\gamma}}\mu_k|
 \le
 \max\limits_k|\mu_k|\cdot \#(I_\gamma)$. Thus
 \[ |f(\sum\limits_{k\in I_{\gamma}}\mu_ky_k)|\le
 |\psi_l(\sum\limits_{k\in I_{\gamma}}\mu_ky_k)|\le
 \frac{1}{n_{2i+1}^2}\cdot\max\limits_k|\mu_k|\cdot \#(I_\gamma).\]
 We conclude that
 \begin{eqnarray*}
   |f(\sum\limits_{\gamma\in\mc{B}_1}\sum\limits_{k\in
 I_{\gamma}}\mu_ky_k)| &\le &
  \sum\limits_{\gamma\in\mc{B}_1}|f(\sum\limits_{k\in
 I_{\gamma}}\mu_ky_k  )|
      \le  \sum\limits_{\gamma\in\mc{B}_1}\frac{1}{n_{2i+1}^2}\cdot\max\limits_k|\mu_k|\cdot
      \#(I_\gamma)\\
      &\le& \max\limits_k|\mu_k|\cdot
      \frac{2p_j}{n_{2i+1}^2}\le\max\limits_k|\mu_k|
      \le  \|\sum\limits_{k=1}^{2p_j}\mu_k t_k\|_{J_{T_0}}.
 \end{eqnarray*}

 Our next estimate concerns
 $|f(\sum\limits_{\gamma\in\mc{B}_2}\sum\limits_{k\in
 I_{\gamma}}\mu_ky_k)|$. From the definition of $\mc{B}_2$, its
 elements are incomparable nodes of the tree $\mc{A}$. We consider
 the minimal complete subtree $\mc{A}'$ of $\mc{A}$ containing
 $\mc{B}_2$, i.e.
 \[  \mc{A}'=\{a\in\mc{A}:\; \text{ there exists }\beta\in
 \mc{B}_2\text{ with }a\preceq\beta\}. \]
 For every $a\in\mc{A}'$ we set
 \[R_a=\bigcup\limits_{\beta\in\mc{B}_2,\;\beta\succeq a}I_\beta.\]
 As follows from the definition of the sets $I_\beta$, for every
 non maximal $a\in\mc{A}'$, the sets $(R_\beta)_{\beta\in S_a\cap
 \mc{A}'}$ are pairwise disjoint.

 For every $a\in\mc{A}'$ we shall  construct
 functionals $g_a,h_a\in c_{00}(\N)$ such that the following conditions are
 satisfied:
 \begin{enumerate}
 \item[(i)] $\supp g_a\subset R_a$ and $\supp h_a\subset R_a$.
 \item[(ii)] $g_a\in D_0$ (the norming set $D_0$ of the space $J_{T_0}$
 has been defined in Definition \ref{Ndef12})
   and $\|h_a\|_{\infty}\le
 \frac{1}{m_{2i+1}}$.
 \item[(iii)] $g_a(\sum\limits_{k\in R_a}\mu_kt_k)\ge 0$
 and $h_a(\sum\limits_{k\in R_a}\mu_kt_k)\ge 0$.
 \item[(iv)] $|f_a(\sum\limits_{k\in R_a}\mu_ky_k)|\le (g_a+h_a)(\sum\limits_{k\in
 R_a}\mu_kt_k)$.
 \end{enumerate}

 The construction is inductive starting of course with the maximal
 elements of $\mc{A}'$, i.e. with the elements of $\mc{B}_2$.\\
   \smallskip
 \noindent
 {\bf $1\stackrel{st}{=}$ inductive step}\\
 \smallskip
 \noindent    Let $\beta\in \mc{B}_2$. Then $f_\beta$ is a $2i+1$ special
  functional, $R_\beta= I_\beta$ and
  $|f_\beta(\sum\limits_{k\in R_\beta}\mu_ky_k)|=|\sum\limits_{k\in R_\beta}\mu_k|.$
  We set $\e=\sgn(\sum\limits_{k\in R_\beta}\mu_k)$,
  $g_\beta=\e\cdot\chi_{I_{\beta}}$ and $h_a=0$. It is clear that
   our requirements about $g_\beta,h_\beta$ are satisfied.\\
  \smallskip
 \noindent
 {\bf General inductive step}\\
  \smallskip
 \noindent Let $a\in\mc{A}'$, $a\not\in\mc{B}_2$ and assume that
 for every $\gamma\in S_a\cap\mc{A}'$ the functionals $g_\gamma,
 h_\gamma$ have been defined satisfying the inductive assumptions.
 We distinguish three cases.\\
  \smallskip
 \noindent
 {\bf Case 1. } $f_a$ is not a special functional.\\
  \smallskip
 \noindent
 Let $f_a=\frac{1}{m_p}\sum\limits_{\gamma\in S_a}f_{\gamma}$ with
 $\#S_a\le n_p$. We set
 \[ g_a=\frac{1}{m_p}\sum\limits_{\gamma\in
 S_a\cap\mc{A}'}g_{\beta}\qquad\text{ and }\qquad
  h_a=\frac{1}{m_p}\sum\limits_{\gamma\in
 S_a\cap\mc{A}'}h_{\beta}.\]
 Conditions (i), (ii), (iii) are obviously satisfied, while,
  since $R_a=\bigcup\limits_{\gamma\in S_a\cap\mc{A}'}R_{\gamma}$, we get that
 \begin{eqnarray*}
 |f_a(\sum\limits_{k\in R_a}\mu_ky_k)| &= &
 \big| \frac{1}{m_p} \sum\limits_{\gamma\in S_a\cap\mc{A}'}
  f_{\gamma}(\sum\limits_{k\in R_a}\mu_ky_k)         \big|\\
  &\le &  \frac{1}{m_p} \sum\limits_{\gamma\in S_a\cap\mc{A}'}
 |f_{\gamma}(\sum\limits_{k\in R_\gamma}\mu_ky_k)|\\
 &\le &
 \frac{1}{m_p} \sum\limits_{\gamma\in S_a\cap\mc{A}'}
 (g_\gamma+h_\gamma)(\sum\limits_{k\in R_\gamma}\mu_kt_k)\\
 & = & (g_a+h_a)(\sum\limits_{k\in R_a}\mu_kt_k).
 \end{eqnarray*}
  \smallskip
 \noindent
 {\bf Case 2. } $f_a$ is a $2q+1$ special functional for some $q\ge i$.\\
  \smallskip
 \noindent
 Then  $f_a=\e_a
 \frac{1}{m_{2q}}E(\phi_1+\cdots\phi_{l_0}+\psi_{l_0+1}+\cdots+\psi_{n_{2q+1}})$,
 with $\phi_{l_0+1}\neq\psi_{l_0+1}$ (functionals of the form $\phi_r$
 in the expression above may
 appear only if $q=i$; if $q>i$ then $l_0=0$).
 If $q>i$ then $a\not\in\mc{B}'$, hence it has no sense to talk
 about $I_a$. In the case $q=i$
 from the definition of the
 set $\mc{B}_2$ we get that $I_a=\emptyset$.
 Similarly to the proof  concerning $\mc{B}_1$, we obtain that for every
 $\beta\in\mc{B}_2$ with $a\prec\beta$ there exists $l\ge l_0+1$
 such that $\supp f_\beta\subset \supp \psi_l$ and
 \[ |f_a(\sum\limits_{k\in I_\beta}\mu_ky_k)|\le
 \frac{1}{n_{2q+1}^2}\cdot\max\limits_{k\in I_{\beta}} |\mu_k|\cdot \#(I_\beta).\]
 Therefore
 \begin{eqnarray*}
 |f_a(\sum\limits_{k\in R_a}\mu_ky_k)|& \le&
 \sum\limits_{\beta\in\mc{B}_2,\;\beta\succ a}
  |f_a(\sum\limits_{k\in I_{\beta}}\mu_ky_k)| \le
 \frac{1}{n_{2q+1}^2}\cdot\max\limits_{k\in R_{a}}|\mu_k|
 \cdot\sum\limits_{\beta\in\mc{B}_2,\;\beta\succ a} \#(I_\beta) \\
  & \le &  \frac{2p_j}{n_{2q+1}^2}\cdot\max\limits_{k\in R_{a}}|\mu_k|\le
  \frac{1}{n_{2q+1}}\cdot\max\limits_{k\in R_{a}}|\mu_k|.
  \end{eqnarray*}
  We select $k_a\in R_a$ such that $|\mu_{k_a}|=\max\limits_{k\in
  R_a}|\mu_k|$ and we set
  \[g_a=0 \qquad\text{  and }\qquad h_a=\sgn(\mu_{k_a})\cdot\frac{1}{n_{2q+1}}\cdot
 t_{k_a}^*.\]
   \smallskip
 \noindent
 {\bf Case 3. } $f_a$ is a $2q+1$ special functional for some $q< i$.\\
  \smallskip
 \noindent
 Then $f_a$ takes the form
$f_a=\e_a \frac{1}{m_{2q}}E(f_{\gamma_1}+\cdots+f_{\gamma_d})$
with  $d\le n_{2q+1}$.
 Similarly to the proof concerning $\beta\in\mc{B}_1$,
 for every $\beta\in\mc{B}_2$ with $a\prec\beta$ there exists $s$
 such that $\supp f_{\beta}\subset \supp f_{\gamma_s}$, while
 \[ |f_{\gamma_s}(\sum\limits_{k\in
 I_{\beta}}\mu_ky_k)|\le\frac{1}{w(f_{\gamma_s})}\cdot|f_{\beta}(\sum\limits_{k\in
 I_{\beta}}\mu_ky_k)|.
  \]
 Let $s_0$ be such that
 $w(f_{\gamma_{s_0}})<m_{2i+1}<w(f_{\gamma_{s_0+1}})$.
 From the definition of the special sequences and the coding function $\sigma$, we get that
 \[ \#\bigg( \bigcup\limits_{s=1}^{s_0-1}\ran f_{\gamma_s}   \bigg)
  \le \max\supp f_{\gamma_{s_0-1}}<w(f_{\gamma_{s_0}})<m_{2i+1}.\]
 Since for each $k$ we have that $\#\supp y_k\ge M>m_{2i+1}$, it
 follows that for every $s<s_0$ there is no $\beta\in\mc{B}_2$
 such that $\supp f_{\beta}\subset \supp f_{\gamma_s}$.

 If $s>s_0$ and $\beta\in\mc{B}_2$ are such that $\supp
 f_\beta\subset \supp f_{\gamma_s}$ then
 \[ |f_{\gamma_s}(\sum\limits_{k\in
 I_{\beta}}\mu_ky_k)|\le\frac{1}{w(f_{\gamma_s})}|f_{\beta}(\sum\limits_{k\in
 I_{\beta}}\mu_ky_k)|\le
 \frac{1}{m_{2i+2}}\cdot\max\limits_{k\in
 I_\beta}|\mu_k|\cdot\#(I_{\beta}).
 \]
 We select $k_a\in \bigcup\limits_{s>s_0}R_{\gamma_s}$ such that
 $|\mu_{k_a}|=\max\{|\mu_k|:\;k\in
 \bigcup\limits_{s>s_0}R_{\gamma_s}\}$.

 If there is no $\beta\in\mc{B}_2$ such that
 $\gamma_{s_0}\prec\beta$ then we set
 \[  g_a=0 \qquad\text{ and }\qquad h_a=\sgn(\mu_{k_a})\cdot\frac{1}{m_{2i+1}}\cdot t_{k_a}^*.\]
  If there exists $\beta\in\mc{B}_2$ such that
 $\gamma_{s_0}\prec\beta$ then the functionals $g_{s_0}$ and
 $h_{s_0}$ have been defined in the previous inductive step. We
 set
 \[  g_a=g_{s_0} \qquad\text{ and }\qquad h_a=h_{s_0}+\sgn(\mu_{k_a})
              \cdot\frac{1}{m_{2i+1}}\cdot t_{k_a}^*.\]
 Conditions (i), (ii), (iii) are easily established; we shall show condition
 (iv). We assume that  there exists $\beta\in\mc{B}_2$ such that
 $\gamma_{s_0}\prec\beta$ (the modifications are obvious is no
 such $\beta$ exists).
 \begin{eqnarray*}
    |f_a(\sum\limits_{k\in R_a}\mu_ky_k)|&\le  &
    |f_{\gamma_{s_0}}(\sum\limits_{k\in
    R_{\gamma_{s_0}}}\mu_ky_k)|
    +\sum\limits_{s>s_0}|f_{\gamma_s}(\sum\limits_{k\in R_{\gamma_s}}\mu_ky_k) | \\
    & \le &(g_{s_0}+h_{s_0})(\sum\limits_{k\in
    R_{\gamma_{s_0}}}\mu_k t_k)  \\
    && \qquad\qquad\quad + \frac{1}{m_{2i+2}}\cdot\max\limits_{k\in
     \bigcup\limits_{s>s_0}R_{\gamma_s}}|\mu_k|\cdot\sum\limits_{s>s_0}\#(R_{\gamma_s}) \\
      & \le & g_{s_0}(\sum\limits_{k\in R_{\gamma_{s_0}}}\mu_k t_k)+h_{s_0}(\sum\limits_{k\in
    R_{\gamma_{s_0}}}\mu_k t_k)+
    \frac{1}{m_{2i+1}^2}\cdot |\mu_{k_a}|\cdot 2p_j
    \\
     &\le  & g_{s_0}(\sum\limits_{k\in R_{a}}\mu_k t_k)+h_{s_0}(\sum\limits_{k\in
    R_{a}}\mu_k t_k)\\
    & &\qquad\qquad+\sgn(\mu_{k_a})\cdot\frac{1}{m_{2i+1}} \cdot e_{k_a}^*(\sum\limits_{k\in
    R_{a}}\mu_k t_k) \\
    & =&  (g_a+h_a)(\sum\limits_{k\in R_{a}}\mu_k t_k).
 \end{eqnarray*}
 The inductive construction is complete.

 For the the functionals $g_0,h_0$ corresponding
 to the root $0\in\mc{A}$ of the tree $\mc{A}$,
 noticing that
 $R_0=\bigcup\limits_{\beta\in\mc{B}_2}I_\beta$,
   we get that
 \begin{eqnarray*}
 |f(\sum\limits_{\beta\in\mc{B}_2}\sum\limits_{k\in
 I_\beta}\mu_ky_k)|& = &
 |f(\sum\limits_{k\in R_0}\mu_ky_k)|\le (g_0+h_0)(\sum\limits_{k\in R_0}\mu_k t_k)  \\
 &\le &   g_0(\sum\limits_{k\in R_0}\mu_k t_k)+ \frac{1}{m_{2i+1}}\cdot\max\limits_{k\in
 R_0}|\mu_k|\cdot \#(R_0) \\
 & \le & g_0(\sum\limits_{k=1}^{2p_j}\mu_k
 t_k)+ \frac{2p_j}{m_{2i+1}}\cdot
 \max\limits_{1\le k\le 2p_j}|\mu_k|
 \\
 & \le & \|\sum\limits_{k=1}^{2p_j}\mu_k t_k\|_{J_{T_0}}+\max\limits_{1\le k\le
 2p_j}|\mu_k|\\
 &
 \le & 2 \cdot \|\sum\limits_{k=1}^{2p_j}\mu_k t_k\|_{J_{T_0}}.
 \end{eqnarray*}

 Therefore we get that
 \begin{eqnarray*}
 |f(\sum\limits_{k\in F}\mu_ky_k)|& \le &
 |f(\sum\limits_{\gamma\in\mc{B}_1}\sum\limits_{k\in
 I_\gamma}\mu_ky_k)|+ |f(\sum\limits_{\beta\in\mc{B}_2}\sum\limits_{k\in
 I_\beta}\mu_ky_k)|  \\
 & \le & \|\sum\limits_{k=1}^{2p_j}\mu_k
t_k\|_{J_{T_0}}+2 \cdot \|\sum\limits_{k=1}^{2p_j}\mu_k
t_k\|_{J_{T_0}}=3\cdot \|\sum\limits_{k=1}^{2p_j}\mu_k
t_k\|_{J_{T_0}}
 \end{eqnarray*}
 and this finishes the proof of Claim 1.
 \end{proof}

 Next we shall estimate $|f(\sum\limits_{k\not\in F}\mu_ky_k)|$.
 We clearly may restrict our intention to $k\in D$ where
 \[ D=\big\{k\in\{1,2,\ldots,2p_j\}:\;k\not\in F\text{ and } \supp
 f\cap \supp y_k\neq \emptyset\big\}.  \]
  In order to estimate $f(\sum\limits_{k\in D}\mu_ky_k)$ we shall
  split the vector $y_k$, for each $k\in D$, into two parts, the
  initial part $y_k'$ and the final part $y_k''$. The way of the
  split depends on the specific analysis $(f_a)_{a\in\mc{A}}$ of the
  functional $f$ that we have fixed.

  \begin{definition}\label{Ndef13}
  For $k\in D$ and $a\in\mc{A}$ we say that $f_a$ covers $y_k$ if
  \[\supp(f_a)\cap\supp(y_k)=\supp(f)\cap\supp(y_k).\]
  \end{definition}
 Next we introduce some notation which will be used in the rest of the proof.
 \begin{notation}\label{Nnot1}
   We correspond to each $y_k$, for $k\in D$, two vectors $y_k'$, $y_k''$
  defined as follows.\\
    \smallskip
 \noindent
  {\bf Case 1.}\quad  $\#\bigg(\supp(f)\cap\supp(y_k)\bigg)=1$.\\
   \smallskip
 \noindent
   Then there exists a unique maximal node $a_k\in\mc{A}$ such
  $f_{a_k}=e_{l_k}^*$ covers $y_k$. In this case we set $y_k'=y_k$ and $y_k''=0$.\\
   \smallskip
 \noindent
  {\bf Case 2.}\quad  $\#\bigg(\supp(f)\cap\supp(y_k)\bigg)\ge 2$.\\
   \smallskip
 \noindent
   Then there exists a unique node $a_k\in\mc{A}$ such that
  $f_{a_k}$ covers $y_k$ but for every $\beta\in S_{a_k}$,
 $f_{\beta}$ does not cover $y_k$. Let
  \[  \{\beta\in  S_a:\;
  \supp(f_\beta)\cap\supp(y_k)\neq\emptyset\}=\{\beta_1,\beta_2,\ldots,\beta_d\}
  \]
  with $f_{\beta_1}<f_{\beta_2}<\cdots<f_{\beta_d}$.
  We set
  \[ y_k'=y_k|_{[1,\max\supp f_{\beta_1}]}\qquad\text{ and }\qquad
  y_k''=y_k-y_k'.\]
  \end{notation}

  \begin{remark}\label{Nrem7}
    The estimates given in Lemma \ref{Nlem10} for the vectors $y_k$,
    $1\le k\le 2p_j$, remain valid if we replace, for each
   $k\in D$, the  vector   $y_k$  by either  the  vector $y_k'$ or
   by the vector $y_k''$.
  \end{remark}
  The analogue of Definition \ref{Ndef13},
  concerning the vectors $y_k'$, $y_k''$   is the following.
  \begin{definition}\label{Ndef15}
  For $k\in D$ and $a\in\mc{A}$ we say that $f_a$ covers $y_k'$ if
  $\supp(f_a)\cap\supp(y_k')=\supp(f)\cap\supp(y_k')$  while we
  say that $f_a$ covers $y_k''$ if
  $\supp(f_a)\cap\supp(y_k'')=\supp(f)\cap\supp(y_k'')$.
  \end{definition}

 The property of the sequences $(y_k')_{k\in D}$ and
 $(y_k'')_{k\in  D}$ which will play a key role in our proof is
 described in  the following remark.

 \begin{remark}\label{Nrem6}
 \begin{enumerate}
 \item[(i)]  Suppose that $k\in D$ and $a\in\mc{A}$ is a non maximal node
  such that $f_a$ covers $y_k'$ but for every $\beta\in S_a$,
  $f_{\beta}$ does not cover $y_k'$. Then there exists a node
  $\beta_k\in S_a$ (not necessarily unique) such that
  \[ \supp(f_{\beta_k})\cap \supp(y_k')\neq \emptyset  \]
  and
  \[   \supp(f_{\beta_k})\cap \supp(y_l')= \emptyset
  \text{\;\; for all } l\in D \text{\ with }l\neq k.\]
 \item[(ii)] The statement of (i) remains valid if we replace the
  sequence $(y_l')_{l\in D}$ with the sequence $(y_l'')_{l\in D}$.
 \end{enumerate}
   \end{remark}

 \begin{claim2} We have that
 \begin{enumerate}
 \item[(a)]\quad
 $|f(\sum\limits_{k\in D}\mu_ky_k')|\le
 73\cdot \|\sum\limits_{k=1}^{2p_j}\mu_kt_k\|_{J_{T_0}}.$
  \item[(b)] \quad
 $|f(\sum\limits_{k\in D}\mu_ky_k'')|\le
 73\cdot \|\sum\limits_{k=1}^{2p_j}\mu_kt_k\|_{J_{T_0}}.$
 \end{enumerate}
 \end{claim2}
 \begin{proof}[\bf Proof of Claim 2.]
 We shall only show (a). The proof of (b) is almost identical; only minor
 modifications are required.

 For each $a\in \mc{A}$ we set
 \[   D_a=\{k\in D:\; f_a \text{ covers }y_k'\}.  \]
 Setting $\mc{A}'=\{a\in \mc{A}:\; D_a\neq\emptyset\}$, we observe
 that $\mc{A}'$ is a complete subtree of the tree $\mc{A}$.
 We shall construct two  families of functionals
 $(g_a)_{a\in\mc{A}'}$ and $(h_a)_{a\in\mc{A}'}$ such that the
 following conditions are satisfied for every $a\in\mc{A}'$.
 \begin{enumerate}
 \item[(i)] $\supp g_a\subset D_a$ and  $\supp h_a\subset D_a$,
  while $\supp g_a\cap\supp h_a=\emptyset$.
 \item[(ii)] $g_a\in D_0$ and $\|h_a\|_{\infty}\le\frac{1}{m_{2i-1}}$.
 \item[(iii)] $g_a(\sum\limits_{k\in D_a}\mu_kt_k)\ge 0$ and
 $h_a(\sum\limits_{k\in D_a}\mu_kt_k)\ge 0$.
 \item[(iv)]
  $|f(\sum\limits_{k\in D_a}\mu_ky_k')|\le
 (72g_a+h_a)(\sum\limits_{k\in D_a}\mu_kt_k)$.
 \end{enumerate}

 For $a\in\mc{A}'$ which is  non maximal in
 $\mc{A}'$, we set $S_a'=S_a\cap \mc{A}'=\{\beta\in
 S_a:\;D_\beta\neq\emptyset\}$. Observe for later use, that the sets
 $(D_{\beta})_{\beta\in S_a'}$ are successive and pairwise
 disjoint.

 The construction of $(g_a)_{a\in\mc{A}'}$ and $(h_a)_{a\in\mc{A}'}$
 is inductive. Let $a\in\mc{A}'$ and suppose that for every
 $\beta\in \mc{A}'$, $\beta\succ a$ the functionals $g_{\beta}$, $h_{\beta}$ have
 been defined satisfying conditions (i), (ii), (iii), (iv).
 We distinguish the following cases.\\
   \smallskip
 \noindent
  {\bf Case 1.}\quad  $a$ is
 a maximal node of the tree $\mc{A}$. \\
   \smallskip
 \noindent Then $f_a$ is of the form $f_a=e_{l_a}^*$, while
 the set $D_a$ is a singleton,   $D_a=\{k_a\}$. We set
 $g_a=\sgn(\mu_{k_a})\cdot
  t_{k_a}^*$ and $h_a=0$.
  Conditions (i), (ii), (iii) are obvious, while
  from   Remark   \ref{Nrem7} and Lemma \ref{Nlem10} we get that
 \[  |f_a(\sum\limits_{k\in
 D_a}\mu_ky_k')|
 =|\mu_{k_a}|\cdot|f_a(y_{k_a}')|
 \le |\mu_{k_a}| \cdot\|y_{k_a}'\|
 \le 36 \cdot |\mu_{k_a}|\le (72g_a+h_a)(
 \sum\limits_{k\in
 D_a}\mu_kt_k). \]
    \smallskip
 \noindent
  {\bf Case 2.}\quad  $w(f_a)\ge m_{2i+1}$. \\
   \smallskip
 \noindent Then
 from   Remark   \ref{Nrem7} and Lemma \ref{Nlem10} we get that
 $|f_a(y_k')|\le \frac{19}{m_{2i}}$ for every $k\in D_a$, thus,
 taking into account that $\#(D_a)\le 2p_j$ and that from our choice
 of $i$, $38p_j<m_{2i-1}$,
  it follows that
 \[ |f_a(\sum\limits_{k\in D_a}\mu_ky_k')|\le
 \max\limits_{k\in D_a}|\mu_k|\cdot \frac{19\cdot\#(D_a)}{m_{2i}}
 \le \max\limits_{k\in D_a}|\mu_k|\cdot \frac{1}{m_{2i-1}}.\]
  We select $k_a\in D_a$ with $|\mu_{k_a}|=\max\limits_{k\in
 D_a}|\mu_k|$ and we set $g_a=0$ and $h_a=\sgn(\mu_{k_a})\cdot\frac{1}{m_{2i-1}}\cdot
 e_{k_a}^*$.\\
     \smallskip
 \noindent
  {\bf Case 3.}\quad  $f_a$ is the result of an $(\mc{A}_{n_p},\frac{1}{m_p})$
  operation for some $p\le 2i$. \\
   \smallskip
 \noindent Let $f_a=\frac{1}{m_p}\sum\limits_{\beta\in
 S_a}f_\beta$ with $\#S_a\le n_p$. We set
 $T_a=D_a\setminus\bigcup\limits_{\beta\in S_a'}D_{\beta}$.

 From Remark \ref{Nrem6}, for each $k\in T_a$ there exists
 $\beta_k\in S_a$ such that
 $\supp(f_{\beta_k})\cap \supp(y_k')\neq \emptyset$
  and
 $\supp(f_{\beta_k})\cap \supp(y_l')= \emptyset$
 for every $l\in D$, $l\neq k$. This implies that $\beta_k\in
 S_a\setminus S_a'$.
 Since clearly the correspondence
 \begin{eqnarray*}
   T_a & \longrightarrow & S_a\setminus S_a'\\
    k  & \longmapsto   & \beta_k
 \end{eqnarray*}
 is one to one,
 it follows that $\#T_a+\#S_a'\le \#S_a\le n_p$.
 We set
 \[ g_a=\frac{1}{m_p}\big(\sum\limits_{\beta\in S_a'}g_{\beta}+\sum\limits_{k\in
 T_a}\sgn(\mu_k)t_k^*\big)\quad\mbox{ and }\quad
  h_a=\frac{1}{m_p}\sum\limits_{\beta\in S_a'}h_{\beta}.\]
  From our last observation and the inductive assumptions
  it follows that $g_a\in D_0$, while, again from our inductive
  assumptions, we have that
  $\|h_a\|_{\infty}\le \frac{1}{m_{2i-1}}$
  and $g_a(\sum\limits_{k\in D_a}\mu_kt_k)\ge 0$,
 $h_a(\sum\limits_{k\in D_a}\mu_kt_k)\ge 0$.

 For every $k\in T_a$, Remark   \ref{Nrem7} and Lemma \ref{Nlem10}
 yield that $|f_a(y_k')|\le \frac{54}{m_p}\le\frac{72}{m_p}$. Therefore
 \begin{eqnarray*}
    |f_a(\sum\limits_{k\in D_a}\mu_ky_k')| &\le  &
    \sum\limits_{\beta\in S_a}\frac{1}{m_p}|f_{\beta}(\sum\limits_{k\in D_\beta}\mu_ky_k')|
     +\sum\limits_{k\in T_a}|f_a(\mu_ky_k')| \\
  & \le&    \sum\limits_{\beta\in S_a'}
  \frac{1}{m_p}(72g_\beta+h_\beta)(\sum\limits_{k\in D_{\beta}}\mu_kt_k)
   + \sum\limits_{k\in T_a}|\mu_k|\cdot \frac{72}{m_p}   \\
  & =&    (72 g_a+h_a)(\sum\limits_{k\in D_a} \mu_k t_k).
  \end{eqnarray*}
     \smallskip
 \noindent
  {\bf Case 4.}\quad  $f_a$ is a $2i+1$ special functional. \\
   \smallskip
 \noindent Let  $f_a=\e_a
 \frac{1}{m_{2i}}E(\phi_1+\cdots\phi_{l_0}+\psi_{l_0+1}+\cdots+\psi_{d})$,
 where  $\phi_{l_0+1}\neq\psi_{l_0+1}$, $d\le n_{2i+1}$ and $\max E=\max\supp \psi_d$.
  From the definition of the
 sets $F=\bigcup\limits_{\beta\in \mc{B}}I_\beta$ and
 $D=\{k:\;k\not\in F\mbox{ and }\supp
 (f)\cap\supp(y_k)\neq\emptyset\}$ we get that the set
 \[R=\{k\in D_a:\; f_a\mbox{ covers }y_k'\mbox{ and } \supp
 E(\phi_1+\cdots+\phi_{l_0})\cap \supp (y_k')\neq\emptyset\} \]
 contains at most two elements (i.e. $\#R\le 2$).
  We set \[ g_a=\frac{1}{2}\sum\limits_{k\in R}\sgn(\mu_k)t_k^* .\]
  We observe that $|f_a(\sum\limits_{k\in R}\mu_ky_k')|\le
  36\sum\limits_{k\in R}|\mu_k|=72 g_a(\sum\limits_{k\in
  R}\mu_kt_k)$.

  Since $y_k=\frac{m_{2i}}{M}\sum\limits_{r=(k-1)M+1}^{kM}x_r$, the
  vector $y_k'$ takes the form\\
  $y_k'=\frac{m_{2i}}{M}(x_{(k-1)M+1}+\cdots+x_{s-1}+x_s')$ for
  some $s\le kM$ where $x_s'$ is of the form $x_s'=[\min\supp
  x_s,m]x_s$.

  Let $k\in D_a\setminus R$. In order to give an upper
   estimate of the action of $f_a$ on $y_k'$,
   we may assume, without loss of generality,
    that $x_s'=x_s$. Since
   $(x_r,\phi_r)_{r=1}^{n_{2i+1}}$ is a $(6,2i+1)$ dependent
   sequence we have that $w(\psi_l)\neq w(\phi_r)$ for all pairs $(l,r)$
 with $(l,r)\neq (l_0+1,l_0+1)$,
 while $|\psi_{l_0+1}(x_{l_0+1})|\le \|x_{l_0+1}\|\le 6$.
 It follows that
    \begin{eqnarray*}
     |f_a(y_k')| & \le &
     \frac{m_{2i}}{M}\big|\big(\sum\limits_{l=l_0+1}^d\psi_l\big)(\sum\limits_{r=(k-1)M+1}^sx_r)\big|\\
     &\le&\frac{m_{2i}}{M}\sum\limits_{r=(k-1)M+1}^s\sum\limits_{l=l_0+1}^d|\psi_l(x_r)|\\
   & \le & \frac{m_{2i}}{M}\bigg( 6+\sum\limits_{r=(k-1)M+1}^s
   \big(   \sum\limits_{w(\psi_l)<w(\phi_r)}\frac{18}{w(\psi_l)}+
   \sum\limits_{w(\psi_l)>w(\phi_r)}\frac{6}{w(\phi_r)^2}\big)\bigg)\\
  & \le &  \frac{1}{m_{2i+1}}.
  \end{eqnarray*}

 Thus
  \[|f_a(\sum\limits_{k\in D_a\setminus  R}\mu_ky_k')|\le
  \max\limits_{k\in D_a\setminus R}|\mu_k|\cdot
  2p_j\cdot\frac{1}{m_{2i+1}} \le   \max\limits_{k\in D_a\setminus
  R}|\mu_k|\cdot\frac{1}{m_{2i}}.
   \]
 We select $k_a\in D_a\setminus R$ such that
 $|\mu_{k_a}|=\max\limits_{k\in D_a\setminus R}|\mu_k|$ and we set
 $h_a=\sgn(\mu_{k_a})\cdot\frac{1}{m_{2i}}\cdot t_{k_a}^*$.

   We easily get that
    \[
    |f_a(\sum\limits_{k\in D_a}\mu_ky_k')| \le   (72g_a+h_a)(\sum\limits_{k\in
    D_a}\mu_kt_k)\]
  while  inductive assumptions (i), (ii), (iii) are also satisfied
  for the functionals $g_a$, $h_a$.\\
      \smallskip
 \noindent
  {\bf Case 5.}\quad  $f_a$ is a $2q+1$ special functional for some $q<i$. \\
   \smallskip
 \noindent Let   $f_a=\e_a
 \frac{1}{m_{2q}}(f_{\beta_1}+f_{\beta_2}+\cdots+f_{\beta_d})$,
 where  $d\le n_{2q+1}(\le n_{2i-1})$. We set
  \[l_0=\min\{l:\; \max\supp f_{\beta_l}\ge \min\supp y_1' \}.\]
  Then using our assumption that $\min\supp x_1>m_{2i+1}$ (see the choice
  of the dependent sequence $(x_r,\phi_r)_{r=1}^{n_{2i+1}}$ in the beginning
  of the present section), the
  fact that $\min\supp y_1'=\min\supp x_1$ and the definition
  of the special sequences, we get that
  \[  m_{2i+1}<\min\supp y_1' \le \max\supp
  f_{\beta_{l_0}}<w(f_{\beta_{l_0+1}}).\]
 Thus for every $k$, using Lemma \ref{Nlem10}(ii) and Remark \ref{Nrem7}, we get that
  \begin{eqnarray*}
   \sum\limits_{l=l_0+1}^{d}|f_{\beta_l}(y_k')|&\le
     &\sum\limits_{l=l_0+1}^{d}\big(\frac{18m_{2i}}{w(f_{\beta_l})}+\frac{36m_{2i}}{M}\big) \\
  & \le &18m_{2i}\cdot\frac{2}{m_{2i+2}}+n_{2i-1}\cdot \frac{36m_{2i}}{M} \\
  & \le& \frac{2}{m_{2i}}.
  \end{eqnarray*}
  This yields that $\sum\limits_{k\in
  D_a}\sum\limits_{l=l_0+1}^{d}|f_{\beta_l}(y_k')|\le 2p_j\cdot \frac{2}{m_{2i}}\le
  \frac{1}{m_{2i-1}}$.

 We observe that there exists at most one $k_0\in D_a\setminus D_{\beta_{l_0}}$
 such that $\supp f_{\beta_{l_0}}\cap \supp y_{k_0}'\neq
 \emptyset$. Without
 loss of generality, we assume that such a $k_0$ exists.
  We set \[g_a=\frac{1}{2}(g_{\beta_{l_0}}+e_{k_0}^*).\]
 We select $k_a\in D_a\setminus (D_{\beta_{l_0}}\cup\{k_0\})$ such
 that $|\mu_{k_a}|=\max\{|\mu_k|:\;k\in D_a\setminus
 (D_{\beta_{l_0}}\cup\{k_0\})\}$ and we set
 \[ h_a=\frac{1}{m_{2q}}h_{\beta_{l_0}}+ \sgn(\mu_{k_a})\cdot
 \frac{1}{m_{2i-1}}\cdot t_{k_a}^*.\]

  Then conditions (i), (ii),(iii) are obviously satisfied, while
 \begin{eqnarray*}
   |f_a(\sum\limits_{k\in D_a}\mu_ky_k')| &\le &
   \frac{1}{m_{2q}}\bigg(\big|f_{\beta_{l_0}}(\sum\limits_{k\in
   D_{\beta_{l_0}}}\mu_ky_k')\big|+|f_{\beta_{l_0}}(\mu_{k_0}y_{k_0}')|\\
   &  &   \qquad\qquad+\sum\limits_{k\in
   D_a}\sum\limits_{l=l_0+1}^{d}|f_{\beta_l}(\mu_ky_k')|\bigg)\\
      &\le  & \frac{1}{m_{2q}}\bigg( 72\cdot g_{\beta_{l_0}}(\sum\limits_{k\in
   D_{\beta_{l_0}}}\mu_kt_k)+h_{\beta_{l_0}}(\sum\limits_{k\in
   D_{\beta_{l_0}}}\mu_kt_k)\\
   & & \qquad\qquad+36|\mu_{k_0}|+\frac{1}{m_{2i-1}}\bigg)\\
   & \le & (72g_a+h_a)(\sum\limits_{k\in D_a}\mu_kt_k).
  \end{eqnarray*}
 The inductive construction is complete.

 For the  functionals $g_0,h_0$ corresponding
 to the root $0\in\mc{A}$ of the tree $\mc{A}$, and taking into account that $D_0=D$,
   we get that
  \begin{eqnarray*}
 |f(\sum\limits_{k\in D}\mu_ky_k')|& \le &
 72\cdot g_0(\sum\limits_{k\in D_0}\mu_kt_k)+ h_0 (\sum\limits_{k\in
 D_0}\mu_kt_k)\\
  &\le & 72\cdot
  g_0(\sum\limits_{k=1}^{2p_j}\mu_kt_k)+\max_{k}|\mu_k|\cdot
  2p_j \cdot\frac{1}{m_{2i-1}}  \\
  & \le &72 \cdot\|\sum\limits_{k=1}^{2p_j}\mu_kt_k\|_{J_{T_0}}+
 \max_k |\mu_k|\\
 &\le & 73\cdot \|\sum\limits_{k=1}^{2p_j}\mu_kt_k\|_{J_{T_0}}.
 \end{eqnarray*}
 The proof of the claim is complete.
 \end{proof}

 From Claim 1 and Claim 2 we conclude that
   \begin{eqnarray*}
   |f(\sum\limits_{k=1}^{2p_j}\mu_ky_k)|& \le &
   |f(\sum\limits_{k\in F}\mu_ky_k)|+|f(\sum\limits_{k\in
   D}\mu_ky_k')|+|f(\sum\limits_{k\in D}\mu_ky_k'')|\\
   & \le & 3\cdot\|\sum\limits_{k=1}^{2p_j}\mu_kt_k\|_{J_{T_0}}+
    73 \cdot\|\sum\limits_{k=1}^{2p_j}\mu_kt_k\|_{J_{T_0}}
    +73\cdot\|\sum\limits_{k=1}^{2p_j}\mu_kt_k\|_{J_{T_0}}\\
  &\le & 150\cdot \|\sum\limits_{k=1}^{2p_j}\mu_kt_k\|_{J_{T_0}}.
   \end{eqnarray*}
 This completes the proof of the right side inequality of \eqref{Neq1}
 and also the proof of Theorem \ref{Nth2}.
 \end{proof}

 \end{document}